\documentclass {amsart}
 
\usepackage{amsthm,amsmath,amssymb,verbatim} 
\usepackage[all]{xy}
\UseComputerModernTips 

\theoremstyle{plain} 
	\newtheorem{thm}{Theorem}[section] 
	\newtheorem{prop}[thm]{Proposition} 
	\newtheorem{lemma}[thm]{Lemma}  
	\newtheorem{cor}[thm]{Corollary} 
 
\theoremstyle{definition} 
	\newtheorem{defn}[thm]{Definition} 
	 
	\newtheorem{remark}[thm]{Remark} 
	\newtheorem{examples}[thm]{Example}

\newcommand{\catc}{\mathcal C}
\newcommand{\catd}{\mathcal D}

\newcommand{\catm}{\mathfrak M}

\newcommand{\catn}[1][n]{[#1]}

\newcommand{\sset}{\mathcal S}

\newcommand{\caln}{\mathcal N}
\newcommand{\catu}{\mathcal U}
\newcommand{\catv}{\mathcal V}
\newcommand{\catubar}{\overline{\catu}}

\DeclareMathOperator{\cat}{Cat}

\newcommand{\lofx}[2][\alpha]{\mathcal L_{#1}\left(#2 \right)}
\newcommand{\alofx}[2][\alpha]{L_{#1}\left(#2 \right)}

\newcommand{\blofx}[2][\alpha]{\delta_{#1}\left(#2 \right)}
\newcommand{\latch}[2][\alpha]{\eta_{#1}\left(#2 \right)}
\newcommand{\mofx}[2][\alpha]{\mathcal M_{#1}\left(#2 \right)}
\newcommand{\amofx}[2][\alpha]{M_{#1}\left(#2 \right)}
\newcommand{\altmofx}[2][\alpha]{M_{#1}^{\catc_0}\left(#2 \right)}
\newcommand{\bmofx}[2][\alpha]{\sigma_{#1}\left(#2 \right)}
\newcommand{\match}[2][\alpha]{\mu_{#1}\left(#2 \right)}

\newcommand{\fnc}[1][n]{F^{#1} \catc}
\newcommand{\latchcat}[1][\alpha]{\text{Latch}(\alpha)}
\newcommand{\matchcat}[1][\alpha]{\text{Match}(\alpha)}
\newcommand{\lefti}[1][i]{\mathbb L_{#1}}
\newcommand{\righti}[1][i]{\mathbb R_{#1}}

\DeclareMathOperator{\map}{map}

\DeclareMathOperator{\colim}{colim}

\newcommand{\leftmod}[1][]{\textbf{Left}(\catc_0,\catm_{#1}^\catc)}
\newcommand{\rightmod}[1][\catm]{\textbf{Right}(\catc_0,{#1}^\catc)}
\newcommand{\leftmodup}[1][]{\textbf{Left}(\catc_0,\catm^{\catc_{#1}})}
\newcommand{\leftmodc}[2][]{\textbf{Left}(\catc_{#2},\catm_{#1}^\catc)}
\newcommand{\rightmodc}[2][]{\textbf{Right}(\catc_{#2},\catm_{#1}^\catc)}

\newcommand{\diagramit}[2][]{\begin{equation}{#1}\xymatrix{#2}\end{equation}}

\begin{document} 
 
\title	[On modified Reedy/projective structures] 
	{On modified Reedy and modified projective model structures} 
 

\author [M. W. Johnson]
        {Mark W. Johnson}
\address{Department of Mathematics\\
         Penn State Altoona\\
         Altoona, PA 16601-3760}
\email{mwj3@psu.edu}

\begin{abstract}
Variations on the notions of Reedy model structures and projective model structures
on categories of diagrams in a model category are introduced.  
These allow one to choose only a subset of the entries
when defining weak equivalences, or to use different model categories at different entries 
of the diagrams.  As a result, a bisimplicial model category that can be used to recover 
the algebraic K-theory for any Waldhausen subcategory of a model category is produced.
\end{abstract}


\date{\today}

\maketitle

\section{Introduction}

Reedy model structures form the primary means of building model structures on categories 
of diagrams by imposing additional conditions on the indexing category rather than on the 
target model category.  The traditional goal is to build a model structure where weak 
equivalences of diagrams are defined by the collection of evaluation functors, such as the 
projective or standard Reedy structure.  The current article provides a method for constructing 
modified Reedy structures (Thm \ref{modReedy}), and modified projective structures 
(Prop \ref{modproj}), where weak equivalences are defined by only a subset of the evaluation 
functors.  For additional flexibility, one can also
consider different model structures at various points in the diagram in both of these results. 

After constructing and studying modified Reedy structures,
modified projective structures are introduced in order to extend the well-known Quillen 
equivalence between the standard Reedy and projective structures 
(Thm. \ref{samehtpy}).  As one might expect,
the homotopy theory of these modified structures is determined by that of the diagrams 
indexed on the full subcategory associated to the chosen subset of objects 
(Prop. \ref{restrict}).  As the
technical conditions for the existence of these model structures on diagrams are different,
one suggestion would be viewing them as different means of producing a model 
for the intended homotopy
theory.  By choosing appropriate subsets of objects, a variety of (co)localizations of the standard 
Reedy structure are produced in these left (or right) modified Reedy structures, which again is
somewhat surprising because there are no technical restrictions on the target model category. 
Among other things, two (sometimes three) model structures are given on the simplicial 
objects $s\catm$ which are each Quillen equivalent to $\catm$ itself 
(Cor. \ref{simpsame}).  Thus the localization of the standard Reedy structure on 
simplicial objects in $\catm$ considered in \cite{RSS} is Quillen equivalent to, but in
general different from, the localization of that same standard Reedy structure constructed here 
(Rem.  \ref{notexternal} and \ref{notRSS}), which is a bit surprising.
If one considers the Str{\o}m structure \cite{Strom} on topological spaces,
the model structures on the category of simplicial spaces constructed in this way seem to be 
new, with nice connections to classical homotopy theory.

Various technical properties of all of these structures are also 
considered.  Among the more obvious of these are the inheritance of various standard 
conditions, such as being proper, simplicial, or in a very special case cofibrantly generated.  
It is also shown that strong Quillen pairs, or Quillen equivalences, on the target model 
category prolong to the same in these structures.
In addition, some quite technical refinements allow a cleaner presentation of some of the 
standard arguments, most notably Prop. \ref{liftbylatch}, which applies in general
weak factorization systems.  A purely categorical observation
(Lemma \ref{simpadjt}), that for categories with zero object
the simplicial structure maps in a categorical
nerve all come in adjoint pairs $(s_i,d_{i+1})$ and $(d_i,s_i)$ for $0 \leq i \leq n-1$ along
with new pairs $(\overline s_{-1},d_0)$ and $(d_n,\overline s_{n})$, 
is another such technical improvement that seems not to be well-known.

One motivation for this work is the ability to enrich Thomason's approach
to Waldhausen's algebraic K-theory construction for a model category.  The result 
(Thm. \ref{bisimp}) is a bisimplicial object in model categories (so every structure map is
a strong left and right Quillen functor)
 such that applying an `evaluation functor' for a small full subcategory
produces the bisimplicial set for the algebraic K-theory of any 
Waldhausen subcategory (or even small subcategory of cofibrant objects).  This fits in
with the approach of \cite{DunM}, where the additional structure from an 
enrichment provided a formal approach to the
trace map, among other things.  One long term hope here 
would be to understand more of the machinery of algebraic K-theory within the broader
context of model categories, a question to be pursued in future 
joint work with Wojciech Dorabia{\l}a.  Given the large number of people using model categories
right now, it also seems likely that simplifying and generalizing two of the primary
approaches to constructing model category
structures would lead to improved technical situations in related areas, such as
higher categories, as well.

From another viewpoint, there is a reasonable amount of newly found freedom from 
the ability to consider
different model structures at different entries in a diagram, although certain relationships
between such structures are necessary at different points in this article.  
For example, given a model structure 
on $\catm$, the current techniques produce four distinct model structures on $\catm(\to)$, 
two of which are Quillen equivalent to the original $\catm$.  
If $\catm$ also has a (Bousfield) localization $\catm_f$, this leads
to an additional five distinct model structures on $\catm(\to)$, four as above starting
from $\catm_f$ and
the final one, here called a mixed structure, seems to be completely new.
This mixed structure relates well to considering just $\catm$ at the source, or $\catm_f$ 
at the target, so could be used to study localizations in a very structural way.
For example, the localization map $X \to L_f X$ is a fibrant replacement 
in this mixed structure on $\catm(\to)$ for the identity map of any 
fibrant $X \in \catm$.  This and another related example of commutative squares 
are discussed in more detail in the brief final section.  

Several other authors have recently considered extensions of Reedy's original theory, notably 
\cite{BM}, \cite{Ang}, and \cite{Bar}, in addition to two complete accounts of Reedy structures
in \cite{Phil} and \cite{GJ}.  It might be interesting to consider how to construct modified 
versions of the newer results, presumably guided by Prop. \ref{restrict}.

\subsection{Organization}

The point of section \ref{defs} is introducing the standard definitions for Reedy techniques
and one new one related to our choice of a subset of the objects.  
Appendix \ref{inductsec}
is also included to give full details on the Reedy inductive framework, since
Prop. \ref{liftbylatch} goes a bit beyond the standard claims and these details have often
been omitted in the literature.
Section \ref{buildsec} then provides the construction of left modified
Reedy structures, and the various inheritance properties 
of these are established in section \ref{inherit}.  
As expected
from the standard case, it is the `entrywise' or `internal' simplicial structure 
(even when $\catc=\Delta^{op}$) which inherits compatibility with the model structures, while
the `external' structure constructed by Quillen for simplicial objects usually does not.  
For anyone wishing to work with right modified structures, precise definitions and
statements are given (without proof) in section 5.
Section 6 is devoted to the existence and properties of modified projective structures and
various Quillen equivalence results.  Section 7 details how the current theory relates to
Waldhausen's algebraic K-theory machine and 
Thomason's variation thereof.  
Finally, section 8 provides a bit more detail about arrow categories and a related discussion
of square diagrams, and 
provides an indication of how to generalize the usual localization square of classical
homotopy theory to general model categories. 

\subsection{Acknowledgements}
I would like to thank the Math Department of Wayne State University 
for a stimulating visit to speak about 
this material.  In particular, I thank Dan Isaksen for suggesting the possibility of working 
with different model structures at various points in the diagram, after I stated 
Prop. \ref{liftbylatch}.  
Thanks are also due to John Klein for suggesting I look at Thomason's $T_\bullet$ 
construction, once I outlined my (more complicated) approach to Waldhausen's $S_\bullet$ construction.  An anonymous referee has also made a number of suggestions which 
improved the presentation.  Finally, I would like to thank my colleagues in the Penn State
Topology/Geometry Seminar for enduring my abstractions.

\section{The Reedy Structure}
\label{defs}

This section is intended to introduce mostly standard definitions, together with one new 
condition related to the choice of a full subcategory.  First is a bit of motivation for the
ideas behind Reedy indexing categories.

Suppose $i:\catd \to \catc$ is a functor between small categories, and $\catm$ is any category
with all small (co)limits.  Then $i^*$ induces a (precomposition) functor $\catm^{\catc} \to \catm^{\catd}$, which has both a left adjoint $\lefti$ and a right adjoint $\righti$ by the Kan
extension formulae.  In particular, there are units of adjunction $\lefti i^*X \to X$ and
$X \to \righti i^*X$ for each $X:\catc \to \catm$. 

The following may seem overly specialized, but instances of both types
will be created for each object in a Reedy category.
When $i$ is the inclusion of a full subcategory
missing only the final object $\delta$ in the directed $\catc$, 
the unit of adjunction $\lefti i^*X \to X$ is the identity other than
at the final object, where it is the key entry $\colim_{\catd} i^* X \to X_\delta$.  Similarly,
when $i$ is the inclusion of a full subcategory of the directed $\catc$ missing only 
the initial object, $X \to \righti i^*X$ is the identity other than at the initial object $\delta$, 
where it is the key entry $X_\delta \to \lim_{\catd} i^* X$.  Thus, in these two particular 
instances the units of adjunction instead degenerate to single maps in $\catm$.

Next is the definition of a Reedy indexing category, which is more general than a directed
category but still allows a certain form of induction as described below and in the appendix.

\begin{defn}
	\label{basicreedy}
	A Reedy category is a small category $\catc$ together with a whole number valued
	degree function on objects,
	and two subcategories, each containing all objects, $\catc^{+}$ and $\catc^{-}$ 
	such that each non-identity morphism of $\catc^{+}$ (resp. $\catc^{-}$) raises
	(resp. lowers) degree and each morphism in $\catc$ has a unique
	factorization $f=g p$ where $p \in \catc^{-}$ and $g \in \catc^{+}$.
\end{defn}

An important bit of notation is that $\fnc$ indicates the full subcategory of
$\catc$ whose objects have degree less than or equal to $n$.  
Notice this will always inherit a Reedy category structure from $\catc$ itself, since the
indicated factorizations pass through an object of degree lower than that of the source or target.

\subsection{Latching and Matching Constructions}

One can now define certain subcategories of a Reedy category, which should be thought
of as allowing attention to focus at a certain object, often acting as if it were the final object
of a directed category.

\begin{defn}
	\label{latchcat}
	Given $\alpha \in \catc$, define the \textit{latching category at $\alpha$}, or $\latchcat$, as
	the full subcategory of the restricted overcategory $\catc^{+}/\alpha$ 
	(so objects are maps $\beta \to \alpha$ in $\catc^{+}$ with commutative 
	triangles as morphisms) which does not contain $1_\alpha$.  
\end{defn}

Notice that the restricted overcategory $\catc^{+}/\alpha$ is directed and has $1_\alpha$ 
as final object, and there is an obvious inclusion $i_\alpha:\latchcat \to \catc^{+}/\alpha$.
There is an obvious functor $\catc^{+}/\alpha \to \catc$ given by
sending $\beta \to \alpha$ in $\catc^{+}$ to $\beta$, so any functor $\catc \to \catm$
can be `restricted to $\catc^{+}/\alpha$' by precomposing with this forgetful functor.
	
In particular, given $X:\catc \to \catm$, we restrict $X$ to $\catc^{+}/\alpha$ and then look at 
the key entry (in $\catm$) of the unit of adjunction associated to the 
functor $i_\alpha$ (as above). We will denote this $\alofx{X} \to X_\alpha$ and call it the 
absolute latching map, so the absolute latching space
$\alofx{X} \approx \colim_{\latchcat} i_\alpha^* X$.  

Given a morphism $f:X \to Y$ of such
diagrams in $\catm$, naturality of units of adjunction yields a commutative square
\diagramit{
\alofx{X} \ar[d]_{\alofx{f}} \ar[r] & X_\alpha \ar[d]^{f_\alpha} \\
\alofx{Y} \ar[r] & Y_\alpha  .
}
Next one extends this to a larger commutative diagram
\diagramit{
\alofx{X} \ar[d]_{\alofx{f}} \ar[r] & X_\alpha \ar[d]_{\blofx{f}} \ar[ddr]^{f_\alpha} \\
\alofx{Y} \ar[r] \ar@/_4ex/[rrd] & \lofx{f} \ar@{.>}[dr]_{\latch{f}} \\
& & Y_\alpha  
}
where $\lofx{f}$ denotes the pushout of the upper left portion, so the universal property
yields the dotted arrow and $\blofx{f}$ is part of a factorization of $f_\alpha$ as indicated.  
Here $\lofx{f}$ is the (relative) latching object, and $\latch{f}$
is the (relative) latching map (as distinct from the absolute latching object and absolute latching
map introduced for a single object above).

The primary reason for this structure is to provide an inductive framework for constructing
lifts, which are now defined.

\begin{defn}
	\label{lifting}
\begin{itemize}

\item	 Given a (solid) commutative square, 
\diagramit{
A \ar[d]_{f} \ar[r] & X \ar[d]^{p} \\
B \ar[r] \ar@{.>}[ur]^{k} & Y
}
a (dotted) diagonal $k$ making both triangles 
	commute is called a lift.
	
\item  One says $(f,p)$ have the lifting property if there exists a lift in every (solid) square 
	diagram of this form.  Note this is definitely a property of the \textit{ordered} pair.	

\end{itemize}
\end{defn}

Assume for the moment that $\catm$ is a model category.
One uses the latching constructions to define cofibrations of diagrams by
the (relative) latching maps $\latch{f}$ being cofibrations.  
A key step will be the ability to induct along
degree to show $\blofx{f}$ is then a cofibration, so $f_\alpha$ will also be a 
cofibration in $\catm$.  One can also verify lifting properties between $f$ and $p$
in terms of comparing $\latch{f}$ and a dual construction outlined below, $\match{p}$.
(See Prop. \ref{liftbylatch}.)

\begin{examples}
	If $\catc=\{0 \to 1\}$ and $\catm$ is a model category, 
	then $\catm^\catc$ is the arrow category of $\catm$ and a map
	of arrows $f:X \to Y$ may be viewed as a (distorted and decorated) commutative square.  
\diagramit{
X_0 \ar[d]_{f_0} \ar[r] & X_1 \ar[d]_{\blofx[1]{f}} \ar[ddr]^{f_1} \\
Y_0 \ar[r] \ar@/_4ex/[rrd] & \lofx[1]{f} \ar@{.>}[dr]_{\latch[1]{f}} \\
& & Y_1  
}
	Consider
	an entrywise acyclic fibration of arrows, $p:W \to Z$ with $p_0$ and $p_1$
	both acyclic fibrations in $\catm$, which fits into a (solid) lifting square
\diagramit{
X \ar[d]_{f} \ar[r] & W \ar[d]^{p} \\
Y \ar[r] \ar@{.>}[ur]^{k} & Z  .
}  
	It should be clear that a lift $k_0$ of $f_0$ against $p_0$ and
	a lift $k_1$ of $f_1$ against $p_1$ may not be sufficiently compatible to 
	define a morphism in
	the category of arrows, so a lift of $f$ against $p$.
	However a (dotted) lift $k_0$ in the diagram
\diagramit{
X_0 \ar[d]_{f_0} \ar[r] & W_0 \ar[d]^{p_0} \\
Y_0 \ar[r] \ar@{.>}[ur]^{k_0} & Z_0
}
	does induce a (solid) commutative square in $\catm$
\diagramit{
{\lofx[1]{f}} \ar[d]_{\latch[1]{f}} \ar[r] & W_1 \ar[d]^{p_1} \\
Y_1 \ar[r] \ar@{.>}[ur]^{k_1} & Z_1 
} 
	and a (dotted) lift in this second diagram includes precisely the required compatibility
	with $k_0$ in order to define a morphism
	of arrows $k:Y \to W$ which would be a lift of $f$ against $p$.  Thus, it makes more
	sense to require $\latch[1]{f}$ to have some lifting property than to consider
	only such a property for $f_1$.  
	In this example the dual notions of matching objects do not occur since
	$\catc$ is already directed (upward).
\end{examples}

Now the dual notions of matching constructions are more briefly introduced.

\begin{defn}
	\label{matchcat}
	Given $\alpha \in \catc$, define the \textit{matching category at $\alpha$}, or $\matchcat$, as
	the full subcategory of the restricted undercategory $\alpha\backslash\catc^{-}$ 
	(so objects are maps $\alpha \to \beta$ in $\catc^{-}$ while commutative triangles are
	morphisms) which does not contain (the initial object) $1_\alpha$.  
\end{defn}

Given $X:\catc \to \catm$, and $\alpha \in \catc$ 
associated to the inclusion functor $\matchcat \to \alpha\backslash\catc^{-}$
one has the key entry of the unit of adjunction
$X_\alpha \to \amofx{X}$ which will be called the absolute matching map, with target the
absolute matching object.  Given a map $f:X \to Y$ of diagrams, one has the following
commutative diagram in $\catm$
\diagramit{
X_\alpha \ar@{.>}[dr]^{\match{f}} \ar@/_2ex/[ddr]_{f_\alpha} \ar@/^4ex/[drr] \\
& \mofx{f} \ar[d]^{\bmofx{f}} \ar[r] & \amofx{X} \ar[d]^{\amofx{f}} \\
& Y_\alpha \ar[r] & \amofx{Y}
}
where $\mofx{f}$ is the pullback of the lower right portion, and the universal property induces
the dotted arrow.

Be sure to notice $f_\alpha=\bmofx{f}\match{f}$, just as $f_\alpha=\latch{f} \blofx{f}$ earlier.

Next is a convenient presentation of the inductive process, which is slightly more flexible
than the standard statements.  The added flexibility is what is needed for the current
generalizations.  While the techniques used here
are now standard, this statement goes a bit beyond previous claims and detailed proofs
have often been omitted, so a complete proof has been included as Appendix \ref{inductsec}.

\begin{prop}
	\label{liftbylatch}
	Suppose $f$ and $p$ are morphisms in $\catm^\catc$.  If $(\latch{f},\match{p})$ have the
	lifting property in $\catm$ for each $\alpha \in \catc$, then $(f,p)$ have the lifting property.
\end{prop}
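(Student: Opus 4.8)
The plan is the standard Reedy induction along the degree filtration, run carefully enough that the two halves of "compatibility with lower-degree data" are tracked separately. Let $g\colon A\to X$ and $h\colon B\to Y$ denote the top and bottom of the given square, so $pg=hf$, and we seek a natural $k\colon B\to X$ with $kf=g$ and $pk=h$. I would construct, by induction on $n$, a natural transformation $k^{(n)}$ from $B$ restricted to $\fnc$ to $X$ restricted to $\fnc$ which lifts the restricted square and extends $k^{(n-1)}$; the base case is the empty transformation on $\fnc[-1]=\emptyset$. Since the degree function takes values in the whole numbers, once every finite stage is built one sets $k_\alpha:=k^{(\deg\alpha)}_\alpha$; this is forced and well defined, and the resulting $k$ is natural because the naturality square for any single morphism $\alpha\to\beta$ lives entirely inside $\fnc$ for $n=\max(\deg\alpha,\deg\beta)$, where it already holds.

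For the inductive step, fix $\alpha$ with $\deg\alpha=n$. Every object of $\latchcat$ is the source of a non-identity $\catc^{+}$-morphism into $\alpha$, hence has degree $<n$; dually every object of $\matchcat$ has degree $<n$. So $k^{(n-1)}$ is already defined on all of $\latchcat$ and $\matchcat$. Feeding those values, together with $g$ and $h$, into the relevant (co)limits produces a canonical map $\lofx{f}\to X_\alpha$ (the two maps $\alofx{A}\to X_\alpha$ and $\alofx{B}\to X_\alpha$ defining the pushout $\lofx{f}$ agree on $\alofx{A}$ precisely because $kf=g$ already holds below degree $n$) and a canonical map $B_\alpha\to\mofx{p}$ (the two maps $B_\alpha\to\amofx{Y}$ agree because $pk=h$ already holds below degree $n$). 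A short diagram chase, using naturality of units of adjunction and of $k^{(n-1)}$, shows these fit into a commutative square with left edge $\latch{f}$ and right edge $\match{p}$. By hypothesis $(\latch{f},\match{p})$ have the lifting property in $\catm$, so choose a diagonal and call it $k_\alpha\colon B_\alpha\to X_\alpha$. Using the factorizations $f_\alpha=\latch{f}\,\blofx{f}$ and $p_\alpha=\bmofx{p}\,\match{p}$ recorded above, one reads off at once that $k_\alpha f_\alpha=g_\alpha$ and $p_\alpha k_\alpha=h_\alpha$; and the two triangle identities enjoyed by the chosen diagonal encode exactly the compatibility of $k_\alpha$ with every non-identity $\catc^{+}$-morphism into $\alpha$ (through the map $\lofx{f}\to X_\alpha$) and with every non-identity $\catc^{-}$-morphism out of $\alpha$ (through the map $B_\alpha\to\mofx{p}$).

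It then remains to check that the family $\{k_\alpha:\deg\alpha=n\}$ together with $k^{(n-1)}$ is a natural transformation on all of $\fnc$, and this assembly step is where I expect the real work to be. Given any morphism $\theta\colon\alpha\to\beta$ in $\fnc$, write its unique Reedy factorization $\theta=\theta_{+}\theta_{-}$ with $\theta_{-}\in\catc^{-}$, $\theta_{+}\in\catc^{+}$, through an object $\gamma$ whose degree is at most $\deg\alpha$ (and strictly less if $\theta_{-}$ is not an identity). The naturality square for $\theta$ then splices the naturality square for $\theta_{-}$ --- which, if $\deg\alpha=n$ and $\theta_{-}\neq1$, is one of the $\catc^{-}$-components of the matching compatibility of $k_\alpha$ just obtained, and otherwise is part of $k^{(n-1)}$ --- with the naturality square for $\theta_{+}$ --- which, if $\deg\beta=n$ and $\theta_{+}\neq1$, is one of the $\catc^{+}$-components of the latching compatibility of $k_\beta$, and otherwise is again part of $k^{(n-1)}$. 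Hence every naturality square holds, completing the induction; the base case $n=0$ is this same argument with $\latchcat$ and $\matchcat$ empty, so that $\lofx{f}=A_0$, $\mofx{p}=Y_0$, and one is merely lifting the $0$-component of the original square against itself. The genuine subtlety throughout is the bookkeeping: arranging that "compatibility with lower-degree data" splits cleanly into the $\catc^{+}$-half captured by the latching object and the $\catc^{-}$-half captured by the matching object, with the unique Reedy factorization axiom bridging the two; everything else is a diagram chase. Full details of exactly this argument are given in Appendix \ref{inductsec}.
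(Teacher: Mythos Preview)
Your proposal is correct and follows essentially the same route as the paper: the standard Reedy induction on degree, building at each stage the commutative square of Lemma~\ref{onlysquare}, choosing a diagonal by hypothesis, and then verifying that the chosen lifts assemble to a natural transformation (which the paper packages as Lemma~\ref{inductlift} via Lemma~\ref{inductfactor}, while you do the naturality check directly through the Reedy factorization of an arbitrary morphism). The only cosmetic difference is that the paper starts its induction at $\fnc[0]$ rather than the empty $\fnc[-1]$, but as you note these agree once one observes that $\latch{f}\approx f_\alpha$ and $\match{p}\approx p_\alpha$ at degree zero.
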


Another important technical result verified by this sort of induction is the following, 
which will help when characterizing the class of acyclic (co)fibrations in the structures 
defined below.  A complete proof is provided in \cite[Lemma 15.3.9]{Phil}, 
although the statement there initially looks a bit different.

\begin{lemma}
	\label{liftbysmaller}
	Suppose $f$ is a morphism in $\catm^\catc$ and $g$ is a morphism in $\catm$.
	If $(\latch[\beta]{f},g)$ has the lifting property whenever $\beta \in \latchcat$, then
	$(\alofx{f},g)$ has the lifting property.  Dually, if $(g,\match[\beta]{f})$
	has the lifting property whenever $\beta \in \matchcat$, then
	$(g,\amofx{f})$ has the lifting property.
\end{lemma}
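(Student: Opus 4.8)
For the final statement (Lemma~\ref{liftbysmaller}):

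The plan is to recognize $\latchcat$ as a Reedy category in its own right and to apply Proposition~\ref{liftbylatch} to diagrams indexed on it; the hypothesis will turn out to supply exactly the latching/matching data that proposition requires. The second statement is treated symmetrically, with $\matchcat$ in place of $\latchcat$.

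For the first statement I would proceed as follows. First, observe that $\latchcat$, with the degree function inherited from $\catc$, is a Reedy category: set $(\latchcat)^{+}=\latchcat$ and let $(\latchcat)^{-}$ be the discrete subcategory on all objects. Indeed every morphism of $\catc^{+}/\alpha$ lies in $\catc^{+}$, hence raises degree unless it is an identity, and unique factorization is then trivial; in particular every matching category of $\latchcat$ is empty. Writing $i_\alpha^{*}X$ and $i_\alpha^{*}Y$ for the restrictions of $X$ and $Y$ to $\latchcat$, one has $\alofx{X}=\colim_{\latchcat}i_\alpha^{*}X$ and $\alofx{f}=\colim_{\latchcat}i_\alpha^{*}f$. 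Since the constant-diagram functor $\catm\to\catm^{\latchcat}$ is right adjoint to $\colim_{\latchcat}$, a lifting square for $(\alofx{f},g)$ in $\catm$ transposes to a lifting square for $(i_\alpha^{*}f,c_g)$ in $\catm^{\latchcat}$, where $c_g$ is the constant natural transformation with value $g$, and lifts on the two sides are in bijection; so it suffices to prove that $(i_\alpha^{*}f,c_g)$ has the lifting property in $\catm^{\latchcat}$. I would deduce this from Proposition~\ref{liftbylatch} applied over the Reedy category $\latchcat$: the matter reduces to showing that $(\latch[\beta]{i_\alpha^{*}f},\match[\beta]{c_g})$ has the lifting property in $\catm$ for each object $\beta$ of $\latchcat$. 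As the matching categories of $\latchcat$ are empty, the relative matching object is terminal and $\match[\beta]{c_g}=g$. And the latching category of $\latchcat$ at an object $\beta\to\alpha$ is canonically isomorphic to $\text{Latch}(\beta)$ --- an object of the former amounts to a non-identity $\catc^{+}$-morphism with target $\beta$, its composite to $\alpha$ being determined --- and this isomorphism identifies the relevant restriction of $i_\alpha^{*}f$ with the restriction of $f$, so $\latch[\beta]{i_\alpha^{*}f}=\latch[\beta]{f}$. Hence the claim reduces to $(\latch[\beta]{f},g)$ having the lifting property for $\beta\in\latchcat$, which is precisely the hypothesis.

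The second statement is obtained by the same argument with $\matchcat$: setting $(\matchcat)^{-}=\matchcat$ and $(\matchcat)^{+}$ discrete makes $\matchcat$ a Reedy category all of whose latching categories are empty; one has $\amofx{f}=\lim_{\matchcat}i_\alpha^{*}f$, the constant-diagram functor $\catm\to\catm^{\matchcat}$ is left adjoint to $\lim_{\matchcat}$, and therefore $(g,\amofx{f})$ having the lifting property is equivalent to $(c_g,i_\alpha^{*}f)$ having it in $\catm^{\matchcat}$. Proposition~\ref{liftbylatch} over $\matchcat$ reduces this to the pairs $(\latch[\beta]{c_g},\match[\beta]{i_\alpha^{*}f})=(g,\match[\beta]{f})$ for $\beta\in\matchcat$, again the hypothesis. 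The one point that genuinely needs care --- and the only thing I would write out in detail --- is the canonical identification of the latching category of $\latchcat$ at $\beta$ with $\text{Latch}(\beta)$ (and dually, of the matching category of $\matchcat$ at $\beta$ with $\text{Match}(\beta)$) together with its compatibility with restriction of diagrams; the rest is bookkeeping with the colimit and limit adjunctions, and nothing beyond the existence of the relevant (co)limits in $\catm$ is used.
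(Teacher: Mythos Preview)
Your argument is correct. The paper does not actually give its own proof of this lemma; it simply cites \cite[Lemma~15.3.9]{Phil} and remarks that the statement there looks a bit different. Your proof is essentially the standard argument found there: recognize $\latchcat$ (respectively $\matchcat$) as a direct (respectively inverse) Reedy category, transpose the lifting problem across the $(\colim,\text{const})$ adjunction (respectively $(\text{const},\lim)$), and invoke the inductive lifting result---here Proposition~\ref{liftbylatch}---over that smaller Reedy category. The identification of the latching category of $\latchcat$ at an object $\beta\to\alpha$ with $\text{Latch}(\beta)$ (and the dual for matching categories) is exactly the point Hirschhorn isolates as well, so your emphasis on that step is well placed. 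In short, you have supplied precisely the details the paper elected to outsource.
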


\subsection{Acceptable Subcategories}

Next is the new condition, related to the requirement of choosing a set of objects, or 
equivalently a full subcategory, in these constructions.  
Choosing all objects, or equivalently the whole indexing category, recovers the traditional 
Reedy structure in this context.

\begin{defn}  
	\label{acceptdef}
Suppose $\catc$ is a Reedy category and $\catm$ has all small (co)limits.
	\begin{itemize}
	
\item  The full subcategory $\catc_0 \subset \catc$ will be called \textbf{left acceptable}
	  provided it inherits a Reedy category structure such that the
	  matching objects relative to $\catc_0$ and those relative to $\catc$ 
	  are naturally isomorphic at any object $\alpha \in \catc_0$.  	 
	   
\item  The full subcategory $\catc_0 \subset \catc$ will be called \textbf{right acceptable}
	  provided it inherits a Reedy category structure such that the
	  latching objects relative to $\catc_0$ and those relative to $\catc$ 
	  are naturally isomorphic at any object $\alpha \in \catc_0$.  	 
	
\item	 The term \textbf{acceptable} will apply when $\catc_0 \subset \catc$ is both left and right
	acceptable.
	
	\end{itemize}
\end{defn}

\begin{examples}
	\label{acceptexs}
\begin{enumerate}

\item  It is clear from the definitions that $\catc_0=\fnc$ is acceptable for each $n$.

\item\label{singleton}  Given any object $\beta$ of degree zero, 
	$\catc_0=\{ \beta \}$ is acceptable.
	In fact, these are generally the only singletons which can be (left or right) acceptable as
	the latching object for this $\catc_0$ would always be an initial object, so unlikely to agree
	with the latching object with respect to all of $\catc$ unless it has degree zero.  
	Similarly, the matching objects
	with respect to $\catc_0$ would always be a final object, so unlikely to agree
	with the matching object with respect to all of $\catc$ unless it has degree zero.

\item\label{grids}  To illustrate the general principle of Lemma \ref{increase} below, 
let $\catc_{m,n}$ denote a `grid-like' directed category (or $\catn \times \catn[m]$)
\diagramit{
00 \ar[d] \ar[r] & 01 \ar[d] \ar[r] & \dots \ar[r] & 0n \ar[d] \\
10 \ar[d] \ar[r] & 11 \ar[d] \ar[r] & \dots \ar[r] & 1n \ar[d] \\
\vdots & \vdots & \vdots & \vdots \\
m0 \ar[r] & m1 \ar[r] & \dots \ar[r] & mn
}
	As one example, the degree function could be chosen to be the sum of the indices,
	with $\catc^+=\catc$ and $\catc^-$ discrete.  Taking $\catc_0$ to be 
	the first row and first column is then left acceptable by Lemma \ref{increase}.  
	In fact, this example will
	yield model categories closely related to Waldhausen's algebraic K-theory functor
	in section \ref{enrich}.
	
\end{enumerate}
\end{examples}

One can say $\catc$ is monotone increasing if there is a degree function where
no morphisms decrease degree, or equivalently the decreasing category is discrete.
The notion of monotone decreasing is dual.

\begin{lemma}
	\label{increase}
	Any full subcategory of a monotone increasing Reedy category is left acceptable.
	Dually any full subcategory of a monotone decreasing Reedy category is right acceptable.
\end{lemma}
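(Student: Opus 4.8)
The plan is to observe that the hypothesis forces the matching categories in sight to be empty, so that the comparison of matching objects required by Definition \ref{acceptdef} becomes automatic. Recall that ``$\catc$ monotone increasing'' means we may choose a Reedy structure on $\catc$ for which $\catc^{-}$ is discrete (only identities); the uniqueness-of-factorization axiom of Definition \ref{basicreedy} then also forces $\catc^{+}=\catc$, since the $\catc^{-}$-part of the canonical factorization of any morphism must be an identity. I will fix such a structure throughout.

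First I would check that a full subcategory $\catc_0 \subset \catc$ inherits a Reedy structure, which is the one place the monotonicity is genuinely used --- a general full subcategory need not inherit one. Take the degree function on $\catc_0$ restricted from $\catc$, set $\catc_0^{+}=\catc_0$, and let $\catc_0^{-}$ consist of the identities only. The degree conditions are then immediate (non-identity morphisms of $\catc_0=\catc_0^{+}$ lie in $\catc=\catc^{+}$ and so raise degree, while $\catc_0^{-}$ has no non-identity morphisms), so the only point is the factorization axiom: given $f$ in $\catc_0$, its unique $\catc$-factorization $f=gp$ with $p\in\catc^{-}$ and $g\in\catc^{+}$ has $p$ an identity, hence the intermediate object equals the source of $f$ and lies in $\catc_0$; thus the factorization takes place inside $\catc_0$, and uniqueness is inherited from $\catc$. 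So $\catc_0$ is again a monotone increasing Reedy category.

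Next I would record that in \emph{any} monotone increasing Reedy category $\catd$, the matching category $\matchcat$ at every object $\alpha$ is empty: by Definition \ref{matchcat} its objects are the non-identity morphisms $\alpha\to\beta$ of $\catd^{-}$, of which there are none. Consequently, for every diagram $X$ the absolute matching object $\amofx{X}$, being a limit indexed by $\matchcat=\varnothing$, is (for a fixed choice of terminal object of $\catm$) literally the terminal object of $\catm$, naturally in $X$. Applying this to $\catd=\catc$ and, via the previous step, to $\catd=\catc_0$, the matching object at any $\alpha\in\catc_0$ computed relative to $\catc_0$ and the one computed relative to $\catc$ are each the terminal object of $\catm$, hence naturally isomorphic; this is precisely the condition for $\catc_0$ to be left acceptable. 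The dual assertion follows by applying what has just been proved to $\catc^{\mathrm{op}}$ --- passing to the opposite category interchanges monotone increasing with monotone decreasing, $\matchcat$ with $\latchcat$, absolute matching objects with absolute latching objects $\alofx{X}$, and left acceptable with right acceptable --- or equivalently by repeating the argument with $\catc^{+}$ discrete, $\catc^{-}=\catc$, every $\latchcat$ empty, and $\alofx{X}$ an initial object of $\catm$ throughout.

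As for the main obstacle: there is essentially no hard step, the content being entirely in unwinding the definitions once one notices the emptiness of the matching (resp.\ latching) categories. The only point deserving care is the inheritance of a Reedy structure on $\catc_0$ in the second step, which uses the monotone hypothesis in an essential way; a minor pedantic point is that ``naturally isomorphic'' here refers to the canonical comparison of two empty limits.
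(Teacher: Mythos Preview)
Your proof is correct and follows essentially the same approach as the paper: both arguments observe that in a monotone increasing Reedy category the matching categories are empty, so absolute matching objects (relative to either $\catc$ or $\catc_0$) are terminal and hence naturally isomorphic, with the inherited Reedy structure on $\catc_0$ following from the degree function restricting. Your version is simply more detailed, particularly in verifying the factorization axiom for $\catc_0$ explicitly.
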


\begin{proof}
	Whenever the decreasing category is discrete, or equivalently $\catc$ is monotone 		increasing, the matching objects are all limits of empty diagrams, hence the final object.  
	As a consequence, one has $\amofx{f}$ an isomorphism (identity of the final object)
	hence its base change $\bmofx{f}$ is an isomorphism between $\match{f}$ and
	$f_\alpha$.  Since the same is true for matching constructions relative to the
	subcategory, the matching condition for being left acceptable is then satisfied.  
	Choosing any degree function
	for the whole category, it descends to make the monotone increasing subcategory 
	$\catc_0$ a sub-Reedy category as well, and the dual case is similar. 
\end{proof}

\section{Constructing Left Modified Reedy Structures}
\label{buildsec}

Here is the construction of the left modified Reedy model category structure 
for an appropriate choice of diagram category, with detailed definitions and
statements for the dual right modified structures included in section \ref{rightmodsec}. 
The input throughout this section is a Reedy category $\catc$, together with a left acceptable
full subcategory $\catc_0$ and an $ob(\catc)$-indexed collection of model category structures 
$\catm_?$ on a fixed category 
$\catm$ which satisfy a compatibility condition as follows.  Of course, $Cof(\catm)$ indicates the
class of cofibrations in $\catm$ and similarly $Fib(\catm)$ indicates the class of fibrations.  

\begin{defn}[Left Compatibility Condition] Suppose $\alpha \in \catc$ with
	$\beta \in \latchcat$ and $\gamma \in \matchcat$.  Then
\label{compat}
\hspace{8in}
\begin{enumerate}

\item  \label{cofsincr} $Cof(\catm_\beta) \subset Cof(\catm_\alpha)$ 
\item  \label{fibsdecr} $Fib(\catm_\alpha) \subset Fib(\catm_\beta)$  
\item  \label{fibsincr} $Fib(\catm_\gamma) \subset Fib(\catm_\alpha)$
\item  \label{cofsdecr} If both $\alpha, \gamma \in \catc_0$ one has 
$Cof(\catm_\alpha) \subset Cof(\catm_\gamma)$.
\end{enumerate}
\end{defn}

Keep in mind that objects in latching or matching categories have 
smaller degrees than the indexing object, 
so for example the first portion says the class of cofibrations
is increasing in the degree as 
one moves along any chain of morphisms. 
At first glance, it appears the combination of these conditions should be that the model structure
remains constant.  However, this is far from the case, and there is a variety of
interesting examples.  

\begin{examples}
\label{leftcompex}
\begin{enumerate}

\item  Take as $\catm_?$ a fixed model structure on $\catm$ 
	(regardless of the value of ? in $ob(\catc)$).  If, in addition, one
	chooses $\catc_0=\catc$, this section will yield the original Reedy structure on 
	$\catm^\catc$ (with respect to this structure on $\catm$).  
	All other choices for $\catc_0$ will yield colocalizations of the original 
	Reedy structure when the family of model structures on $\catm$ is constant.

\item  Suppose $\catc$ is monotone increasing and
	$\catm$ is a left proper, cellular model category. Choose as $\catm_?$
	various (left Bousfield) localizations of this model structure in such a way as to
	localize more and more as the degree of the objects increases along any chain
	of maps.  Then the class of cofibrations
	remains that of the original $\catm$, the class of fibrations decreases as we localize
	so as to make more cofibrations acyclic, and the matching categories are all empty
	so the two conditions related to them are vacuously satisfied.  Hence, this family
	will satisfy the left compatibility condition.
	
\item  As a special case of (2), consider $\catc$ a commutative square
\diagramit{
00 \ar[d] \ar[r] & 01 \ar[d] \\
10 \ar[r] & 11
}
	with degree function given by the sum of the indices (monotone increasing
	so $\catc^+=\catc$ and $\catc^-$ discrete).   Then
	choose an appropriate model category $\catm_{00}$ and two localizations
	for $\catm_{01}$ and $\catm_{10}$.  Now think of forming the combined 
	localization if possible, inverting
	any cofibration inverted under either of the initial localizations, and allow this
	to be $\catm_{11}$.  Varying the choice of $\catc_0$, the model structures constructed
	here will be readily comparable to the original or any of the indicated localizations
	(see section 8 below).  

\end{enumerate}
\end{examples}

Now the definition of left modified Reedy structure can be made in terms of this input.  

\begin{defn}
	Given a Reedy category $\catc$, a left acceptable subcategory $\catc_0 \subset \catc$ 
	and an $ob(\catc)$-indexed family of model structures $\catm_?$ on $\catm$ 
	satisfying the left compatibility condition, the \textbf{left modified Reedy structure} on
	$\catm^\catc$, or $\leftmod$, consists of the classes of:
\begin{itemize}

\item	 weak equivalences, defined as those morphisms $f$ where $f_\alpha$ is a weak equivalence in 
	$\catm_\alpha$ whenever $\alpha \in \catc_0$;
\item  fibrations, defined as those morphisms $f$ where $\match{f}$ is a fibration in 
	$\catm_\alpha$ for each $\alpha$; and
\item  cofibrations, defined as those morphisms $f$ where $\latch{f}$ is a cofibration in 
	$\catm_\alpha$ for each $\alpha$, which must also be
	acyclic whenever $\alpha \notin \catc_0$.	
\end{itemize}
\end{defn}

\begin{remark}
	The fibrations in $\leftmod$ are precisely those of the standard Reedy structure
	(when $\catm_?$ is constant), while
	the cofibrations (which appear on the left in lifting diagrams) have been modified,
	hence the terminology.
\end{remark}

Notice there are now two alternative possible formulations of acyclic cofibrations, 
which must coincide if the result is to be a model category structure.  
Dually, a flexible characterization of acyclic fibrations is also necessary, which is 
actually the only point where the left acceptable condition is required.
Since the next two results are standard for the ordinary Reedy structure, a proof
is included only for the more difficult of them,
in order to make clear the dependence upon the various pieces of the compatibility 
assumption as well as the left acceptable condition.  

\begin{lemma}
	\label{acycofs}
	Suppose $\catm_?$ satisfies the left compatibility condition (parts  \eqref{cofsincr} and 
	\eqref{fibsdecr}).  Then the class of cofibrations in $\leftmod$ 
	which are also weak equivalences is characterized by $\latch{f}$
	an acyclic cofibration in $\catm_\alpha$ for each $\alpha$.  Furthermore, any 
	cofibration $f$ satisfies $f_\alpha$ a cofibration in $\catm_\alpha$ for each $\alpha$.
\end{lemma}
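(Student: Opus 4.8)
The plan is to prove both assertions of Lemma~\ref{acycofs} simultaneously by induction along the degree function, using the Reedy inductive framework from Appendix~\ref{inductsec} (so that $\blofx{f}$ is a cofibration in $\catm_\alpha$ for each $\alpha$, which gives $f_\alpha = \latch{f}\blofx{f}$ a composite of cofibrations, hence a cofibration, as soon as we know $\latch{f}$ is). The key input making the induction go through is part~\eqref{cofsincr} of the left compatibility condition: if $\beta \in \latchcat$, then $\blofx[\beta]{f}$ being a cofibration in $\catm_\beta$ promotes it to a cofibration in $\catm_\alpha$, which is exactly what is needed so that the pushout defining $\lofx{f}$ is a pushout of $\catm_\alpha$-cofibrations and $\latch{f}$ is again a $\catm_\alpha$-cofibration.

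**The forward direction.** First I would show that if $f$ is a cofibration in $\leftmod$ which is also a weak equivalence, then $\latch{f}$ is an acyclic cofibration in $\catm_\alpha$ for each $\alpha$. By definition $\latch{f}$ is a cofibration in $\catm_\alpha$, so the content is acyclicity. For $\alpha \notin \catc_0$ this is immediate from the definition of cofibration in $\leftmod$. For $\alpha \in \catc_0$, I would argue inductively: assuming $f_\beta$ is a $\catm_\beta$-weak equivalence for all $\beta$ of smaller degree lying in $\catc_0$ (and using the compatibility to compare cofibration classes), the absolute latching map comparison $\alofx{f}$ is a weak equivalence, so in the factorization $f_\alpha = \latch{f}\blofx{f}$ together with the pushout square, two-out-of-three inside $\catm_\alpha$ forces $\latch{f}$ to be acyclic. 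Here one uses that $f_\alpha$ is a $\catm_\alpha$-weak equivalence (given) and that $\blofx{f}$, being a cofibration whose cobase change recovers $\latch{f}$, interacts correctly — this is the step where left properness-type behaviour of cofibrations, or rather just the pushout-of-acyclic-cofibration stability, is invoked; since $\catm_\alpha$ is a model category this is automatic.

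**The reverse direction.** Conversely, given $f$ with $\latch{f}$ an acyclic cofibration in $\catm_\alpha$ for every $\alpha$, I must show $f$ is both a cofibration and a weak equivalence in $\leftmod$. It is a cofibration by definition (acyclic cofibrations are in particular cofibrations, and the extra condition at $\alpha \notin \catc_0$ is exactly acyclicity, which we have). For the weak equivalence part, induct on degree: $\blofx{f}$ is a cofibration in $\catm_\alpha$ by the Reedy framework, $\latch{f}$ is an acyclic cofibration by hypothesis, and $f_\alpha = \latch{f}\blofx{f}$; to conclude $f_\alpha$ is a $\catm_\alpha$-weak equivalence I would show $\blofx{f}$ is itself a $\catm_\alpha$-weak equivalence, which follows because its cobase change along $\alofx{f} \to \lofx{f}$... — more precisely, one checks $\alofx{f}$ is a $\catm_\alpha$-weak equivalence using the inductive hypothesis at smaller degrees together with part~\eqref{cofsincr} to transport cofibrancy of the latching-diagram comparisons up to $\catm_\alpha$, then $\blofx{f}$ is the cobase change of $f_\alpha$... here I need to be careful about the direction of the pushout. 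The cleanest route is: the square with corners $\alofx{X},X_\alpha,\alofx{Y},\lofx{f}$ is a pushout, its left edge $\alofx{f}$ is an acyclic cofibration in $\catm_\alpha$ (inductive hypothesis plus compatibility), hence the right edge $X_\alpha \to \lofx{f}$ is an acyclic cofibration, and composing with the acyclic cofibration $\latch{f}$ gives $f_\alpha$ acyclic. Restricting attention to $\alpha \in \catc_0$ then yields that $f$ is a weak equivalence in $\leftmod$.

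**Main obstacle.** I expect the real work to be the verification that $\alofx{f}$ (equivalently the pushout edge $X_\alpha \to \lofx{f}$) is an acyclic cofibration in $\catm_\alpha$, which requires feeding the inductive hypothesis through Lemma~\ref{liftbysmaller}: one knows $\latch[\beta]{f}$ is an acyclic cofibration in $\catm_\beta$ for $\beta \in \latchcat$, wants the analogous statement for $\alofx{f}$ in $\catm_\alpha$, and must bridge the gap between the model structures $\catm_\beta$ and $\catm_\alpha$ using only the inclusion $Cof(\catm_\beta) \subset Cof(\catm_\alpha)$ and the corresponding reverse inclusion of fibrations from~\eqref{fibsdecr} — the point being that an acyclic cofibration in $\catm_\beta$ need not be acyclic in $\catm_\alpha$, so one characterizes acyclic cofibrations by a lifting property against $Fib(\catm_\alpha) \subseteq Fib(\catm_\beta)$ and applies Lemma~\ref{liftbysmaller} in that form. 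Getting this bookkeeping exactly right, and being explicit about which half of the compatibility condition is used where, is the one genuinely delicate part; everything else is a routine two-out-of-three and pushout-stability argument internal to each fixed $\catm_\alpha$.
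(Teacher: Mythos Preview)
Your proposal is correct and follows precisely the approach the paper intends: the paper omits this proof as the easier dual of Lemma~\ref{acycfibs}, and your argument is exactly that dual---induct on degree, use Lemma~\ref{liftbysmaller} together with compatibility parts~\eqref{cofsincr} and~\eqref{fibsdecr} to promote $\alofx{f}$ to an (acyclic) cofibration in $\catm_\alpha$, pass to the cobase change $\blofx{f}$, then apply 2-of-3 to the factorization $f_\alpha=\latch{f}\circ\blofx{f}$. One small slip to clean up: in your forward-direction sketch you write ``$\blofx{f}$, being a cofibration whose cobase change recovers $\latch{f}$,'' which is backwards---$\blofx{f}$ is the cobase change of $\alofx{f}$, and $\latch{f}$ is the induced map out of the pushout; you have this right elsewhere, so just align the wording.
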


\begin{lemma}
	\label{acycfibs}
	Suppose $\catc_0 \subset \catc$ is left acceptable and $\catm_?$ satisfies the
	left compatibility condition (parts \eqref{fibsincr} and \eqref{cofsdecr}).  Then in $\leftmod$
	the class of fibrations which are also weak equivalences 
	is characterized by $\match{f}$ a fibration in $\catm_\alpha$ for each $\alpha$,
	which must be acyclic if $\alpha \in \catc_0$.  Furthermore, any fibration $p$ satisfies
	$p_\alpha$ a fibration in $\catm_\alpha$ for each $\alpha$.
\end{lemma}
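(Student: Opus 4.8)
The plan is to deal with the ``furthermore'' clause first, since it uses none of the left acceptable machinery yet is needed repeatedly, and then to prove the two halves of the characterization: the $(\Leftarrow)$ implication by reducing the absolute matching object appearing in the factorization $p_\alpha=\bmofx{p}\circ\match{p}$ to a matching object computed inside $\catc_0$, and the $(\Rightarrow)$ implication by an induction on $\catc_0$-degree that recycles that same reduction.

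First I would fix a fibration $p$ in $\leftmod$, so each $\match{p}$ is a fibration in $\catm_\alpha$, and recall that $\bmofx{p}$ is the base change of the absolute matching map $\amofx{p}$ along $Y_\alpha\to\amofx{Y}$. Since fibrations are stable under base change and under composition in any model category, the ``furthermore'' clause reduces to showing $\amofx{p}$ is a fibration in $\catm_\alpha$, i.e.\ has the right lifting property against the acyclic cofibrations of $\catm_\alpha$. By the matching statement of Lemma~\ref{liftbysmaller} this holds as soon as $(g,\match[\beta]{p})$ has the lifting property for every $\beta\in\matchcat$ and every acyclic cofibration $g$ of $\catm_\alpha$; and indeed $\match[\beta]{p}\in Fib(\catm_\beta)\subseteq Fib(\catm_\alpha)$ by the left compatibility condition, part~\eqref{fibsincr} (taking $\gamma=\beta$), so such a $g$ lifts against it. Hence $\amofx{p}$, then $\bmofx{p}$, then $p_\alpha$ is a fibration in $\catm_\alpha$.

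The $(\Leftarrow)$ direction is the step I expect to be the \emph{main obstacle}: the weak equivalences of $\leftmod$ are tested only at the objects of $\catc_0$, while $\matchcat$ usually contains objects outside $\catc_0$, so one cannot simply induct along $\matchcat$. The left acceptable hypothesis (Definition~\ref{acceptdef}) is exactly what repairs this, for at any $\alpha\in\catc_0$ it provides a natural isomorphism between $\amofx{-}$ and the matching object $M_\alpha^{\catc_0}(-)$ formed inside $\catm^{\catc_0}$, compatibly with the maps out of $X_\alpha$; applied at $\alpha$ and at the objects of $\catc_0$ of lower degree this identifies $\amofx{p}$ with $M_\alpha^{\catc_0}(p)$ and each $\match[\beta]{p}$ with the corresponding relative matching map $\mu_\beta^{\catc_0}(p)$ in $\catm^{\catc_0}$. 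So assume $p$ is a fibration in $\leftmod$ with $\match{p}$ in addition acyclic in $\catm_\alpha$ whenever $\alpha\in\catc_0$, fix $\alpha\in\catc_0$, and apply the matching statement of Lemma~\ref{liftbysmaller} to the Reedy category $\catc_0$: each $\beta$ in the matching category of $\alpha$ relative to $\catc_0$ lies in $\catc_0$ with strictly smaller degree, and $\mu_\beta^{\catc_0}(p)\cong\match[\beta]{p}$ is an acyclic fibration in $\catm_\beta$ by hypothesis, hence has the right lifting property against $Cof(\catm_\beta)\supseteq Cof(\catm_\alpha)$ by the left compatibility condition, part~\eqref{cofsdecr} (applicable since $\alpha,\beta\in\catc_0$ and $\beta\in\matchcat$). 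Therefore $M_\alpha^{\catc_0}(p)\cong\amofx{p}$ has the right lifting property against $Cof(\catm_\alpha)$, i.e.\ is an acyclic fibration in $\catm_\alpha$, so its base change $\bmofx{p}$ is too, and $p_\alpha=\bmofx{p}\circ\match{p}$ is a composite of acyclic fibrations in $\catm_\alpha$, in particular a weak equivalence there; as $\alpha\in\catc_0$ was arbitrary, $p$ is an acyclic fibration in $\leftmod$.

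For $(\Rightarrow)$, if $p$ is an acyclic fibration in $\leftmod$ then each $\match{p}$ is a fibration in $\catm_\alpha$ by the definition of a fibration there, and it remains to show it is moreover acyclic whenever $\alpha\in\catc_0$; this I would do by induction on the degree of $\alpha$ in the Reedy structure $\catc_0$ inherits, the inputs being that each $p_\alpha$ with $\alpha\in\catc_0$ is a weak equivalence in $\catm_\alpha$ (the content of $p$ being a weak equivalence in $\leftmod$) together with the fibrancy just noted. At the stage of $\alpha\in\catc_0$ the inductive hypothesis gives $\match[\beta]{p}$ an acyclic fibration in $\catm_\beta$ for every $\beta\in\catc_0$ of smaller degree (the base case being the degenerate one in which the matching category of $\alpha$ relative to $\catc_0$ is empty and $\amofx{p}$ is an isomorphism), so verbatim the computation of the previous paragraph makes $\amofx{p}\cong M_\alpha^{\catc_0}(p)$, hence its base change $\bmofx{p}$, an acyclic fibration in $\catm_\alpha$; then in $p_\alpha=\bmofx{p}\circ\match{p}$ both $p_\alpha$ and $\bmofx{p}$ are weak equivalences, so two-out-of-three in $\catm_\alpha$ forces $\match{p}$ to be a weak equivalence, and being already a fibration there it is acyclic, which closes the induction. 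Beyond the bookkeeping with matching constructions relative to $\catc_0$, the only genuinely delicate point throughout is verifying that the left acceptable isomorphism is natural enough to transport the relative matching maps, which is precisely where Definition~\ref{acceptdef} is brought to bear.
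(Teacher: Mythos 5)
Your proposal is correct and follows essentially the same route as the paper's proof: the ``furthermore'' clause via Lemma \ref{liftbysmaller} and part \eqref{fibsincr}, the sufficiency direction by using the left acceptable isomorphism to transport the lifting problem to matching constructions formed in $\catc_0$ together with part \eqref{cofsdecr}, and the necessity direction by induction on degree and two-out-of-three applied to $p_\alpha=\bmofx{p}\circ\match{p}$. If anything you are slightly more careful than the paper at the induction step, where you make explicit that the inductive hypothesis only controls $\match[\beta]{p}$ for $\beta\in\catc_0$ and hence that the reduction to $\catc_0$-matching objects is needed again there.
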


\begin{proof}
	First,  suppose $p$ a morphism in $\catm^\catc$ with each $\match{p}$ a fibration 
	in $\catm_\alpha$.  In particular, $\match[\gamma]{f}$ is then a fibration in 
	$\catm_\alpha$ whenever $\gamma \in \matchcat$ by \eqref{fibsincr} of the left
	compatibility condition.  Then to see $p_\alpha$
	is a fibration in $\catm_\alpha$, it suffices to see $\bmofx{p}$ is a fibration
	in $\catm_\alpha$, or by closure under cobase change, that $\amofx{p}$ is
	a cofibration in $\catm_\alpha$.  However, this follows from Lemma \ref{liftbysmaller}
	by considering an arbitrary acyclic cofibration $f$ in $\catm_\alpha$, and observing
	that $(f,\match[\gamma]{p})$ has the lifting property.
				
	If $p$ has each $\match{p}$ a fibration in 
	$\catm_\alpha$ that must also
	be acyclic when $\alpha \in \catc_0$, then suppose $\gamma \in \matchcat$ with 
	$\alpha,\gamma \in \catc_0$.  By \eqref{cofsdecr} of the left compatibility condition,
	$f \in Cof(\catm_\alpha)$ implies $(f,\match[\gamma]{p})$ has the lifting property.  Due to the
	left acceptable condition, it would be equivalent to consider lifting against the matching map
	formed in the subcategory $\match[\gamma]{p}^{\catc_0}$, so by
	applying Lemma \ref{liftbysmaller} with respect to the smaller indexing category
	$\catc_0$ it follows that $(f,\altmofx{p})$ has the lifting property.  Again using the
	left acceptable condition, one concludes $(f,\amofx{p})$ also has the lifting property.  
	Since $f$ was arbitrary, it follows 
	that $\amofx{p}$, and as a consequence its base change $\bmofx{p}$,
	is an acyclic fibration in $\catm_\alpha$. 
	
	Finally, suppose $p$ is a fibration as well as a weak equivalence.  Then by definition
	each $\match{p}$ is a fibration, which must be acyclic whenever $\alpha \notin \catc_0$
	and each $p_\alpha$ with $\alpha \in \catc_0$ is a weak equivalence in $\catm_\alpha$.
	Proceed by induction on the degree of $\alpha$ to verify that $\match{p}$ is
	an acyclic fibration in $\catm_\alpha$ even when $\alpha \in \catc_0$.  
	If $\alpha \in \catc_0$ with $|\alpha|=0$, then
	$\match{p}\approx p_\alpha$ so the claim follows from the assumption that
	$p$ is a weak equivalence.
	Now assume $\match[\gamma]{p}$ is an acyclic fibration in $\catm_\gamma$ whenever 
	$\gamma \in \matchcat$, so as in the previous paragraph $\amofx{p}$, 
	hence also its base change $\bmofx{p}$, is an acyclic fibration
	in $\catm_\alpha$ (using Lemma \ref{liftbysmaller} and  \eqref{cofsdecr} of the left
	compatibility condition).  
	Then the decomposition $p_\alpha=\bmofx{p} \circ \match{p}$ and the
	2 of 3 property for weak equivalences in $\catm_\alpha$ implies
	$\match{p}$ is a weak equivalence in $\catm_\alpha$ as well.
\end{proof}

Next is the existence theorem for the left modified Reedy structure.
With all of the technical details handled already, the proof is now relatively short.

\begin{thm}
	\label{modReedy}
	Suppose $\catc_0$ is a left acceptable full subcategory of $\catc$, and $\catm_?$ is a
	family of model structures satisfying the left compatibility condition.  Then 
	$\leftmod$ is a Quillen model category.
\end{thm}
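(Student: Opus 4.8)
The plan is to verify the model category axioms for $\catm^\catc$ equipped with the three classes just defined, leaning on the technical results already in hand. The first three axioms are essentially formal. Small limits and colimits in $\catm^\catc$ are formed objectwise, so they exist because $\catm$ has all small (co)limits. The two-out-of-three property is immediate: a weak equivalence in $\leftmod$ is a map $f$ with $f_\alpha$ a weak equivalence in $\catm_\alpha$ for every $\alpha \in \catc_0$, so the claim reduces objectwise to the same axiom in each $\catm_\alpha$. Closure under retracts is equally cheap once one observes that the three classes are detected by the functors $f \mapsto f_\alpha$ (for $\alpha \in \catc_0$), $f \mapsto \latch{f}$, and $f \mapsto \match{f}$, each of which is assembled from (co)limits and therefore carries a retract in the arrow category of $\catm^\catc$ to a retract in the arrow category of $\catm$; since cofibrations, fibrations, acyclic cofibrations, and acyclic fibrations are retract-closed in each $\catm_\alpha$, the corresponding classes in $\leftmod$ are too.

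For the lifting axioms I would invoke Prop. \ref{liftbylatch}. To lift a cofibration $f$ against an acyclic fibration $p$, it suffices by that proposition to check that $(\latch{f}, \match{p})$ has the lifting property in $\catm$ for each $\alpha \in \catc$. By the definition of cofibration in $\leftmod$, $\latch{f}$ is a cofibration in $\catm_\alpha$, acyclic when $\alpha \notin \catc_0$; by Lemma \ref{acycfibs}, $\match{p}$ is a fibration in $\catm_\alpha$, acyclic when $\alpha \in \catc_0$. So, whether or not $\alpha$ lies in $\catc_0$, one of the pair is acyclic and the lift exists in the model category $\catm_\alpha$; Prop. \ref{liftbylatch} assembles these objectwise lifts into a lift of $f$ against $p$. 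Dually, to lift an acyclic cofibration $f$ against a fibration $p$, Lemma \ref{acycofs} gives that $\latch{f}$ is an acyclic cofibration in $\catm_\alpha$ for every $\alpha$, while $\match{p}$ is a fibration in $\catm_\alpha$ by definition, so again each $(\latch{f}, \match{p})$ has the lifting property and Prop. \ref{liftbylatch} finishes the argument.

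The factorization axioms are obtained by the usual Reedy induction on degree, in the form spelled out in Appendix \ref{inductsec}. Given $f: X \to Y$, suppose a factorization $X \xrightarrow{i} Z \xrightarrow{q} Y$ has been built over $\fnc[n-1]$. For an object $\alpha$ of degree $n$, the relative latching object $\lofx{i}$ and the relative matching object $\mofx{q}$ depend only on the data already constructed in degrees below $n$ together with the given $X_\alpha$ and $Y_\alpha$, and there is a canonical map $\lofx{i} \to \mofx{q}$ in $\catm$; a factorization of this map in the model category $\catm_\alpha$ defines $Z_\alpha$, its first map being the relative latching map $\latch{i}$ and its second the relative matching map $\match{q}$, and this extends the factorization to $\fnc[n]$. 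For the (acyclic cofibration, fibration) factorization one chooses, at every $\alpha$, a factorization into an acyclic cofibration followed by a fibration of $\catm_\alpha$; then $\latch{i}$ is an acyclic cofibration in $\catm_\alpha$ for all $\alpha$, so $i$ is an acyclic cofibration in $\leftmod$ by Lemma \ref{acycofs}, while $\match{q}$ is a fibration in $\catm_\alpha$ for all $\alpha$, so $q$ is a fibration by definition. For the (cofibration, acyclic fibration) factorization one chooses a (cofibration, acyclic fibration) factorization of $\catm_\alpha$ when $\alpha \in \catc_0$ and an (acyclic cofibration, fibration) factorization when $\alpha \notin \catc_0$; then $\latch{i}$ is a cofibration in $\catm_\alpha$ for all $\alpha$, acyclic off $\catc_0$, so $i$ is a cofibration in $\leftmod$, while $\match{q}$ is a fibration in $\catm_\alpha$ for all $\alpha$, acyclic on $\catc_0$, so $q$ is an acyclic fibration by Lemma \ref{acycfibs}.

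I expect the main obstacle to be the bookkeeping inside the inductive factorization rather than any single hard idea: one must check that the canonical map $\lofx{i} \to \mofx{q}$ is well defined, that the object $Z_\alpha$ produced by factoring it assembles with the lower-degree data into an honest $\catc$-diagram, and that the two factors really are the relative latching and matching maps of the two-stage factorization of $f$ at $\alpha$ — precisely the content of the inductive framework of the appendix. The only genuine choice is which of the two factorizations in $\catm_\alpha$ to use, dictated by whether $\alpha \in \catc_0$; all of the interaction with the left compatibility condition and with left acceptability has already been absorbed into Lemmas \ref{acycofs} and \ref{acycfibs}, so no further hypothesis on the family $\catm_?$ is needed at this stage.
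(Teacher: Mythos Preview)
Your proposal is correct and follows essentially the same route as the paper's own proof: the formal axioms are handled entrywise, the lifting axioms via Prop.~\ref{liftbylatch} together with Lemmas~\ref{acycofs} and~\ref{acycfibs}, and the factorizations by Reedy induction (Lemma~\ref{inductfactor}) with the choice of factorization in $\catm_\alpha$ governed by whether $\alpha\in\catc_0$. The only cosmetic difference is the order in which you present the two factorizations.
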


\begin{proof}
	The existence of (co)limits is well-known in this case, as they are built
	`entrywise' (e.g. \cite[Cor. to V.3.1]{MacLane}).  The 2 of 3 property and closure
	of each class under retracts is a consequence
	of the definitions, the fact that latching and matching constructions preserve retracts,
	and the same property in each $\catm_\alpha$.  
		
	Suppose $f$ is an acyclic cofibration and $p$ is a fibration.  By Lemma \ref{acycofs}
	and the definition of fibration, each $(\latch{f},\match{p})$ has the lifting property, so
	by Prop. \ref{liftbylatch} $(f,p)$ also has the lifting property.  If instead 
	$f$ is a cofibration and $p$ is an acyclic fibration, $(\latch{f},\match{p})$ still
	has the lifting property for each $\alpha$ by Lemma \ref{acycfibs} and the definition
	of cofibration, since $\latch{f}$ is acyclic if $\alpha \notin \catc_0$, whereas $\match{p}$
	is acyclic if $\alpha \in \catc_0$.  As a consequence, Prop. \ref{liftbylatch} still
	implies $(f,p)$ has the lifting property.
	
	Suppose $g:X \to Y$ is an arbitrary morphism of $\catm^\catc$, and  
	inductively produce a factorization $g=pf$ with $p$ an acyclic fibration and $f$ a 
	cofibration.  
		
	Since $\fnc[0]$ is discrete, if $\alpha \in \catc_0$ one simply chooses an appropriate 
	factorization of each $g_\alpha$, as a cofibration 
	$f_\alpha:X_\alpha \to Z_\alpha$ followed by an acyclic fibration 
	$p_\alpha:Z_\alpha \to Y_\alpha$ in $\catm_\alpha$, or if instead 
	$\alpha \notin \catc_0$ with $f_\alpha$ an acyclic cofibration and $p_\alpha$
	a fibration in $\catm_\alpha$.  
	
	Now suppose a factorization as $\fnc[n-1]$-indexed diagrams has been chosen.  Given
	$\alpha$ of degree $n$, if $\alpha \in \catc_0$, factor the induced map 
	$\lofx{f} \to \mofx{p}$ as a
	cofibration $\latch{f}$ followed by an acyclic fibration $\match{p}$ in $\catm_\alpha$,
	or if instead $\alpha \notin \catc_0$ with $\latch{f}$ an acyclic cofibration followed
	by a fibration $\match{p}$.  By
	Lemma \ref{inductfactor}, these choices suffice to define a factorization
	$X \to Z \to Y$ as $\fnc$-indexed diagrams, completing the induction step for producing
	a factorization.  Notice
	$f$ so constructed is a cofibration by definition, while Lemma \ref{acycfibs} implies 
	$p$ is an acyclic fibration.  
	
	For the other factorization, one factors in each instance as an acyclic cofibration followed
	by a fibration, and refers to Lemma \ref{acycofs} rather than \ref{acycfibs}.
\end{proof}

\section{Inheritance Properties of Left Modified Reedy Structures}
\label{inherit}

The point of this section is to indicate that many of the commonly used 
conditions in model categories are inherited under this construction.  The property of
being cofibrantly generated is surprisingly technical, so might have been avoided other
than for its potential usefulness in algebraic K-theory applications.  In order to avoid an
Eilenberg swindle forcing algebraic K-theory to vanish, one needs to impose some 
sort of finiteness condition, which can sometimes be phrased nicely using cofibrant
generation (see \cite{Sagave}).  Thus, a special case sufficient for these applications is included
near the end of this section, but cofibrant generation is not discussed for the right modified Reedy
structures at all.

The remaining conditions are relatively straightforward, so are handled first. 

\subsection{Inheriting Properness, Being Simplicial, and Quillen Pairs}

The three conditions which are inherited without undue difficulty are properness, 
compatibility with the `internal' simplicial structure, and the existence of strong Quillen pairs
or even further strong Quillen equivalences.
Properness can be split into two pieces, and either piece will be inherited, 
but only the combined statement is given here.

\begin{lemma}
	\label{proper}
	If each model category $\catm_\alpha$ is proper, then 
	$\leftmod$ is proper.
\end{lemma}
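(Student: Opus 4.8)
The plan is to reduce properness of $\leftmod$ to properness of each $\catm_\alpha$, exploiting two facts already at hand: all small (co)limits in $\catm^\catc$ — in particular the pushouts and pullbacks occurring in the two halves of properness — are computed entrywise (\cite[Cor. to V.3.1]{MacLane}), and by Lemmas \ref{acycofs} and \ref{acycfibs} every cofibration $f$ of $\leftmod$ has $f_\alpha \in Cof(\catm_\alpha)$ for all $\alpha$ while every fibration $p$ has $p_\alpha \in Fib(\catm_\alpha)$ for all $\alpha$.

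For left properness, take a pushout square in $\catm^\catc$ whose two given legs are a cofibration $i\colon A \to B$ and a weak equivalence $w\colon A \to C$, and let $B \to D$ be the induced map. Evaluating the square at an arbitrary $\alpha \in \catc$ yields a pushout square in the category $\catm$ with legs $i_\alpha$ and $w_\alpha$; here $i_\alpha$ is a cofibration in $\catm_\alpha$ by Lemma \ref{acycofs}, and, when $\alpha \in \catc_0$, $w_\alpha$ is a weak equivalence in $\catm_\alpha$ by the definition of weak equivalence in $\leftmod$. Left properness of $\catm_\alpha$ then makes $B_\alpha \to D_\alpha$ a weak equivalence in $\catm_\alpha$ for every $\alpha \in \catc_0$, which is precisely the statement that $B \to D$ is a weak equivalence in $\leftmod$. (The entries with $\alpha \notin \catc_0$ need no attention, and since the underlying category is the single $\catm$, the entrywise pushout is the same object no matter which model structure $\catm_\alpha$ is being recorded at that spot.) The right properness half is entirely parallel: given a pullback square with legs a fibration $p\colon C \to A$ and a weak equivalence $A' \to A$, evaluation at $\alpha$ gives a pullback square in $\catm$ in which $p_\alpha \in Fib(\catm_\alpha)$ by Lemma \ref{acycfibs} and $A'_\alpha \to A_\alpha$ is a weak equivalence in $\catm_\alpha$ whenever $\alpha \in \catc_0$; right properness of $\catm_\alpha$ then forces the pulled-back map to be a weak equivalence in $\catm_\alpha$ at each $\alpha \in \catc_0$, hence in $\leftmod$.

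There is no real obstacle here; the one point deserving care is the appeal to the two preceding lemmas to know that cofibrations (resp. fibrations) of $\leftmod$ restrict to entrywise cofibrations (resp. fibrations) in the relevant $\catm_\alpha$. The modified acyclicity conditions imposed on $\latch{f}$ for $\alpha \notin \catc_0$, or on $\match{p}$ for $\alpha \in \catc_0$, play no role, since properness only constrains the behavior of weak equivalences, and in $\leftmod$ that is tested solely on $\catc_0$. If one prefers, left and right properness can be recorded separately, and the argument above establishes each independently from the corresponding hypothesis on the family $\catm_?$.
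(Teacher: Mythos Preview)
Your argument is correct and follows exactly the paper's approach: use Lemma~\ref{acycofs} (and Lemma~\ref{acycfibs} for the dual half) to see that (co)fibrations in $\leftmod$ are entrywise (co)fibrations in $\catm_\alpha$, combine this with entrywise (co)limits, and test weak equivalences only on $\catc_0$. The paper's proof is the same, only compressed to two sentences.
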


\begin{proof}
	Lemma \ref{acycofs} implies a cofibration $f$ has each $f_\alpha$ a cofibration
	in $\catm_\alpha$, and pushouts are defined entrywise in $\catm^\catc$, so left
	properness follows.  Right properness is dual. 
\end{proof}

Another property one would like to inherit would be compatibility with a simplicial structure.  
Here we use what Goerss-Jardine \cite[just above VII.2.13]{GJ} call the ``internal" structure, 
sometimes known as 
the ``entrywise" structure, which differs from the ``external" structure \cite[II.2.5 and above]{GJ} used by Quillen 
in the special case where $\catc=\Delta^{op}$.  For the sake of clarity, the following is 
to remind the reader how the relevant operations are defined, assuming a fixed 
simplicial structure $(\otimes_\catm,\hom_{\catm})$ on $\catm$ has already been chosen.
Given $X$, $Y \in \catm^{\catc}$ and $K \in \sset$, define
$X \otimes K \in \catm^{\catc}$ by $(X \otimes K)_\alpha = X_\alpha \otimes_{\catm} K$ 
and similarly
$\hom(K,Y) \in \catm^{\catc}$ by $\hom(K,Y)_\alpha = \hom_{\catm}(K,Y_\alpha)$, while
$\map(X,Y) \in \sset$ is defined by $\map(X,Y)_n= \catm^\catc(X \otimes \Delta[n],Y)$.  Then
the triple adjunction relationship is expressed by the following natural isomorphisms of simplicial
sets
\[\sset(K,\map(X,Y)) \approx \map(X \otimes K,Y) \approx \map(X,\hom(K,Y))  .
\]
  
\begin{prop}
	\label{internalsimp}
	If $\catm$ has a simplicial structure in which each
	$\catm_\alpha$ is a simplicial model category, 
	then the ``internal" simplicial structure described above
	makes any left modified Reedy structure (which exists) into a simplicial model category.
\end{prop}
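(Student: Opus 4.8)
The plan is to verify the pushout-product (SM7) axiom for the internal simplicial structure, since the adjunctions displayed above are formal and the enrichment of $\catm^\catc$ over $\sset$ follows immediately from the entrywise definitions. Concretely, I would show that if $i:A\to B$ is a cofibration in $\leftmod$ and $j:K\to L$ is a cofibration of simplicial sets, then the pushout-product map
\[
i\,\square\,j\colon (B\otimes K)\amalg_{A\otimes K}(A\otimes L)\longrightarrow B\otimes L
\]
is a cofibration in $\leftmod$, which is moreover acyclic if either $i$ or $j$ is. The key point is to compute the relative latching map $\latch{(i\square j)}$ in terms of the relative latching map $\latch{i}$ of $i$. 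Because $\otimes K$ is defined entrywise and preserves all colimits in each variable, both the absolute latching object and the pushout forming $\lofx{\cdot}$ commute with $-\otimes_\catm K$; hence one gets a natural identification $\latch{(i\square j)}\cong \latch{i}\,\square\,j$, the pushout-product (in $\catm_\alpha$) of the relative latching map of $i$ with $j$. I expect this identification to be the main obstacle: it requires checking that the various pushouts defining $\lofx{\cdot}$ and the pushout defining $i\square j$ can be interchanged, which is a diagram chase with colimits but should go through cleanly since $\latchcat$ and $\sset$ contribute colimits in independent variables.

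Granting that identification, the rest is routine. Since each $\catm_\alpha$ is a simplicial model category and (by the definition of cofibration in $\leftmod$) $\latch{i}$ is a cofibration in $\catm_\alpha$, the pushout-product $\latch{i}\,\square\,j$ is a cofibration in $\catm_\alpha$ for every $\alpha$; thus $i\square j$ is a cofibration in $\leftmod$ by definition. If $j$ is acyclic, then $\latch{i}\,\square\,j$ is acyclic in $\catm_\alpha$ by SM7 in $\catm_\alpha$, so $\latch{(i\square j)}$ is an acyclic cofibration for every $\alpha$, whence $i\square j$ is an acyclic cofibration in $\leftmod$ by Lemma \ref{acycofs}. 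If instead $i$ is acyclic, then by Lemma \ref{acycofs} each $\latch{i}$ is an acyclic cofibration in $\catm_\alpha$, so again $\latch{i}\,\square\,j$ is acyclic in $\catm_\alpha$ by SM7 there, and the same conclusion follows. (The case-split in the definition of cofibration for $\alpha\notin\catc_0$ causes no trouble, since ``acyclic cofibration in $\catm_\alpha$'' is exactly the extra requirement imposed there, and it is preserved by $\square\,j$.)

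One should also record that the functors $-\otimes K$, $\hom(K,-)$, and $\map(X,-)$ land in $\catm^\catc$ and are suitably natural; this is immediate from the entrywise formulas and the fact that a fixed $K\in\sset$ acts via the chosen simplicial structure on $\catm$, which is compatible with all the structure maps of a diagram. Finally, the unit condition ($X\otimes\Delta[0]\cong X$) is inherited entrywise from the simplicial structure on $\catm$. Combining the pushout-product axiom with these formal observations gives that the internal structure makes $\leftmod$ a simplicial model category.
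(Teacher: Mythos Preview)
Your proposal is correct and follows essentially the same route as the paper: reduce SM7 to the identification $\latch{(i\,\square\,j)}\cong \latch{i}\,\square\,j$ (the paper cites \cite[end of 15.3.16]{Phil} for this commutation of colimits), then invoke SM7 in each $\catm_\alpha$ together with Lemma~\ref{acycofs}. Your explicit handling of the $\alpha\notin\catc_0$ acyclicity requirement is a nice touch the paper leaves implicit.
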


\begin{proof}
	Suppose $f:X \to Y$ is a left modified Reedy cofibration in $\catm^{\catc}$ 
	and $j:K \to L$ is a
	cofibration in $\sset$.  One must show the induced map in $\catm^{\catc}$
\[ X \otimes L \coprod_{X \otimes K} Y \otimes K \to Y \otimes L
\]
	is a left modified Reedy cofibration, which is acyclic provided either $f$ or $j$ is acyclic.
	Evaluating at an object $\alpha$ of degree zero, notice every construction is entrywise,
	so one simply has the pushout-product in $\catm_\alpha$ of $f_\alpha$ and $j$.  Thus,
	$\catm_\alpha$ simplicial implies the result is a cofibration which is acylic provided
	either $f_\alpha$ or $j$ is acyclic.
	
	Now, consider an $\alpha$ of non-zero degree.  Then
	the (relative) latching map under consideration
\[ \left[ (X \otimes L \coprod_{X \otimes K} Y \otimes K)_\alpha 
   \coprod_{L_\alpha(X \otimes L \coprod_{X \otimes K} Y \otimes K)} L_\alpha(Y \otimes L)
 \right] \to (Y \otimes L)_\alpha
\]	
	 is isomorphic to the pushout-product in $\catm$ of $\latch{f}$ and the map $j$
\[\left[ (Y_\alpha \otimes K) \coprod_{(L_\alpha Y \coprod_{L_\alpha X} X_\alpha) \otimes K} 
((L_\alpha Y \coprod_{L_\alpha X} X_\alpha) \otimes L)\right] \to Y_\alpha \otimes L	
\]
	 by compatibility of colimits (see \cite[end of 15.3.16]{Phil}).  
	 Since each $\catm_\alpha$ is a simplicial model category, the result
	 is thus a cofibration in $\catm_\alpha$ 
	 which is acyclic whenever either $j$ or $\latch{f}$ is acyclic.
	 Now the claim follows from the definition of cofibration and 
	 Lemma \ref{acycofs}.
\end{proof}

\begin{remark}
\label{notexternal}
	In modified Reedy structures, the external simplicial structure
	will rarely be fully compatible.  The standard reason given is that
	tensoring with the 	
	acyclic cofibration of simplicial sets given by
	`the lowest' $d^0:\Delta[0] \to \Delta[1]$ would require that for every 
	cofibrant $Z \in \catm^\catc$ and $n \in \catc_0$
\[ Z_n \approx (Z \otimes \Delta[0])_n \to 
	(Z \otimes \Delta[1])_n \approx \coprod_{\Delta[1]_n} Z_n
\]
	is a weak equivalence in $\catm_n$, which should rarely hold.	
	This property is important, as it distinguishes $\rightmod$ with
	$\catc=\Delta^{op}$ and $\catc_0=[0]$ from another localization of the standard
	Reedy structure considered by \cite{RSS}, even though they are (indirectly)
	Quillen equivalent (see Rem. \ref{notRSS}).
\end{remark}

The next inheritance question considered involves prolonging strong Quillen pairs and 
Quillen equivalences from the target model category.  
The interesting point here is that one only needs
the Quillen equivalence condition at entries of the subcategory $\catc_0$ in order to deduce
a prolonged Quillen equivalence for left modified structures.

 \begin{prop}
	\label{qpair}
	Suppose $(F,G)$ forms an adjoint pair between $\catm$ and $\caln$, 
	such that they become a strong Quillen pair between the model categories
	$\catm_\alpha$ and $\caln_\alpha$ for each $\alpha \in \catc$.	
	Then their prolongations $(F_*,G_*)$
	induce a strong Quillen pair between 
	$\leftmod$ and $\leftmod[\caln]$ (if both exist).
	If, in addition, $(F,G)$ is a strong Quillen equivalence for each $\alpha \in \catc_0$, 
	then $(F_*,G_*)$ is a strong Quillen equivalence as well.
\end{prop}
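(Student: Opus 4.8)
The plan is to treat the prolonged adjunction entrywise and lean on the characterizations already obtained in Lemmas \ref{acycofs} and \ref{acycfibs}. Recall that $(F_* X)_\alpha = F(X_\alpha)$ and $(G_* Z)_\alpha = G(Z_\alpha)$, with $F_* \dashv G_*$ because the adjunction bijection $\caln^\catc(F_* X, Z) \approx \catm^\catc(X, G_* Z)$ is induced componentwise from $F \dashv G$; in particular the transpose $\psi:X \to G_* Z$ of a map $\phi:F_* X \to Z$ has $\psi_\alpha$ equal to the $F \dashv G$ transpose of $\phi_\alpha$ for every $\alpha$. The one structural fact needed is that the relative latching construction commutes with the left adjoint $F$: since $F$ is cocontinuous, at each $\alpha$ it commutes with the colimit computing $\alofx{X}$ and with the pushout computing $\lofx{f}$, so $\alofx{F_* X} \approx F(\alofx{X})$, $\lofx{F_* f} \approx F(\lofx{f})$, and hence $\latch{F_* f} \approx F(\latch{f})$ as morphisms of $\caln_\alpha$.

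Granting this, $F_*$ preserves left modified Reedy cofibrations: if $\latch{f}$ is a cofibration in $\catm_\alpha$ for each $\alpha$, acyclic whenever $\alpha \notin \catc_0$, then applying the left Quillen functor $F:\catm_\alpha \to \caln_\alpha$ shows $\latch{F_* f} \approx F(\latch{f})$ is a cofibration in $\caln_\alpha$, still acyclic whenever $\alpha \notin \catc_0$, so $F_* f$ is a cofibration in $\leftmod[\caln]$. Likewise, by the characterization of acyclic cofibrations in Lemma \ref{acycofs} applied at both ends, $f$ an acyclic cofibration forces each $\latch{f}$ to be an acyclic cofibration in $\catm_\alpha$, hence each $\latch{F_* f} \approx F(\latch{f})$ to be an acyclic cofibration in $\caln_\alpha$, so $F_* f$ is an acyclic cofibration in $\leftmod[\caln]$. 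Thus $(F_*, G_*)$ is a strong Quillen pair; one could equally well dualize, checking that $G_*$ preserves fibrations and acyclic fibrations via the compatibility of matching objects with $G$ together with Lemma \ref{acycfibs}.

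For the equivalence I will use the standard criterion that a Quillen pair is a Quillen equivalence exactly when, for every cofibrant $X$ in the source and every fibrant $Z$ in the target, a map $\phi:F_* X \to Z$ is a weak equivalence if and only if its transpose $\psi:X \to G_* Z$ is. Here everything is detected on $\catc_0$. By the concluding clauses of Lemmas \ref{acycofs} and \ref{acycfibs}, applied to $\emptyset \to X$ and to $Z \to *$ and using that the initial and terminal objects of each $\catm_\alpha$ and $\caln_\alpha$ agree with those of $\catm$ and $\caln$, the entry $X_\alpha$ is cofibrant in $\catm_\alpha$ and $Z_\alpha$ is fibrant in $\caln_\alpha$ for each $\alpha$. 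Fix $\alpha \in \catc_0$: since $(F, G)$ is a strong Quillen equivalence between $\catm_\alpha$ and $\caln_\alpha$, $X_\alpha$ is cofibrant, $Z_\alpha$ is fibrant, and $\psi_\alpha$ is the transpose of $\phi_\alpha$, we conclude that $\phi_\alpha$ is a weak equivalence in $\caln_\alpha$ if and only if $\psi_\alpha$ is a weak equivalence in $\catm_\alpha$. Since the weak equivalences of $\leftmod$ and $\leftmod[\caln]$ are by definition precisely the maps whose components at objects of $\catc_0$ are weak equivalences in the corresponding $\catm_\alpha$ (resp. $\caln_\alpha$), quantifying over $\alpha \in \catc_0$ gives that $\phi$ is a weak equivalence if and only if $\psi$ is, so $(F_*, G_*)$ is a strong Quillen equivalence.

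I expect the only delicate step to be the first-paragraph identification $\latch{F_* f} \approx F(\latch{f})$ (equivalently, dually, the compatibility of the relative matching construction with $G$): it is exactly the assertion that a cocontinuous functor commutes with the colimits defining absolute latching objects and with the pushout defining the relative latching object, evaluated at each $\alpha \in \catc$, but it deserves to be written out carefully because it is what lets the entrywise Quillen hypotheses propagate to the diagram categories. Everything after that is a bookkeeping reduction to Lemmas \ref{acycofs} and \ref{acycfibs} and to the hypotheses entry by entry.
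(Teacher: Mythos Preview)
Your argument is correct and follows essentially the same route as the paper's proof: use cocontinuity of $F$ to identify $\latch{F_* f}$ with $F(\latch{f})$, then invoke the entrywise Quillen hypothesis together with Lemma~\ref{acycofs} to see $F_*$ preserves (acyclic) cofibrations, and finally verify the Quillen equivalence criterion entrywise on $\catc_0$ using the concluding clauses of Lemmas~\ref{acycofs} and~\ref{acycfibs} to get cofibrancy and fibrancy of the entries. Your write-up is simply more explicit than the paper's, which compresses the latching-commutes-with-$F$ step and the cofibrancy/fibrancy of entries into single-sentence remarks.
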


\begin{proof}
	The left adjoint $F$ must preserve colimits, cofibrations, and acyclic cofibrations as a strong
	left Quillen functor.  In this case, the prolongation $F_*$ will commute with 
	latching constructions and so preserve cofibrations
	by definition and acyclic cofibrations by Lemma \ref{acycofs}. 
	
	For the Quillen equivalence condition, suppose $X$ is cofibrant in 
	$\leftmod$ and
	$Y$ is fibrant in $\leftmod[\caln]$.  Then for each $\alpha \in \catc_0$, 
	$X_\alpha$ is cofibrant in $\catm_\alpha$ by Lemma \ref{acycofs} and 
	$Y_\alpha$ is fibrant in $\caln_\alpha$
	by Lemma \ref{acycfibs}.  Now 
	the claim follows from the definition of weak equivalence and the assumption of $(F,G)$
	a Quillen equivalence for each $\alpha \in \catc_0$.
\end{proof}

\subsection{A Special Case of Inheriting Cofibrant Generation}

The final property whose inheritance is considered is being cofibrantly generated, 
which is clearly \textit{not} a self-dual
condition by its nature.  In fact, it becomes quite technical to pursue this condition
in general, which will be avoided here, so only the minimum necessary for potential
applications to algebraic K-theory will be handled in this subsection.
As a consequence, 
the focus will be on the case of a single cofibrantly generated model category structure on 
$\catm$, with $\catc$ monotone increasing, throughout this subsection.  
As is customary, $I$ will denote
a set of generating cofibrations, and $J$ a set of generating acyclic cofibrations in $\catm$,
but it will also be convenient to make the (often satisfied but) 
non-standard additional assumption that $J \subset I$.  
This is really just
a way of hiding the technical assumption that the domains of $J$ are small with respect to
the subcategory of $I$-cofibrations, as noted near the end of the proof of 
Prop. \ref{increasecofgen}.

Notice each evaluation functor $ev_{\alpha}:\catm^\catc \to \catm$ has a left adjoint, defined by
\[({\bf F}^{\alpha} X)_{\beta}=
\begin{cases}
\sqcup_{\alpha \to \beta} X, &\text{if $\catc(\alpha,\beta) \neq \emptyset$;} \\
\emptyset, &\text{otherwise.}
\end{cases}
\]
with the relevant initial map or summand identity for structure maps.  
Similarly, there are also right adjoints
to evaluations defined as either products of the given object ``before" the chosen entry, 
or the final object otherwise.  Both are instances of the usual Kan extension formula 
\cite[X.3]{MacLane}.

\begin{prop}
	\label{increasecofgen}
	Suppose $\catc$ is monotone increasing and $\catm$ is a 
	cofibrantly generated model category,
	with $\catc_0$ any full subcategory of $\catc$ and $J \subset I$.  
	Then $\leftmod$	
	is a cofibrantly generated model category with 
\[ I_L=\cup_{\alpha \in \catc_0} {\bf F}^\alpha(I) \bigcup 
		\cup_{\alpha \notin \catc_0} {\bf F}^\alpha(J) 
		\text{ and } J_L=\cup_{\alpha \in \catc} {\bf F}^\alpha(J) 
\]	
	as set of generating (acyclic) cofibrations.
\end{prop}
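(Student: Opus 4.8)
The plan is to verify the recognition criterion for cofibrantly generated model categories (as in Hirschhorn \cite[Thm. 11.3.1]{Phil}): having already established in Theorem \ref{modReedy} that $\leftmod$ is a model category, it suffices to show (a) the domains of $I_L$ are small relative to $I_L$-cell complexes and those of $J_L$ are small relative to $J_L$-cell complexes, (b) the $I_L$-injectives are exactly the maps with the right lifting property against all cofibrations (equivalently, the acyclic fibrations), and (c) the $J_L$-injectives are exactly the fibrations. The smallness in (a) reduces to smallness in $\catm$: since $\catc$ is monotone increasing, the functors ${\bf F}^\alpha$ are built from coproducts indexed by hom-sets of $\catc$, and evaluating an $I_L$-cell complex at any object $\beta$ gives a transfinite composite of pushouts of coproducts of maps from $I$ (using that $\catc(\alpha,\beta)$ is a set), so a domain $A$ of ${\bf F}^\alpha(I)$ is small relative to $I_L$-cells because $ev_\beta$ of such a cell complex is an $I$-cell complex in $\catm$ and $A$ was small there; the $J_L$ case is identical with $J$ in place of $I$.

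For (b) and (c) I would exploit the adjunctions $({\bf F}^\alpha, ev_\alpha)$. A map $p$ has the right lifting property against ${\bf F}^\alpha(I)$ if and only if $p_\alpha = ev_\alpha(p)$ has the right lifting property against $I$ in $\catm$, i.e. $p_\alpha$ is an acyclic fibration in $\catm$. Hence $p$ is $I_L$-injective iff $p_\alpha$ is an acyclic fibration for every $\alpha \in \catc_0$ and $p_\alpha$ is a fibration (a $J$-injective) for every $\alpha \notin \catc_0$. I then need to identify this class with the acyclic fibrations of $\leftmod$. One inclusion: if $p$ is an acyclic fibration in $\leftmod$, then by Lemma \ref{acycfibs} every $p_\alpha$ is a fibration in $\catm$ and for $\alpha \in \catc_0$ moreover $\match{p}$ is acyclic; since $\catc$ is monotone increasing the matching objects are final objects, so $\bmofx{p}$ is an isomorphism onto $p_\alpha$ and $p_\alpha$ is an acyclic fibration — giving $I_L$-injectivity. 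Conversely, if $p$ is $I_L$-injective, I want $\match{p}$ to be an (acyclic, when $\alpha\in\catc_0$) fibration in $\catm$ for every $\alpha$; again because the matching objects are trivial, $\match{p} \approx p_\alpha$, so the conditions just read: $p_\alpha$ an acyclic fibration for $\alpha\in\catc_0$ and a fibration for $\alpha\notin\catc_0$, which is exactly $I_L$-injectivity, and this is precisely the characterization of acyclic fibrations from Lemma \ref{acycfibs}. The argument for $J_L$ and fibrations is the same but simpler: $J_L$-injectivity says $p_\alpha$ is a fibration in $\catm$ for all $\alpha$, which by triviality of matching objects is equivalent to $\match{p}$ a fibration for all $\alpha$, i.e. $p$ is a fibration in $\leftmod$ by definition.

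The only remaining subtlety, and the main place the hypotheses $\catc$ monotone increasing and $J \subset I$ really bite, is showing that $J_L$-cell complexes are weak equivalences (so that the two notions of acyclic cofibration agree with the generated ones and the recognition theorem's hypotheses are genuinely met). Here ${\bf F}^\alpha$ preserves colimits and sends the acyclic cofibration $J$ to maps which are entrywise (over $\beta$ with $\catc(\alpha,\beta)\neq\emptyset$) coproducts of maps in $J$, hence acyclic cofibrations in $\catm$; transfinite composites of pushouts of these are acyclic cofibrations entrywise, so every $J_L$-cell is an entrywise acyclic cofibration, in particular a weak equivalence in $\leftmod$. The assumption $J \subset I$ is what guarantees the domains of $J$ are small relative to $I$-cells (equivalently relative to $I_L$-cells after applying $ev_\beta$), closing the smallness requirement for $J_L$ inside the larger class of $I_L$-cofibrations. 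I expect this interplay — checking that $J_L$-injective coincides with fibration and that $J_L$-cells are weak equivalences simultaneously, using monotonicity to collapse all matching data — to be the technical heart, though each individual verification is routine once the triviality of matching objects is invoked.
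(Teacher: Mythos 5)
Your proposal is correct and follows essentially the same route as the paper: both arguments use the $({\bf F}^\alpha, ev_\alpha)$ adjunction together with the triviality of matching objects for monotone increasing $\catc$ (so $\match{p}\approx p_\alpha$) to identify the $I_L$- and $J_L$-injectives with the acyclic fibrations and fibrations via Lemma \ref{acycfibs}, and both reduce smallness to smallness in $\catm$ by evaluating at each $\beta$, with $J \subset I$ supplying the one delicate smallness requirement for the domains of $J$ inside $I_L$. The only cosmetic difference is that you phrase the smallness transfer in terms of $ev_\beta$ of cell complexes being cell complexes, while the paper invokes smallness of the domains of $J$ with respect to the whole class of acyclic cofibrations.
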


\begin{proof}
	First, notice this model structure exists by Thm \ref{modReedy}, since 
	Lemma \ref{increase} says $\catc_0$ is left acceptable.  The
	monotone increasing assumption also implies 
	$ev_\alpha \approx \match{?}$ by $\amofx{?}$ constant on the final object.
	Thus, if these sets
	admit the small object argument, their role as generating (acylic) cofibrations is
	essentially a restatement of Lemma \ref{acycfibs} and the definition of fibrations
	via the $({\bf F}^\alpha,ev_\alpha)$ adjunction.
	
	To see these sets permit the small object argument, notice that ${\bf F}^\beta C$
	is small with respect to a set of maps $S$ in $\catm^\catc$ 
	provided $C$ is small in $\catm$ with respect
	to $ev_\beta S$.  As a consequence, $J_L$ will permit the 
	small object argument provided
	$ev_\beta(J_L)$ consists of acyclic cofibrations in $\catm$, 
	since the domains of the maps in $J$ are small with respect to the whole class of  
	acyclic cofibrations in $\catm$ by \cite[10.5.27]{Phil} (rather than just with respect to the
	relative cell complexes built using $J$).
	However, $ev_\beta {\bf F}^\alpha j$ is either the identity of
	the initial object, or else a coproduct of copies of $j$. 
	Hence, $ev_\beta {\bf F}^\alpha j$ is an acyclic cofibration in $\catm$
	whenever $j \in J$, which suffices.  
		
	The argument for
	$I_L$ allowing the small object argument is similar, but complicated by the fact
	that domains of $J$ need not be small with respect to $I$-cofibrations in general.
	However, this follows from the stronger assumption that $J \subset I$ (and $I$ allows
	the small object argument).
\end{proof}

\begin{remark}
\label{nicecofgen}
In the special case of $\catc=\Delta^{op}$, one of the two initial cases of interest in 
Bousfield-Kan \cite{BK} and then Reedy \cite{Reedy}, a different left adjoint to 
matching objects also allows one to give an explicit
set of generating (acyclic) cofibrations.
\end{remark}

\section{Statements for Right Modified Reedy Structures}
\label{rightmodsec}

Rather than trying to state each definition and result to this point (with the exception 
of Prop. \ref{increasecofgen}) in two parts, this section serves to
include clear statements for anyone working with right modified structures.

\begin{defn}[Right Compatibility Condition] Suppose $\alpha \in \catc$ with
	$\beta \in \latchcat$ and $\gamma \in \matchcat$.  Then
\begin{enumerate}
\item    $Fib(\catm_\gamma) \subset Fib(\catm_\alpha)$
\item   $Cof(\catm_\alpha) \subset Cof(\catm_\gamma)$  
\item   $Cof(\catm_\beta) \subset Cof(\catm_\alpha)$
\item   If both $\alpha, \beta \in \catc_0$ one has 
$Fib(\catm_\alpha) \subset Fib(\catm_\beta)$.
\end{enumerate}
\end{defn}

The analog of example \ref{leftcompex}(2) in this context is as follows.
	Suppose one chooses a monotone decreasing $\catc$ and successively
	colocalize, or take right Bousfield localizations, as the degree increases. 
	Then the class of fibrations remains fixed and the class of cofibrations 
	gradually shrinks as more fibrations are made acyclic, 
	while the latching categories are all empty.  Thus, the Right Compatibility
	Condition would be satisfied in this case.	 
	
	A commutative square remains a special case of potential interest, comparing two 
	colocalizations, but the common target would have degree zero,
	the intermediate objects have degree one, and the common source have degree two.
	Thus, $\catc^-=\catc$ and $\catc^+$ is discrete, with any choice of $\catc_0$
	right acceptable by Lemma \ref{increase}.

\begin{defn}
	Given a Reedy category $\catc$, a right acceptable subcategory $\catc_0 \subset \catc$ 
	and an $ob(\catc)$-indexed family of model structures $\catm_?$ on $\catm$ 
	satisfying the right compatibility condition, the \textbf{right modified Reedy structure} on
	$\catm^\catc$, or $\rightmod$, consists of the classes of:
\begin{itemize}

\item	 weak equivalences, defined as those morphisms $f$ where $f_\alpha$ is a weak equivalence in 
	$\catm_\alpha$ whenever $\alpha \in \catc_0$;
\item  cofibrations, defined as those morphisms $f$ where $\latch{f}$ is a cofibration in 
	$\catm_\alpha$ for each $\alpha$; and
\item  fibrations, defined as those morphisms $f$ where $\match{f}$ is a fibration in 
	$\catm_\alpha$ for each $\alpha$, which must also be
	acyclic whenever $\alpha \notin \catc_0$.	
\end{itemize}
\end{defn}

\begin{lemma}
	Suppose $\catm_?$ satisfies the right compatibility condition 
	(parts  \eqref{cofsincr} and \eqref{fibsdecr}).
	Then the class of fibrations in $\rightmod$ 
	which are also weak equivalences is characterized by $\match{p}$
	an acyclic fibration in $\catm_\alpha$ for each $\alpha$.  Furthermore, any 
	fibration $p$ satisfies $p_\alpha$ a fibration in $\catm_\alpha$ for each $\alpha$.
\end{lemma}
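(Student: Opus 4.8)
The plan is to run the argument dual to the one behind Lemma~\ref{acycofs}, trading latching data and cofibrations for matching data and fibrations, and each $\catm_\alpha$ for $\catm_\alpha^{op}$. The two tools are the factorization $p_\alpha=\bmofx{p}\,\match{p}$, in which $\bmofx{p}$ is a base change of the induced map $\amofx{p}$ on absolute matching objects, and the dual half of Lemma~\ref{liftbysmaller}, which reduces a lifting property against $\amofx{p}$ to the same property against every $\match[\gamma]{p}$ with $\gamma\in\matchcat$. Since base changes of fibrations (and of acyclic fibrations) are again of that kind, while base changes of general weak equivalences need not be, I would route every delicate comparison through the class of acyclic fibrations rather than argue directly with weak equivalences.

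For the ``furthermore'' clause, I would fix $\alpha$ and start from the observation that, for a fibration $p$ in $\rightmod$, each $\match[\gamma]{p}$ with $\gamma\in\matchcat$ is a fibration in $\catm_\gamma$, hence in $\catm_\alpha$ by the containment $Fib(\catm_\gamma)\subseteq Fib(\catm_\alpha)$. Testing against an arbitrary acyclic cofibration of $\catm_\alpha$ and invoking Lemma~\ref{liftbysmaller} then shows $\amofx{p}$, and so its base change $\bmofx{p}$, is a fibration in $\catm_\alpha$, whence $p_\alpha=\bmofx{p}\,\match{p}$ is a composite of fibrations in $\catm_\alpha$.

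For the characterization, one direction is short: if each $\match{p}$ is an acyclic fibration in $\catm_\alpha$ then $p$ is a fibration in $\rightmod$ (the requirement at $\alpha\notin\catc_0$ being automatic), and to see $p$ is a weak equivalence I would fix $\alpha$, use $Cof(\catm_\alpha)\subseteq Cof(\catm_\gamma)$ to promote each acyclic fibration $\match[\gamma]{p}$ of $\catm_\gamma$ to one having the right lifting property against all cofibrations of $\catm_\alpha$, apply Lemma~\ref{liftbysmaller} to conclude that $\amofx{p}$, hence $\bmofx{p}$, is an acyclic fibration of $\catm_\alpha$, and read off that $p_\alpha=\bmofx{p}\,\match{p}$ is a composite of acyclic fibrations. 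For the converse, given a fibration $p$ in $\rightmod$ that is also a weak equivalence, the content is to show $\match{p}$ is acyclic --- hence, being a fibration, an acyclic fibration --- in $\catm_\alpha$ for $\alpha\in\catc_0$, the case $\alpha\notin\catc_0$ being built into the definition. I would induct on $|\alpha|$: when $|\alpha|=0$ one has $\matchcat=\emptyset$ and $\match{p}\approx p_\alpha$, a weak equivalence by hypothesis; for the inductive step the lower-degree objects $\gamma\in\matchcat$ satisfy $\match[\gamma]{p}$ an acyclic fibration of $\catm_\gamma$ by the inductive hypothesis, so the argument of the previous direction makes $\amofx{p}$, hence $\bmofx{p}$, an acyclic fibration of $\catm_\alpha$, and then the $2$-of-$3$ property in $\catm_\alpha$ applied to $p_\alpha=\bmofx{p}\,\match{p}$ (with $p_\alpha$ a weak equivalence since $\alpha\in\catc_0$) forces $\match{p}$ to be one as well.

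I expect the main obstacle to be the bookkeeping in this last induction --- in particular, keeping straight which half of the right compatibility condition transports fibrancy (resp.\ acyclicity) of $\match[\gamma]{p}$ from $\catm_\gamma$ up to $\catm_\alpha$ --- compounded by the need, flagged above, to pass everything through acyclic fibrations because weak equivalences are not stable under base change. As with Lemma~\ref{acycofs} (and unlike Lemma~\ref{acycfibs}), no right acceptable hypothesis should be needed here.
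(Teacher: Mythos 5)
Your argument is correct and is exactly the intended one: the paper states this lemma without proof in section \ref{rightmodsec}, but its proof is the evident dualization of the detailed proof given for Lemma \ref{acycfibs}, and your proposal reproduces that template faithfully --- using part (1) of the right compatibility condition plus Lemma \ref{liftbysmaller} to make $\amofx{p}$, hence its base change $\bmofx{p}$, a fibration for the ``furthermore'' clause, part (2) to upgrade this to an acyclic fibration, and the degree induction with 2-of-3 applied to $p_\alpha=\bmofx{p}\match{p}$ for the converse. You also correctly observe that, unlike the acyclic-cofibration characterization in $\rightmod$, no right acceptable hypothesis is needed because part (2) holds without restricting $\gamma$ to $\catc_0$.
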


In the right modified case, identifying the acyclic 
cofibrations is where the right acceptable condition is necessary.  

\begin{lemma}
	Suppose $\catc_0 \subset \catc$ is right acceptable and $\catm_?$ satisfies the
	right compatibility condition (parts \eqref{fibsincr} and \eqref{cofsdecr}).  Then in $\rightmod$
	the class of cofibrations which are also weak equivalences 
	is characterized by $\latch{f}$ a cofibration in $\catm_\alpha$ for each $\alpha$,
	which must be acyclic if $\alpha \in \catc_0$.  Furthermore, any cofibration $f$ satisfies
	$f_\alpha$ a cofibration in $\catm_\alpha$ for each $\alpha$.
\end{lemma}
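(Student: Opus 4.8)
Since the right modified structure is the formal mirror of the left one, this lemma is exactly dual to Lemma~\ref{acycfibs}, and the plan is to run that argument with matching replaced by latching, fibrations by cofibrations, pullbacks by pushouts, and ``left acceptable'' by ``right acceptable.''  Three inputs are needed: the latching half of Lemma~\ref{liftbysmaller}; the two clauses of the right compatibility condition that refer to the latching category (namely $Cof(\catm_\beta)\subseteq Cof(\catm_\alpha)$ for $\beta\in\latchcat$, and $Fib(\catm_\alpha)\subseteq Fib(\catm_\beta)$ when in addition $\alpha,\beta\in\catc_0$); and the right acceptable hypothesis, which, just as in Lemma~\ref{acycfibs}, is used at exactly one point.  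Recall throughout that $f_\alpha=\latch{f}\circ\blofx{f}$ with $\blofx{f}$ the cobase change of the absolute latching map $\alofx{f}$, so it suffices to control $\alofx{f}$ and $\latch{f}$.

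First I would dispatch the ``furthermore'' clause.  If $f$ has each $\latch{f}$ a cofibration in $\catm_\alpha$, then by $Cof(\catm_\beta)\subseteq Cof(\catm_\alpha)$ every $\latch[\beta]{f}$ with $\beta\in\latchcat$ is a cofibration in $\catm_\alpha$, hence has the left lifting property against an arbitrary acyclic fibration $g$ of $\catm_\alpha$; the latching half of Lemma~\ref{liftbysmaller} then gives $(\alofx{f},g)$ that lifting property, so $\alofx{f}$ is a cofibration in $\catm_\alpha$.  Its cobase change $\blofx{f}$ is one as well, whence $f_\alpha=\latch{f}\circ\blofx{f}$ is a composite of cofibrations in $\catm_\alpha$.

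The crucial step, where right acceptability enters, is the claim that if $f$ has each $\latch{f}$ a cofibration in $\catm_\alpha$ that is moreover acyclic whenever $\alpha\in\catc_0$, then for such $\alpha$ the map $\alofx{f}$, hence its cobase change $\blofx{f}$, is an acyclic cofibration in $\catm_\alpha$.  Fix a fibration $g$ of $\catm_\alpha$.  For $\beta\in\latchcat$ with $\beta\in\catc_0$, the clause $Fib(\catm_\alpha)\subseteq Fib(\catm_\beta)$ makes $g$ a fibration of $\catm_\beta$, against which the acyclic cofibration $\latch[\beta]{f}$ lifts on the left; by the right acceptable hypothesis the absolute latching objects at objects of $\catc_0$, hence the relative latching maps there, agree with those formed inside the Reedy category $\catc_0$, so $\latch[\beta]{f}$ may be identified with its $\catc_0$-counterpart $\latch[\beta]{f}^{\catc_0}$, and Lemma~\ref{liftbysmaller} applied with respect to the smaller indexing category $\catc_0$ gives that $(\altlofx{f},g)$ has the lifting property; a final appeal to right acceptability identifies $\altlofx{f}$ with $\alofx{f}$.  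As $g$ was arbitrary, $\alofx{f}$ is an acyclic cofibration in $\catm_\alpha$, and so is its cobase change $\blofx{f}$.

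It remains to establish the characterization.  For the forward implication, let $f$ be a cofibration that is also a weak equivalence, so each $\latch{f}$ is a cofibration in $\catm_\alpha$ and each $f_\alpha$ with $\alpha\in\catc_0$ is a weak equivalence in $\catm_\alpha$; I would induct on $|\alpha|$ to show $\latch{f}$ is acyclic whenever $\alpha\in\catc_0$.  When $|\alpha|=0$ one has $\latch{f}\approx f_\alpha$, already a weak equivalence.  For the inductive step, the objects $\beta\in\latchcat\cap\catc_0$ have strictly smaller degree, so the inductive hypothesis supplies the input for the preceding paragraph, making $\blofx{f}$ an acyclic cofibration in $\catm_\alpha$; then $f_\alpha=\latch{f}\circ\blofx{f}$ together with the two-out-of-three property in $\catm_\alpha$ forces $\latch{f}$ to be a weak equivalence, hence an acyclic cofibration.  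Conversely, if $\latch{f}$ is a cofibration in $\catm_\alpha$ for every $\alpha$ and acyclic whenever $\alpha\in\catc_0$, then $f$ is a cofibration by definition, while the preceding paragraph applies directly --- no induction is needed, since then $\latch[\beta]{f}$ is acyclic for all relevant $\beta$ --- making $\blofx{f}$ an acyclic cofibration in $\catm_\alpha$ for $\alpha\in\catc_0$, so $f_\alpha=\latch{f}\circ\blofx{f}$ is a composite of acyclic cofibrations and $f$ is a weak equivalence.  The one genuinely delicate point is the identification, in the crucial step, of the relative latching maps at objects of $\catc_0$ with those built inside $\catc_0$; this is precisely what licenses invoking Lemma~\ref{liftbysmaller} over $\catc_0$ rather than over $\catc$, and it is the sole place the right acceptable hypothesis is used, the remainder being a routine transcription of the proof of Lemma~\ref{acycfibs}.
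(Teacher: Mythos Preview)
Your proposal is correct and is precisely the dualization of the paper's proof of Lemma~\ref{acycfibs}, which is exactly what the paper intends (Section~\ref{rightmodsec} gives only statements, noting they are dual to the left modified case). The three-step structure---entrywise cofibration claim via Lemma~\ref{liftbysmaller} and part (3), the crucial acyclicity of $\alofx{f}$ using right acceptability to pass to $\catc_0$ and part (4), and the degree induction invoking two-of-three---mirrors the original argument exactly, with the right acceptable condition entering at the single point you identify.
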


\begin{thm}
	Suppose $\catc_0$ is a right acceptable full subcategory of $\catc$, and $\catm_?$ is a
	family of model structures satisfying the right compatibility condition.  Then 
	$\rightmod$ is a Quillen model category.
\end{thm}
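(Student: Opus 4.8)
\section*{Proof proposal}

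The plan is to run the proof of Theorem~\ref{modReedy} in reverse, interchanging throughout the roles of latching and matching constructions, of cofibrations and fibrations, and of the conditions ``$\alpha \in \catc_0$'' and ``$\alpha \notin \catc_0$'' wherever the definition of the right modified structure forces it. As before, the existence of all small (co)limits is classical, since they are computed entrywise in $\catm^\catc$, and the 2-of-3 property together with closure under retracts for each of the three classes follows from the corresponding facts in every $\catm_\alpha$ and the fact that the latching and matching constructions preserve retracts. The two lemmas stated just above --- the right-modified forms of Lemmas~\ref{acycofs} and~\ref{acycfibs}, whose proofs dualize those of the left-modified case --- furnish the two flexible characterizations one needs: the acyclic fibrations of $\rightmod$ are exactly the $p$ with $\match{p}$ an acyclic fibration in $\catm_\alpha$ for every $\alpha$, while the acyclic cofibrations are exactly the $f$ with $\latch{f}$ a cofibration in $\catm_\alpha$ for every $\alpha$ which is moreover acyclic whenever $\alpha \in \catc_0$. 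It is the latter statement, the right-modified form of Lemma~\ref{acycfibs}, that uses the right acceptable hypothesis, through the natural isomorphism between latching objects relative to $\catc_0$ and those relative to $\catc$.

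For the lifting axioms, first suppose $f$ is an acyclic cofibration and $p$ is a fibration. By the characterization just recalled, $\latch{f}$ is a cofibration in $\catm_\alpha$ for every $\alpha$, acyclic when $\alpha \in \catc_0$; by the definition of fibration, $\match{p}$ is a fibration in $\catm_\alpha$ for every $\alpha$, acyclic when $\alpha \notin \catc_0$. Thus for each $\alpha$ at least one of $\latch{f}$, $\match{p}$ is acyclic, so $(\latch{f},\match{p})$ has the lifting property in $\catm_\alpha$, and Prop.~\ref{liftbylatch} --- which is stated symmetrically, hence applies verbatim --- yields a lift of $f$ against $p$. If instead $f$ is a cofibration and $p$ an acyclic fibration, then $\match{p}$ is an acyclic fibration in $\catm_\alpha$ and $\latch{f}$ a cofibration in $\catm_\alpha$ for every $\alpha$, so $(\latch{f},\match{p})$ again has the lifting property for each $\alpha$ and Prop.~\ref{liftbylatch} applies once more.

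For the factorizations one argues by induction on degree, exactly as in Theorem~\ref{modReedy}, using Lemma~\ref{inductfactor} (which is symmetric in its latching and matching inputs, so applies unchanged). To factor an arbitrary $g:X\to Y$ as a cofibration followed by an acyclic fibration, on the discrete category $\fnc[0]$ one factors each $g_\alpha$ in $\catm_\alpha$ as a cofibration followed by an acyclic fibration, and at the step from $\fnc[n-1]$ to $\fnc$ one factors, for every $\alpha$ of degree $n$, the induced map $\lofx{f}\to\mofx{p}$ in $\catm_\alpha$ as a cofibration $\latch{f}$ followed by an acyclic fibration $\match{p}$; Lemma~\ref{inductfactor} assembles these into a factorization of $\fnc$-indexed diagrams in which $f$ is a cofibration by definition and $p$ is an acyclic fibration by the right-modified form of Lemma~\ref{acycofs}. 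For the acyclic-cofibration/fibration factorization the modification surfaces in the bookkeeping: at each stage one factors $\lofx{f}\to\mofx{p}$ in $\catm_\alpha$ as an acyclic cofibration followed by a fibration when $\alpha \in \catc_0$, and as a cofibration followed by an acyclic fibration when $\alpha \notin \catc_0$ (legitimate, since the acyclicity demanded of $\match{p}$ is only imposed outside $\catc_0$); the resulting $\latch{f}$ is a cofibration everywhere and acyclic inside $\catc_0$, so $f$ is an acyclic cofibration by the right-modified form of Lemma~\ref{acycfibs}, while $\match{p}$ is a fibration everywhere and acyclic outside $\catc_0$, so $p$ is a fibration by definition.

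As in the left-modified case, all the genuinely nontrivial content sits in the two characterization lemmas above and in Prop.~\ref{liftbylatch}; once those are granted the present theorem is pure assembly. The one place that repays a moment's care --- and the main obstacle to writing the argument out cleanly --- is the stage-by-stage bookkeeping just described, namely keeping straight, against the modified definitions, which of $\latch{f}$ and $\match{p}$ is being asked to be acyclic, together with the fact that the two a priori distinct descriptions of the acyclic cofibrations (one from the lifting axioms, one driving the factorization induction) do coincide; that agreement is precisely what the right acceptable hypothesis secures.
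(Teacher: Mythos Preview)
Your proposal is correct and is precisely the dualization the paper intends: Section~\ref{rightmodsec} provides no separate proof for this theorem, instead relying on the evident duality with Theorem~\ref{modReedy}, and your write-up carries out that dualization faithfully, including the correct placement of the two-case split in the acyclic-cofibration/fibration factorization and the single-case factorization for cofibration/acyclic-fibration.
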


\begin{lemma}
	If each model category $\catm_\alpha$ is proper, then 
	$\rightmod$ is proper.
\end{lemma}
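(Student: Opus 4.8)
The plan is to dualize the argument for Lemma~\ref{proper} in the obvious way, relying on the facts that (co)limits in $\catm^\catc$ are formed entrywise and that the two lemmas immediately preceding this statement identify the modified (co)fibrations with maps that are entrywise (co)fibrations. Recall that weak equivalences in $\rightmod$ are detected only at the objects of $\catc_0$, so it suffices to verify the entrywise properness conditions at each $\alpha \in \catc_0$, and these will be inherited from properness of the corresponding $\catm_\alpha$.

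For left properness I would start from a pushout square in $\catm^\catc$ whose left edge $f$ is a cofibration in $\rightmod$ and whose top edge is a weak equivalence. By the right modified analog of Lemma~\ref{acycfibs}, $f_\alpha$ is a cofibration in $\catm_\alpha$ for every $\alpha$; since pushouts in $\catm^\catc$ are computed entrywise, evaluating at any $\alpha \in \catc_0$ yields a pushout square in $\catm_\alpha$ of a weak equivalence along the cofibration $f_\alpha$. As $\catm_\alpha$ is proper, the resulting bottom edge is a weak equivalence in $\catm_\alpha$, and since this holds for every $\alpha \in \catc_0$, the bottom edge is a weak equivalence in $\rightmod$.

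Right properness is handled symmetrically. Given a pullback square whose right edge $p$ is a fibration in $\rightmod$ and whose bottom edge is a weak equivalence, the right modified analog of Lemma~\ref{acycofs} gives that $p_\alpha$ is a fibration in $\catm_\alpha$ for every $\alpha$; pullbacks in $\catm^\catc$ are entrywise, so evaluating at any $\alpha \in \catc_0$ produces a pullback in $\catm_\alpha$ of a weak equivalence along the fibration $p_\alpha$, and properness of $\catm_\alpha$ forces the top edge to be a weak equivalence there. As this holds at every object of $\catc_0$, the top edge is a weak equivalence in $\rightmod$.

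There is essentially no obstacle: the content of the proof is just the correct bookkeeping of which of the two preceding lemmas supplies ``cofibrations are entrywise cofibrations'' and which supplies ``fibrations are entrywise fibrations.'' The only remark worth making is that the argument invokes properness of $\catm_\alpha$ only for $\alpha \in \catc_0$, so the stated hypothesis is slightly stronger than strictly needed, and that the two ``furthermore'' clauses of those lemmas are precisely what is required to pass between the modified (co)fibrations and their entrywise counterparts.
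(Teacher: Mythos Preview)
Your proof is correct and follows exactly the approach the paper uses for the left modified case (Lemma~\ref{proper}): (co)fibrations are entrywise (co)fibrations by the ``furthermore'' clauses of the two preceding lemmas, (co)limits are formed entrywise, and properness is then inherited from each $\catm_\alpha$. Your bookkeeping of which right-modified lemma supplies which entrywise statement is also correct.
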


\begin{prop}
	If $\catm$ has a simplicial structure in which each
	$\catm_\alpha$ is a simplicial model category, 
	then the ``internal" simplicial structure described above
	makes any right modified Reedy structure (which exists) into a simplicial model category.
\end{prop}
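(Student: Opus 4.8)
The plan is to dualize the proof of Proposition~\ref{internalsimp} essentially line for line, replacing the appeal to Lemma~\ref{acycofs} there by the right modified analogue of Lemma~\ref{acycfibs} stated just above. The triple adjunction isomorphisms relating the tensor $\otimes$, cotensor $\hom$, and mapping space $\map$ constructions are purely formal and are inherited directly from the chosen simplicial structure on $\catm$, independently of any model structure, so the only thing requiring verification is the pushout-product axiom: given a right modified Reedy cofibration $f:X\to Y$ in $\catm^\catc$ and a cofibration $j:K\to L$ in $\sset$, the induced map
\[ X\otimes L\coprod_{X\otimes K}Y\otimes K \longrightarrow Y\otimes L \]
is a right modified Reedy cofibration which is acyclic provided either $f$ or $j$ is acyclic.

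First I would dispose of the (unmodified) cofibration claim exactly as in Proposition~\ref{internalsimp}. Right modified Reedy cofibrations are characterized by their relative latching maps being cofibrations in the respective $\catm_\alpha$, with no acyclicity condition, so the argument is identical: at an object of degree zero every construction is entrywise and the relative latching map in question is, up to isomorphism, the pushout-product of $f_\alpha$ and $j$ in the simplicial model category $\catm_\alpha$; at an object of positive degree, compatibility of colimits (see \cite[end of 15.3.16]{Phil}) identifies the relative latching map of the pushout-product with the pushout-product in $\catm_\alpha$ of $\latch{f}$ and $j$. In both cases the pushout-product axiom in $\catm_\alpha$ yields a cofibration.

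For the acyclicity assertion I would invoke the right modified analogue of Lemma~\ref{acycfibs}, which characterizes the cofibrations that are also weak equivalences in $\rightmod$ as those $f$ for which $\latch{f}$ is a cofibration in $\catm_\alpha$ for each $\alpha$ that is moreover acyclic whenever $\alpha\in\catc_0$. Hence it suffices to check, for $\alpha\in\catc_0$, that the relative latching map of the pushout-product is an acyclic cofibration in $\catm_\alpha$ whenever $f$ or $j$ is acyclic. If $j$ is acyclic this is immediate from $\catm_\alpha$ being a simplicial model category. If $f$ is acyclic, then for $\alpha\in\catc_0$ the map $\latch{f}$ (which at degree zero is just $f_\alpha$) is itself an acyclic cofibration in $\catm_\alpha$, again by that analogue of Lemma~\ref{acycfibs}, so its pushout-product with the cofibration $j$ is an acyclic cofibration in $\catm_\alpha$, as required.

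I expect no genuine obstacle, the argument being a routine dualization. The one point worth keeping in mind is that it is precisely the right modified analogue of Lemma~\ref{acycfibs} which carries the use of the right acceptable hypothesis --- available here since the structure is assumed to exist --- and that for objects $\alpha\notin\catc_0$ one never needs the latching map of the pushout-product to be acyclic, only a cofibration, so nothing beyond the right compatibility condition already in force is invoked.
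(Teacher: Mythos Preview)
Your proposal is correct and follows exactly the approach the paper intends: the paper gives no separate proof for the right modified statement, leaving it as the evident analogue of Proposition~\ref{internalsimp}, and your argument---reusing the same latching-map/pushout-product identification from \cite[end of 15.3.16]{Phil} and swapping in the right modified characterization of acyclic cofibrations (the unlabeled analogue of Lemma~\ref{acycfibs} in Section~\ref{rightmodsec})---is precisely that analogue. Your closing remark correctly locates where the right acceptable hypothesis enters.
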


Once again, notice the Quillen equivalence assumption for just the entries in the subcategory
suffices to produce a Quillen equivalence between right modified structures.

 \begin{prop}
	Suppose $(F,G)$ forms an adjoint pair between $\catm$ and $\caln$, 
	such that they become a strong Quillen pair between the model categories
	$\catm_\alpha$ and $\caln_\alpha$ for each $\alpha \in \catc$.	
	Then their prolongations $(F_*,G_*)$
	induce a strong Quillen pair between 
	$\rightmod$ and $\rightmod[\caln]$ (if both exist).
	If, in addition, $(F,G)$ is a strong Quillen equivalence for each $\alpha \in \catc_0$, 
	then $(F_*,G_*)$ is a strong Quillen equivalence as well.
\end{prop}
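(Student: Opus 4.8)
The plan is to run the proof of Prop.~\ref{qpair} with latching and matching constructions interchanged. Since $F$ is a left adjoint it preserves all small colimits, so the prolongation $F_*$ commutes with the latching construction at every $\alpha$ and therefore carries the relative latching map $\latch{f}$ of a morphism $f$ to the relative latching map $\latch{F_*f}$ of $F_*f$ (up to the evident natural isomorphism). Being a strong left Quillen functor $\catm_\alpha \to \caln_\alpha$ for each $\alpha$, $F$ sends cofibrations to cofibrations and acyclic cofibrations to acyclic cofibrations. Hence $F_*$ sends right modified Reedy cofibrations to right modified Reedy cofibrations directly from the definition, and it sends acyclic cofibrations to acyclic cofibrations by the characterization in the lemma above (the right-modified analogue of Lemma~\ref{acycfibs}), which describes them as those $f$ with each $\latch{f}$ a cofibration in $\catm_\alpha$ that is acyclic whenever $\alpha \in \catc_0$ --- a condition visibly preserved by applying $F$ entrywise to latching maps. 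So $(F_*,G_*)$ is a strong Quillen pair between $\rightmod$ and $\rightmod[\caln]$ whenever both exist.

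For the Quillen equivalence statement, fix $X$ cofibrant in $\rightmod$ and $Y$ fibrant in $\rightmod[\caln]$. By the ``furthermore'' clause of the lemma above characterizing acyclic cofibrations (the analogue of Lemma~\ref{acycfibs}), $X_\alpha$ is cofibrant in $\catm_\alpha$ for every $\alpha$, and by the ``furthermore'' clause of the lemma characterizing acyclic fibrations (the analogue of Lemma~\ref{acycofs}), $Y_\alpha$ is fibrant in $\caln_\alpha$ for every $\alpha$. Given $F_*X \to Y$ with transpose $X \to G_*Y$, these are weak equivalences in the respective modified structures precisely when they are weak equivalences at each $\alpha \in \catc_0$; since the adjunction transpose is computed entrywise, this is for each such $\alpha$ the comparison of $FX_\alpha \to Y_\alpha$ with $X_\alpha \to GY_\alpha$ between a cofibrant object of $\catm_\alpha$ and a fibrant object of $\caln_\alpha$. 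The hypothesis that $(F,G)$ is a strong Quillen equivalence at every $\alpha \in \catc_0$ then identifies the two conditions, so $(F_*,G_*)$ is a strong Quillen equivalence.

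There is no real obstacle here beyond keeping track of which characterization is being used: everything reduces, through the statements already recorded for right modified structures, to the entrywise hypotheses on $(F,G)$. The one point to state carefully is that $F_*$ genuinely commutes with the latching construction, which holds simply because $F$ preserves colimits; the right compatibility condition enters only indirectly, through the lemmas above, exactly as the left compatibility condition entered the proof of Prop.~\ref{qpair} only through Lemmas~\ref{acycofs} and \ref{acycfibs}.
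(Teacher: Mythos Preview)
Your proof is correct and follows essentially the same approach as the paper's proof of Prop.~\ref{qpair}, dualized appropriately for the right modified structures; the paper gives no separate proof for this statement, merely recording it in the section of dual statements. One small remark: your opening line says you will ``interchange latching and matching constructions,'' but in fact you keep working with $F_*$ and latching maps (which is fine, since cofibrations and acyclic cofibrations in the right modified structure are still characterized via latching maps); the strictly dual argument would instead verify that $G_*$ commutes with matching constructions and hence preserves (acyclic) fibrations, but either route works and yours is arguably closer in spirit to the original.
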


\section{Modified Projective Structures}

Another familiar fact is that the standard Reedy structure is Quillen equivalent to 
the projective (or diagram) model structure on $\catm^{\catc}$ when both exist.  
In order to generalize this fact, one first needs to introduce modified projective structures, 
after a small technical digression.  

\begin{remark}
	It is easy to show the intersection of the three distinguished classes in a model 
	category are precisely the isomorphisms, characterized as those $f$ where $(f,f)$ has
	the lifting property.  This is useful to keep in mind when working with various `trivial'
	model category structures.  For example, together with the lifting properties, it implies
	weak equivalences in a model category are precisely the isomorphisms if and only if 
	all maps are both cofibrations and fibrations, hence a rigidity result for the most 
	commonly used trivial model structure.
\end{remark}

\begin{defn}
	\label{lopsidedtriv}
	Let $\catm_\emptyset$ denote the (co)complete 
	category $\catm$ equipped with the following rather 
	trivial model category structure.  All maps are both fibrations and weak equivalences,
	while the cofibrations are simply the isomorphisms.  In fact, this is cofibrantly generated
	with the empty set of generating (acyclic) cofibrations, hence the notation. 
\end{defn}

Notice there is also a dual trivial model structure on any (co)complete category, with all maps acyclic cofibrations and with fibrations characterized as the isomorphisms, but it would be naturally fibrantly, rather than cofibrantly, generated.

Next is the existence theorem for modified projective structures.  Such structures can be
used in various places to provide flexibility in comparing model structures.  
As one example, in \cite{JY} it is shown that (with a fixed model structure on the 
target category)
a modified projective structure on colored 
PROPs is Quillen equivalent to the usual projective structure on colored operads, 
hence the full projective model structure on PROPs is in some sense a refinement of the 
projective structure on operads.  The current definition is more general than is common, 
in allowing various different model structures rather than a fixed one for $\catm$.   
The point is to be able to generalize the usual close
relationship between Reedy and projective model structures, as well as allowing much more
flexibility in studying the homotopy theory of diagrams. 

\begin{defn}
	\label{othercompat}
	Say a collection of model structures $\catm_\alpha$ on a fixed $\catm$ has fibrations
	which decrease along the indexing subcategory $\catc_0$ if $\catc_0(\beta,\alpha)$ 
	non-empty implies $Fib(\catm_\alpha) \subset Fib(\catm_\beta)$.
\end{defn}

Notice in this case the acyclic cofibrations increase along the subcategory, or
$AcycCofs(\catm_\beta) \subset AcycCofs(\catm_\alpha)$ if $\catc_0(\beta,\alpha)$ non-empty, 
by the lifting characterization with respect to fibrations.  
If $\catc$ is monotone increasing, this condition is implied by the left compatibility 
condition, since $\catc_0(\beta,\alpha)$ non-empty and $\beta \neq \alpha$ must then 
imply $\beta \in \latchcat$.

The proof below essentially comes from \cite[11.6.1]{Phil}, 
but is included mainly for convenience and
to clarify notation.  Here $I_\alpha$ will indicate the set of generating cofibrations for
$\catm_\alpha$, and similarly with $J_\alpha$ the generating acyclic cofibrations, which 
conflicts with the notation of \cite{Phil}.
 
\newcommand{\projmod}{\textbf{Proj}(\catc_0,\catm^\catc)}

\begin{prop}
	\label{modproj}
	Suppose each $\catm_\alpha$ is a cofibrantly generated model category, and the
	collection has fibrations which decrease along the indexing subcategory $\catc_0$.
	Then there is a modified projective 
	model structure $\projmod$	
	on $\catm^{\catc}$, with fibrations (resp. weak equivalences) defined as those maps sent to
	fibrations (resp. weak equivalences) by each $ev_\alpha$ with $\alpha \in \catc_0$.
	Furthermore, the sets of generating (acyclic) cofibrations are  
\[ I_{\catc_0}=\cup_{\alpha \in \catc_0} {\bf F}^\alpha(I_\alpha) 
		\text{ and } J_{\catc_0}=\cup_{\alpha \in \catc_0} {\bf F}^\alpha(J_\alpha)  . 
\]		
\end{prop}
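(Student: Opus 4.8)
The plan is to apply Kan's recognition theorem for cofibrantly generated model categories (e.g.\ \cite[Thm.~11.3.1]{Phil}) to the proposed sets $I_{\catc_0}$ and $J_{\catc_0}$, so that the bulk of the work becomes identifying the classes of maps with the right lifting property against these sets, checking smallness, and verifying the cell-complex condition. First, $\catm^\catc$ is complete and cocomplete with (co)limits formed entrywise, and the proposed class $W$ of weak equivalences (those $f$ with $ev_\alpha f$ a weak equivalence in $\catm_\alpha$ for each $\alpha \in \catc_0$) inherits the two-out-of-three property and closure under retracts from the corresponding properties in each $\catm_\alpha$. Using the $({\bf F}^\alpha,ev_\alpha)$ adjunction, a map $p$ lies in $J_{\catc_0}\text{-inj}$ exactly when $ev_\alpha p$ has the right lifting property against $J_\alpha$ for every $\alpha \in \catc_0$, i.e.\ exactly when $ev_\alpha p$ is a fibration in $\catm_\alpha$ for each such $\alpha$; likewise $p \in I_{\catc_0}\text{-inj}$ iff $ev_\alpha p$ is an acyclic fibration in $\catm_\alpha$ for each $\alpha \in \catc_0$. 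Hence the identity $I_{\catc_0}\text{-inj} = J_{\catc_0}\text{-inj}\cap W$ follows at once from the two-out-of-three property together with the characterization of acyclic fibrations in each $\catm_\alpha$, and in particular the fibrations of the resulting structure are precisely those described in the statement.

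Next I would check smallness of the domains of $I_{\catc_0}$ and $J_{\catc_0}$: as recorded in the proof of Prop.~\ref{increasecofgen}, ${\bf F}^\alpha C$ is small in $\catm^\catc$ with respect to a class $S$ whenever $C$ is small in $\catm$ with respect to $ev_\alpha S$, and the domains of $I_\alpha$ and $J_\alpha$ are small in $\catm_\alpha$ by cofibrant generation; this lets the small object argument run for both sets.

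The main obstacle is verifying that every relative $J_{\catc_0}$-cell complex lies in $W$ (and, more easily, in $I_{\catc_0}\text{-cof}$), and this is exactly where the monotonicity hypothesis is used. For a generator ${\bf F}^\beta j$ with $\beta \in \catc_0$ and $j \in J_\beta$, and any $\alpha \in \catc_0$, the map $ev_\alpha({\bf F}^\beta j)$ is either an identity of the initial object or a coproduct of copies of $j$ indexed by $\catc(\beta,\alpha) = \catc_0(\beta,\alpha)$. Since fibrations decrease along $\catc_0$, the acyclic cofibrations increase, so $j$ — an acyclic cofibration in $\catm_\beta$ — is an acyclic cofibration in $\catm_\alpha$ whenever $\catc_0(\beta,\alpha)$ is non-empty, and a coproduct of acyclic cofibrations is again one. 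Because $ev_\alpha$ preserves colimits, it carries a relative $J_{\catc_0}$-cell complex to a transfinite composite of pushouts of coproducts of such maps, hence to an acyclic cofibration in $\catm_\alpha$, and in particular to a weak equivalence; thus the cell complex lies in $W$. That it also lies in $I_{\catc_0}\text{-cof}$ reduces, via the adjunction, to the fact that each $j \in J_\beta$ has the left lifting property against acyclic fibrations in $\catm_\beta$. Finally $I_{\catc_0}\text{-inj} \subseteq W \cap J_{\catc_0}\text{-inj}$ is immediate since acyclic fibrations are both fibrations and weak equivalences. With all hypotheses of the recognition theorem in place, the model structure exists with the stated generating sets, the subtle point being precisely the cell-complex condition handled above by the hypothesis that fibrations decrease along $\catc_0$.
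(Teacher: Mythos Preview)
Your proof is correct and essentially the same as the paper's: both identify the $I_{\catc_0}$- and $J_{\catc_0}$-injectives via the $({\bf F}^\alpha, ev_\alpha)$ adjunction and use the hypothesis that fibrations decrease along $\catc_0$ to show each $ev_\alpha {\bf F}^\beta j$ is an acyclic cofibration in $\catm_\alpha$, whence relative $J_{\catc_0}$-cell complexes are weak equivalences. The only cosmetic difference is that the paper packages this as a transfer along the free/forgetful adjunction from the product model category $\prod_{\alpha\in\catc_0}\catm_\alpha\times\prod_{\alpha\notin\catc_0}\catm_\emptyset$ on $\catm^{\catc^{\text{disc}}}$ (invoking \cite[11.3.2]{Phil}) rather than verifying Kan's recognition theorem \cite[11.3.1]{Phil} directly as you do.
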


\begin{proof}
	First, notice \cite[Props. 7.1.7 and 11.1.10]{Phil} 
\[ \prod_{\alpha \in \text{Ob}(\catc_0)} \catm_\alpha \times 
\prod_{\alpha \notin \text{Ob}(\catc_0)} \catm_\emptyset
\] 	is itself a cofibrantly generated model category, with generating sets
\[I_1=\cup_{\alpha \in \catc}(I_\alpha \times \prod_{\beta \neq \alpha}1_\beta)
	\text{ and } 
	J_1=\cup_{\alpha \in \catc}(J_\alpha \times \prod_{\beta \neq \alpha}1_\beta)
\]
	where $1_\beta$ is the identity of the initial object of $\catm_\beta$.
	If ${\bf F}$ is the left adjoint to the
	forgetful functor ${\bf U}:\catm^\catc \to \catm^{\catc^{\text{disc}}}$, then	
	the image of the generating cofibrations 
	${\bf F}(I_1)=I_{\catc_0}$ since $I_\alpha$ is empty for $\alpha \notin \catc_0$ by 		
	construction and similarly $J_{\catc_0}={\bf F}(J_1)$.	
	Thus, it will suffice to show one can lift this model structure
	from $ \catm^{\catc^{\text{disc}}}$ to $\catm^\catc$	
	over the adjoint pair $({\bf F}, {\bf U})$ to complete the proof.  
	
	Now notice these sets allow the small object argument, 
	just as for $J_L$ in the proof of \ref{increasecofgen}.  Also, if $\beta \in \catc_0$,
	$ev_\beta (j)$ for $j \in J_{\catc_0}$ is an acyclic cofibration in $\catm_\beta$. 
	This follows since 
	$ev_\beta {\bf F}^\alpha (j_\alpha)=\coprod_{\catc_0(\alpha,\beta)} j_\alpha$ or 
	$1_\beta$.  By construction, this is an acyclic cofibration in $\catm_\alpha$,
	so by the assumption of fibrations decreasing along the subcategory $\catc_0$, 
	an acyclic cofibration in $\catm_\beta$.
	As a consequence, ${\bf U}$ takes relative $J_{\catc_0}$-cell complexes to
	weak equivalences, and one can apply \cite[11.3.2]{Phil}.
\end{proof}

\begin{remark}
\label{sameincrease}
If $\catc$ is monotone increasing, the relative matching maps are isomorphic to the entries
by triviality of the absolute matching objects (as limits over empty categories).  Hence,
one has $\leftmod$ isomorphic (not just equivalent) to $\projmod$, since they have precisely the 
same fibrations and weak equivalences.  This is well-known for the standard Reedy and
projective structures, in this language the case $\catc_0=\catc$.
\end{remark}

Now one has the anticipated comparison result.  

\begin{thm}
	\label{samehtpy}
	Suppose $\catm_\alpha$ is a collection of model category structures
	and $\catc$ is a Reedy category equipped with a choice of full subcategory $\catc_0$.
\begin{itemize}
\item  If both structures exist, then the identity $1:\rightmod \to \leftmod$ is the right half of a
	strong Quillen equivalence.
\item  If the structures exist and $\beta \in \matchcat$ implies 
	$Fib(\catm_\beta) \subset Fib(\catm_\alpha)$, 
	then the identity $1:\leftmod \to \projmod$ (resp. 
	$1:\rightmod \to \projmod$) is the right half of a strong Quillen equivalence.
\end{itemize}
\end{thm}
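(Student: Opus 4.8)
The plan is to verify each bullet by checking that the identity functor (and its inverse) preserves the relevant halves of the model structures, and then to apply a standard criterion: an adjoint pair in which the left adjoint preserves cofibrations and acyclic cofibrations, and which detects weak equivalences on fibrant (or cofibrant) objects, is a Quillen equivalence. Since in all three structures $\leftmod$, $\rightmod$, $\projmod$ the weak equivalences are defined by the \emph{same} family of evaluation functors $ev_\alpha$ with $\alpha\in\catc_0$, the weak equivalences literally coincide; so each comparison is an identity functor between model structures with the same weak equivalences, and the content is entirely about comparing (co)fibrations. Such an identity adjunction $1\colon \catm_1 \rightleftarrows \catm_2\colon 1$ is automatically a Quillen equivalence as soon as it is a Quillen pair, because the derived unit/counit are identities on the common homotopy category. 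Thus the only real work is to identify, in each case, which direction is a Quillen pair.

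For the first bullet: the cofibrations of $\rightmod$ are defined by $\latch{f}$ a cofibration in $\catm_\alpha$ for every $\alpha$ (no acyclicity clause), while the cofibrations of $\leftmod$ additionally require $\latch{f}$ acyclic when $\alpha\notin\catc_0$; hence every $\leftmod$-cofibration is a $\rightmod$-cofibration, i.e. the identity $1\colon\rightmod\to\leftmod$ sends cofibrations to cofibrations. For fibrations, the fibrations of $\leftmod$ are defined by $\match{f}$ a fibration in $\catm_\alpha$ for every $\alpha$, while those of $\rightmod$ additionally require acyclicity when $\alpha\notin\catc_0$; so every $\leftmod$-fibration is a $\rightmod$-fibration, meaning the same identity $1\colon\rightmod\to\leftmod$ sends fibrations (in $\leftmod$) back to fibrations (in $\rightmod$) — wait, that is the wrong direction, so instead one checks the left adjoint $1\colon\rightmod\to\leftmod$ on cofibrations and acyclic cofibrations: acyclic $\rightmod$-cofibrations are characterized by $\latch{f}$ an acyclic cofibration in $\catm_\alpha$ for all $\alpha$ (the right-modified analog of Lemma \ref{acycofs}), which is exactly the characterization of acyclic $\leftmod$-cofibrations in Lemma \ref{acycofs}; so the left adjoint preserves both, and since the weak equivalences agree, it is a Quillen equivalence. (Equivalently, all four distinguished classes of acyclic maps coincide, and only the non-acyclic cofibrations/fibrations differ.)

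For the second bullet, under the hypothesis $\beta\in\matchcat \Rightarrow Fib(\catm_\beta)\subset Fib(\catm_\alpha)$ one can run the argument in the first paragraph of the proof of Lemma \ref{acycfibs} to conclude that a $\leftmod$-fibration $p$ has each $p_\alpha$ a fibration in $\catm_\alpha$; in particular $ev_\alpha$ of a fibration is a fibration, so the identity $1\colon\leftmod\to\projmod$ sends fibrations to fibrations. Dually — using the decomposition $p_\alpha=\bmofx{p}\match{p}$, the fact (Lemma \ref{acycfibs}) that acyclic $\leftmod$-fibrations have each $\match{p}$, hence each $\bmofx{p}$, an acyclic fibration, and $\match{p}$ itself an acyclic fibration for $\alpha\in\catc_0$ — one gets $p_\alpha$ an acyclic fibration for $\alpha\in\catc_0$, so acyclic $\leftmod$-fibrations go to acyclic $\projmod$-fibrations; thus $1\colon\leftmod\to\projmod$ is a right Quillen functor, and since weak equivalences again coincide it is a Quillen equivalence. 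The statement for $1\colon\rightmod\to\projmod$ then follows by composing with the first bullet, or by the analogous computation with $\match{f}$. The main obstacle is the second bullet's use of the extra fibration hypothesis: one must check that the relevant step of Lemma \ref{acycfibs} — the part showing $\bmofx{p}$ and hence $p_\alpha$ inherits fibrancy from the $\match{}$'s — actually goes through for \emph{all} $\alpha$ (not just those in $\catc_0$) under the stated condition rather than under full left compatibility, which is precisely why that condition is isolated in the hypothesis; everything else is bookkeeping about which acyclicity clauses appear in which definition.
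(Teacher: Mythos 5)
Your overall strategy is the paper's: all three structures share the same class of weak equivalences, so it suffices to check that the identity is a right Quillen functor in the indicated direction, and your treatment of the second bullet (isolating the entrywise-fibration portion of Lemma \ref{acycfibs}, which uses only the stated hypothesis on fibrations) is exactly what the paper does. However, your verification of the first bullet contains a genuine error. You misidentify $1:\rightmod\to\leftmod$ as the left adjoint (the statement makes it the right half, so the left adjoint is $1:\leftmod\to\rightmod$), and you then assert that the acyclic cofibrations of $\rightmod$ are characterized by $\latch{f}$ being an acyclic cofibration for every $\alpha$. That is false: the relevant lemma of section \ref{rightmodsec} characterizes the acyclic \emph{fibrations} of $\rightmod$ by every $\match{p}$ being an acyclic fibration, whereas the acyclic cofibrations of $\rightmod$ only require $\latch{f}$ to be acyclic when $\alpha\in\catc_0$. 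Consequently your parenthetical claim that all four classes of acyclic maps coincide is wrong --- one has only the strict containments (acyclic cofibrations of $\leftmod$ inside those of $\rightmod$, acyclic fibrations of $\rightmod$ inside those of $\leftmod$) --- and the containment your set-up actually needs, namely that acyclic $\rightmod$-cofibrations land among acyclic $\leftmod$-cofibrations, fails in general.

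The repair is shorter than what you wrote. Since a fibration of $\rightmod$ has every $\match{f}$ a fibration \emph{and} acyclic off $\catc_0$, while a fibration of $\leftmod$ requires only the first condition, the containment of fibration classes is immediate from the definitions; combined with the coincidence of weak equivalences (an acyclic fibration being a fibration that is also a weak equivalence), this makes $1:\rightmod\to\leftmod$ a right Quillen functor, and the Quillen-equivalence conclusion follows as in your first paragraph. With that correction, your argument for the second bullet (and the observation that $1:\rightmod\to\projmod$ follows by composing the two right Quillen functors) agrees with the paper's proof.
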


\begin{proof}
	In each case, the model structures being compared have the same class of weak
	equivalences, so it suffices to show the identity preserves fibrations when considered
	as a functor in the appropriate direction.  For the first claim, this follows from the definitions
	and for the second claim this follows from the entrywise fibration portion of 
	Lemma \ref{acycfibs},
	which requires only this one part of the compatibility assumption.
\end{proof}

\begin{remark}
Keeping in mind that the three structures considered
in Thm. \ref{samehtpy} exist under different technical assumptions, 
this result might be viewed as providing alternative
existence criteria for a convenient model of the common homotopy category.   The large
number of different model structures which may be constructed by these methods should
make this additional flexibility quite useful.

Notice $\projmod$ obviously inherits the right proper condition, since pullbacks, fibrations,
and weak equivalences are defined in terms of (certain) entries.  
Once cofibrations are shown to be preserved by evaluations in $\catc_0$ as before by
considering the generating cofibrations, $\projmod$ also inherits the left proper condition since pushouts are also defined entrywise.  For the ``internal" simplicial structure
each entry of the pullback-product construction for diagrams
is isomorphic to the pullback-product construction for that entry (see \cite[11.7.3]{Phil}) and so 
$\projmod$ will also be simplicial when each $\catm_\alpha$ is a simplicial model category.
As for modified Reedy structures, the ``external" simplicial structure will rarely be fully 
compatible with modified projective structures.
\end{remark}

Next is the rather appealing fact that $\catm^{\catc_0}$ really determines the
homotopy theory of $\leftmod$. Similar results hold for $\rightmod$ and $\projmod$ 
as well, although forgetful functors are normally not strong left Quillen functors for 
modified projective 
structures.  In some sense, the proposition says $\leftmod$ is
essentially just lifting the standard Reedy structure from $\catm^{\catc_0}$, without any
of the technical conditions on the target model category normally associated 
with lifting techniques.

\begin{prop}
	\label{restrict}
	If $\catc_0$ is left acceptable, then the forgetful functor 
	$U:\leftmod$ to $\leftmodup[0]$ (the standard Reedy structure on the 
	smaller diagram category) is the right half of a strong Quillen
	equivalence.  If $\catc_0$ is acceptable, then $U$ is also the left half of a
	strong Quillen equivalence
\end{prop}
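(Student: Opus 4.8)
The plan is to exhibit the adjoint pair and verify the Quillen (equivalence) conditions by reducing everything to statements about the already-constructed model structures via the latching/matching machinery. Let $i:\catc_0\to\catc$ be the inclusion, so that $U=i^*:\catm^\catc\to\catm^{\catc_0}$ has left adjoint $\mathbb L=\mathbb L_i$ and right adjoint $\mathbb R=\mathbb R_i$ given by Kan extension. For the first claim I would show $(\mathbb L, U)$ is a strong Quillen pair with $U$ on the right: the target $\leftmodup[0]$ carries the \emph{standard} Reedy structure (a single model structure $\catm$ on all entries), so its fibrations and acyclic fibrations are detected by matching maps $\match[\gamma]{-}$ for $\gamma\in\catc_0$. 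One checks $U$ preserves fibrations and acyclic fibrations directly: for $\gamma\in\catc_0$ the matching object $M_\gamma$ computed in $\catc_0$ agrees with the one computed in $\catc$ by the left acceptable hypothesis (Definition \ref{acceptdef}), so $\match[\gamma]{Up}\approx\match[\gamma]{p}$, which is a (acyclic) fibration in $\catm_\gamma$ whenever $p$ is a left modified Reedy fibration (acyclic fibration, using Lemma \ref{acycfibs}). Hence $U$ is right Quillen.

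For the equivalence part I would use the characterization of Quillen equivalences via derived unit and counit. The counit $\mathbb L\, U\, Z\to Z$ should be shown to be a weak equivalence in $\leftmod$ for cofibrant $Z$; since weak equivalences in $\leftmod$ are detected only at objects of $\catc_0$, and $(\mathbb L\, U\, Z)_\alpha\to Z_\alpha$ is an isomorphism for $\alpha\in\catc_0$ (left Kan extension along a full subcategory inclusion restricts back to the identity on that subcategory — this is the content of the motivating discussion near Definition \ref{latchcat}, where the unit $\mathbb L_i\, i^*X\to X$ is the identity away from the one extra object), this counit is in fact an isomorphism at every entry of $\catc_0$, so certainly a weak equivalence. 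Dually, for fibrant $Y\in\catm^{\catc_0}$ one needs $U$ applied to a fibrant replacement of $\mathbb L Y$ to be a weak equivalence onto $Y$; but again restriction to $\catc_0$ of $\mathbb L Y$ is $Y$ itself, and fibrant replacement in $\leftmod$ is an entrywise weak equivalence at the $\catc_0$-entries by Lemma \ref{acycfibs} combined with the factorization used in Theorem \ref{modReedy}. Packaging these, $(\mathbb L,U)$ satisfies the derived-unit/counit criterion, so it is a strong Quillen equivalence.

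For the second claim, assume $\catc_0$ is acceptable, i.e.\ also \emph{right} acceptable, so that latching objects relative to $\catc_0$ and to $\catc$ agree at objects of $\catc_0$. I would then show $U$ is additionally a \emph{left} Quillen functor, i.e.\ $U$ preserves cofibrations and acyclic cofibrations; this is exactly the dual of the argument above, using Lemma \ref{acycofs} in place of Lemma \ref{acycfibs} and the right-acceptable identification $\latch[\gamma]{Uf}\approx\latch[\gamma]{f}$ for $\gamma\in\catc_0$. Then $U$, being both left and right Quillen and inducing an equivalence on homotopy categories (already established from the first part), is the left half of a strong Quillen equivalence as well, with right adjoint $\mathbb R$; that the pair $(U,\mathbb R)$ is a Quillen \emph{equivalence} follows because $U$ reflects and preserves weak equivalences between cofibrant-fibrant objects, which is immediate from the entrywise description of weak equivalences at $\catc_0$-entries.

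The step I expect to be the main obstacle is not the preservation of fibrations or cofibrations — those are bookkeeping on latching/matching objects once the acceptable hypotheses are invoked — but rather making the derived unit/counit argument fully rigorous without circularity: one must be careful that ``fibrant replacement in $\leftmod$ is an entrywise weak equivalence at $\catc_0$'' genuinely follows from the constructed factorizations and Lemma \ref{acycfibs}, and that the left-Kan-extension-restricts-to-identity claim, which in the excerpt is only spelled out for the one-extra-object case, extends by an induction along degree to a general full subcategory inclusion. Handling that induction cleanly — essentially observing that $\mathbb L_i$ can be built one object at a time through the intermediate full subcategories, at each stage changing only the single new entry via a latching-type colimit — is where the real care is needed.
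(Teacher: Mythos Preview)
Your approach is essentially the paper's, and it is correct, but you are working harder than necessary in two places.

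First, the paper observes that $U$ preserves \emph{all} weak equivalences, not merely those between fibrant objects: weak equivalences in $\leftmod$ are \emph{defined} by evaluation at objects of $\catc_0$, so restriction to $\catc_0$ tautologically preserves (and reflects) them. With this in hand one may invoke the form of the criterion in \cite[Cor.~1.3.16]{Hovey}: it suffices to show the \emph{derived} unit $X\to U Q\,\mathbb L X$ is a weak equivalence for cofibrant $X$, and since $U$ preserves all weak equivalences this reduces to the honest unit $X\to U\mathbb L X$. That unit is an isomorphism simply because $\catc_0\hookrightarrow\catc$ is fully faithful (this is \cite[Cor.~X.3.3]{MacLane}), so no degree induction of the sort you flag as the ``main obstacle'' is needed. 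Your separate verification of the counit is then redundant, and the inductive construction of the Kan extension one object at a time is unnecessary scaffolding.

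Second, a small slip: in your derived unit/counit discussion you have swapped the cofibrancy and fibrancy hypotheses (the counit criterion wants $Z$ fibrant with a cofibrant replacement inserted, the unit criterion wants $Y$ cofibrant). Here it does no harm, since the unit is an isomorphism and the counit is an isomorphism on $\catc_0$-entries regardless of any (co)fibrancy assumption, but you should state the criterion correctly. Also, $\leftmodup[0]$ still carries the family $\catm_\alpha$ restricted to $\catc_0$, not literally a single model structure; your matching-map argument goes through unchanged. For the second claim the paper, like you, simply dualizes: right acceptability gives $\latch[\gamma]{Uf}\approx\latch[\gamma]{f}$ so $U$ preserves cofibrations, and the same Hovey-style argument (now with the counit $URX\to X$, again an isomorphism by full faithfulness) finishes.
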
 

\begin{proof}
	Recall the left Kan extension formula gives a left adjoint $L$ to the forgetful functor
	$U:\catm^\catc \to \catm^{\catc_0}$.  The left acceptable condition says that the forgetful
	functor preserves fibrations in this context, and it preserves (in fact it reflects)
	weak equivalences (between fibrant objects) by definition.  Thus, it is a strong Quillen
	pair which, by \cite[Cor.1.3.16]{Hovey}, is a Quillen equivalence provided the
	derived unit $X \to UQLX$ is a weak equivalence for each cofibrant object 
	$X \in \leftmodup[0]$, where $Q$ indicates a fibrant replacement.  Since $U$
	preserves all weak equivalences, it is enough to instead consider the unit of
	adjunction $X \to ULX$, which is an isomorphism by $\catc_0$ a full subcategory
	(see \cite[Cor. X.3.3]{MacLane}).
	
	If, in addition, $\catc_0$ is right acceptable, then the forgetful functor also preserves
	cofibrations by construction, so the dual argument applies.
\end{proof}

Next is an observation about a special case, which allows one to recover the homotopy 
theory of the original category within the context of a diagram category in many instances.  
This should be particularly useful in combination with choosing appropriate 
(co)localizations for the different 
$\catm_\alpha$, or for simplicial objects over a model category which is not cofibrantly generated,
such as the Str{\o}m structure \cite{Strom} on topological spaces.

\begin{remark}
	\label{zero}
	Suppose $\catc_0$ consists of a singleton $\beta$ which taken alone is 
	left (resp. right) acceptable
	and has no non-trivial endomorphisms, e.g. where $|\beta|=0$ as in  
	\ref{acceptexs}\eqref{singleton}.  
	Then $({\bf F}^\beta,ev_\beta)$ yields a strong Quillen equivalence
	between $\catm_\beta$ and $\leftmod$ (resp. $\rightmod$ or
	$\projmod$).
\end{remark}	

One application is related to the construction of \cite{RSS}, the current result being
somewhat more general, but much weaker by missing the key property for their application.  
Keep in mind that $\fnc[0]$ is always an acceptable subcategory.

\begin{cor}
	\label{simpsame}
	For any model category $\catm$, there are two different model structures, 
	$\leftmod$ and $\rightmod$ with $\catc_0=[0]$, on
	the simplicial objects $\catm^{\Delta^{op}}$ for which $(const,ev_0)$ and
	$(ev_0,R_0)$ each form
	a strong Quillen equivalence with $\catm$.  If, in addition, $\catm$ is cofibrantly
	generated, then a third is given by $\projmod$.
\end{cor}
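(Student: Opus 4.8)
The plan is to deduce all three parts of Corollary \ref{simpsame} as special cases of the general machinery already developed, with $\catc=\Delta^{op}$ and $\catc_0=[0]$, using the constant family $\catm_?=\catm$. First I would record that $[0]=\fnc[0]$ is acceptable (stated just before the corollary, and contained in Example \ref{acceptexs}(1)), so both the left compatibility condition and the right compatibility condition are satisfied trivially for the constant family; hence Thm. \ref{modReedy} and its dual produce $\leftmod$ and $\rightmod$. These are genuinely different model structures since their cofibrations differ (the left modified one has more acyclic cofibrations forced at objects outside $\catc_0$, dually for fibrations), which I would note parenthetically rather than belabor, perhaps pointing back to Remark \ref{notexternal} for the contrast with \cite{RSS}.

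Next I would handle the Quillen equivalences. For $\leftmod$: the single object $0$ of $\Delta^{op}$ has degree $0$ and no non-trivial endomorphisms, and $[0]$ taken alone is left acceptable, so Remark \ref{zero} applies directly to give that $({\bf F}^0, ev_0)$ is a strong Quillen equivalence between $\catm_0=\catm$ and $\leftmod$. I would then observe that ${\bf F}^0$ is exactly the constant diagram functor $const$ in the case $\catc=\Delta^{op}$, $\alpha=0$, since $\Delta^{op}(0,\beta)$ is non-empty for every $\beta$ and the formula ${\bf F}^\alpha X$ reduces (after identifying the coproducts indexed by the unique-up-to-the-structure-maps data) to the constant diagram — more carefully, ${\bf F}^0$ is left adjoint to $ev_0$, and so is $const$ when $0$ is initial-like in the relevant sense, so they agree up to natural isomorphism. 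For $\rightmod$: dually, Remark \ref{zero} gives that $(ev_0, R_0)$ is a strong Quillen equivalence between $\catm$ and $\rightmod$, where $R_0$ is the right adjoint to $ev_0$ supplied by the Kan extension formula. Finally, for the third structure, when $\catm$ is cofibrantly generated the modified projective structure $\projmod$ exists by Prop. \ref{modproj} (the constant family trivially has fibrations decreasing along $\catc_0$), and again Remark \ref{zero} — which explicitly covers the $\projmod$ case — yields that $({\bf F}^0, ev_0)$ is a strong Quillen equivalence with $\catm$.

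The main obstacle I anticipate is not any deep argument but rather the bookkeeping identification of the abstractly-defined adjoints ${\bf F}^0$, $ev_0$, $R_0$ with the concrete functors $const$, $ev_0$, $R_0$ named in the statement, and making sure the hypotheses of Remark \ref{zero} (that the singleton is left, resp. right, acceptable and has no non-trivial endomorphisms) are verified cleanly in the $\Delta^{op}$ case; once $const$ is recognized as left adjoint to $ev_0$ this is immediate by uniqueness of adjoints. I would also take care to state that the two Reedy-type structures coincide with the projective one only up to the comparison of Thm. \ref{samehtpy}, not on the nose, so that "three different model structures" is the honest count when $\catm$ is cofibrantly generated (by Remark \ref{sameincrease} one should note $\Delta^{op}$ is \emph{not} monotone in either direction, so no accidental isomorphism collapses the count). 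The whole proof should then be about a paragraph of prose citing Thm. \ref{modReedy}, its dual, Prop. \ref{modproj}, and Remark \ref{zero}.
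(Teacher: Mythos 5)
Your proposal is correct and follows the same route the paper intends: the corollary is stated without proof precisely because it is a direct application of Remark \ref{zero} together with the preceding observation that $\fnc[0]$ is acceptable, which is exactly how you argue. Your additional care in identifying ${\bf F}^0$ with $const$ (via $[0]$ being initial in $\Delta^{op}$) and in checking the hypotheses of Thm. \ref{modReedy}, its dual, and Prop. \ref{modproj} only makes explicit what the paper leaves implicit.
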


It may be helpful to recall the right adjoint $R_0$ to $ev_0$ can be written explicitly as a 
power object related to the cosimplicial set $\Delta([0],?)$.  
Here each degeneracy is built from a product of diagonals and identities, 
while the face map $d_i$ comes from projection to those factors whose indices lie in 
the image of $d^i$.

\begin{remark}
	\label{notRSS}
	While the structures considered here are compatible with the internal simplicial
	structure whenever $\catm$
	itself is simplicial, the different localization of the Reedy structure considered by \cite{RSS}
	is always simplicial in Quillen's `external' structure.  Hence, their
	structure provides a simplicial model category Quillen equivalent to $\catm$ 
	whenever it exists, even if $\catm$ itself were not simplicial.   Thus 
	$\rightmod$ and the localization of the Reedy structure considered by \cite{RSS}
	may provide the first
	interesting example of two localizations of the same model structure (the standard
	Reedy structure on simplicial objects) which are (indirectly) strongly Quillen equivalent 
	but rarely coincide (as $\rightmod$ is not simplicial in the `external' structure
	in most cases as discussed in Rem. \ref{notexternal}).
\end{remark}

\section{An Enrichment of Algebraic K-theory for a pointed model category}
\label{enrich}

One complication for this section is the problem of deciding how to define `finiteness' so
as to avoid the Eilenberg swindle, or to keep the notion of algebraic K-theory non-trivial.
Working with the cofibrant, homotopy finite objects, following \cite{Sagave}, requires some
hypotheses to be sure one is working with a Waldhausen category.  The advantage is that
the structure is uniquely associated to the model category.  Working with a choice of complete
Waldhausen subcategory, following \cite{DS1}, is quite flexible, but a canonical choice is
only given for stable model categories.  Here the latter will be pursued, although the reader
is warned that the resulting algebraic K-theory space could, at this point, 
depend on more choices than just the underlying pointed model category.

\subsection{Dugger-Shipley Approach to Finiteness}

First, a brief review of the relevant details from \cite{DS1}.  
Given a subcategory $\catu$ of
$\catm$,  let $\catubar$ denote the full subcategory of $\catm$ consisting of cofibrant objects
weakly equivalent to objects of $\catu$.  If all objects of $\catu$ are cofibrant, call $\catu$
\textit{complete} if $\catu=\catubar$.  A \textit{Waldhausen subcategory} of $\catm$ will
denote a pointed (i.e. including the zero object)
full subcategory $\catu$ of cofibrant objects which is closed under homotopy 
pushouts, in the sense that the pushout $P$ formed in $\catm$ of
\diagramit{
A \ar[d]_{f} \ar[r]^g & B \ar[d] \\
C \ar[r] & P 
}
lies in $\catu$ provided $A, B, C \in \catu$ and at least one of $f$ or $g$ is a cofibration.

It is then shown in \cite{DS1} that for a strong Quillen equivalence 
$L:\catm \leftrightarrow \caln:R$ and a complete Waldhausen subcategory $\catu$ of $\catm$,
$\overline{L\ \catu}$ is a complete Waldhausen subcategory of $\caln$.  In fact, they go
on to show the induced map is an isomorphism on algebraic K-theory, and that result could
be recovered as an application of the theory which follows.

The following slight extension of \cite[Prop. 3.6]{DS1} 
is implicit in \cite[Rem. 3.7 \& Lemma A.1]{DS1}.

\begin{prop}
	\label{DSimage}
	Given Quillen equivalent model categories $\catm$ and $\caln$, together with a
	complete Waldhausen subcategory $\catu \subset \catm$, there is an `image'
	complete Waldhausen subcategory $\catv \subset \caln$ and weakly equivalent
	algebraic K-theory spaces $K(\catu)$ and $K(\catv)$. 
\end{prop}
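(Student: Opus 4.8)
The plan is to chain together a strong Quillen equivalence between $\catm$ and $\caln$ with the factorization of an arbitrary Quillen equivalence into a zig-zag of strong Quillen equivalences, and then apply the already-cited transfer result from \cite{DS1} at each stage. First I would recall that ``Quillen equivalent'' means there is a finite zig-zag of strong Quillen equivalences
\[ \catm = \catm_0 \leftrightarrow \catm_1 \leftrightarrow \cdots \leftrightarrow \catm_r = \caln,
\]
so by induction on the length $r$ it suffices to treat a single strong Quillen equivalence $L:\catm \leftrightarrow \caln:R$. For that base case, the statement recalled just above from \cite{DS1} says precisely that if $\catu \subset \catm$ is a complete Waldhausen subcategory, then $\overline{L\,\catu} \subset \caln$ is again a complete Waldhausen subcategory; set $\catv = \overline{L\,\catu}$. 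The cited result of \cite{DS1} further asserts that the induced map on algebraic K-theory $K(\catu) \to K(\catv)$ is an isomorphism (in particular a weak equivalence), which is exactly the conclusion wanted in this one-step case.

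For the inductive step, given the zig-zag above, one applies the base case to the first strong Quillen equivalence to produce a complete Waldhausen subcategory $\catu_1 \subset \catm_1$ with $K(\catu) \simeq K(\catu_1)$, then feeds $\catu_1$ into the next strong Quillen equivalence, and so on; composing the resulting weak equivalences of K-theory spaces along the zig-zag gives $K(\catu) \simeq K(\catv)$ with $\catv = \catu_r \subset \caln$. The only subtlety is that a strong Quillen equivalence may point either way ($L$ could be the left adjoint going $\catm \to \catm_1$ or going $\catm_1 \to \catm$); but the \cite{DS1} result is symmetric in the sense that it applies to transport complete Waldhausen subcategories across a strong Quillen equivalence in either direction (using the left adjoint of whichever orientation is available, together with the fact that the two model categories are strongly Quillen equivalent), so both cases are handled identically.

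The main obstacle I expect is not the inductive bookkeeping but confirming that the transfer statement and the K-theory weak equivalence in \cite{DS1} are genuinely available in the form needed — i.e. that \cite[Prop. 3.6, Rem. 3.7, Lemma A.1]{DS1} together yield, for a single strong Quillen equivalence, both the complete-Waldhausen-subcategory conclusion and the K-theoretic weak equivalence, regardless of which direction the Quillen functors point. Assuming that, as the statement asserts it is ``implicit'' there, the proof is essentially a short induction. I would write it as: reduce to one strong Quillen equivalence by induction on zig-zag length; in that case cite \cite{DS1} for both halves of the claim; note the argument is direction-insensitive; and conclude.
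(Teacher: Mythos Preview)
Your proposal is correct and matches the paper's approach: the paper gives no independent proof, stating only that the proposition is ``implicit in \cite[Rem.~3.7 \& Lemma A.1]{DS1}'' as a slight extension of \cite[Prop.~3.6]{DS1}, and your induction on zig-zag length together with the single-step transfer from \cite{DS1} is precisely how one unpacks that citation. You also correctly flag the one genuine issue---that in a zig-zag the left adjoint may point the wrong way for a direct application of \cite[Prop.~3.6]{DS1}---and this is exactly what \cite[Lemma A.1]{DS1} is invoked to handle, so your identification of the obstacle and its resolution lines up with the paper's intent.
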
	

Once again, if the model category $\catm$ is stable, there is a canonical choice of complete
Waldhausen subcategory, coming from the compact objects (defined with respect to the 
triangulated homotopy category of $\catm$).  If both $\catm$ and $\caln$ are stable, with $\catu$
the compact objects of $\catm$ and $L:\catm \to \caln$, then $\overline{L\ \catu}$ agrees 
with the compact objects
of $\caln$ by \cite[Cor. 3.9]{DS1}, so any syzygy of Quillen equivalences will be compatible
with the canonical choice of compact objects.  However, without the assumption of stability,
the situation remains less clear.

\subsection{The $T_\bullet$ bisimplicial model category}

Throughout this subsection, the categories under consideration will be $\leftmodup[n,m]$
with $\catc_{n,m}$ as in Ex. \ref{acceptexs}(3) and
$\catc_0$ chosen to be the combination of the first row and first column.  The idea is
to use Thomason's variant of Waldhausen's construction to build a bisimplicial model category
from which one recovers the algebraic K-theory of $\catu$ by choosing the set (at least after
an appropriately large choice of universe) of cofibrant objects in $\leftmodup[n,m]$
with entries in $\catu$.  This
choice of universe business
is reasonably convenient now, since no significant cardinality arguments were
involved in developing $\leftmodup[n,m]$.  One may object that the assumption that model
categories contain all small (co)limits forbids us from changing the universe here, but within
this one section it is instead convenient to expand to Quillen's original assumption,
that only finite (co)limits are necessary.

When working with Waldhausen's $S_\bullet$ construction, it is technically important that the 
simplicial face and degeneracy maps are all exact functors.  Recall that a left adjoint is an
exact functor precisely when it preserves the distinguished classes of cofibrations and of
weak equivalences.  Thus, the natural generalization
of this condition to a bisimplicial model category would be that each face and degeneracy
map is a strong left Quillen functor.  In fact, even more is true here, as each face and degeneracy
map is also a strong right Quillen functor.  As a consequence, the entire bisimplicial 
structure descends to the level of homotopy categories, all of the bisimplicial structure maps
preserve arbitrary weak equivalences as well as all homotopy (co)limits, and restricting to 
strong left (or right) Quillen functors as maps still yields a bisimplicial model category. 
Notice this ability is technically vital here, since the stated goal is to recover
a bisimplicial set by restricting to objects in $\leftmodup[n,m]$ with entries in $\catu$ that are
in addition cofibrant.  There would be no reason to expect cofibrancy to be preserved
by all of these face and degeneracy maps
without something akin to the fact that the structure maps are all strong left Quillen functors.

First, the required construction of `extra degeneracy functors' for a categorical nerve.

\begin{defn}
	\label{extradegens}
	Suppose $\catm$ is a category with both initial ($\emptyset$) and final 
	($*$) objects.  Then in the
	categorical nerve with $\mathbb N_n(\catm) = Fun([n],\catm)$ there are two 
	additional functors $\overline s_{-1}, \overline s_{n}:N_{n-1}(\catm) \to N_n(\catm)$
	given by 
\begin{align*}
&\overline s_{-1}(X_0 \to X_1 \to \dots \to X_{n-1}) = 
	\emptyset \to X_0 \to X_1 \to \dots \to X_{n-1} \text{ and } \\
&\overline s_{n}(X_0 \to X_1 \to \dots \to X_{n-1}) = 
	X_0 \to X_1 \to \dots \to X_{n-1} \to {*}
\end{align*}	
\end{defn}
 
 Now, the following purely categorical observation, that does not seem to be well-known, 
 establishes the requisite underlying adjunctions.
 
 \begin{lemma}
 	\label{simpadjt}
 	If $\catm$ is a category with both initial and final objects, then 
	the simplicial category $\mathbb N_n(\catm)$ (or categorical nerve)
	has the property that for $0 \leq i \leq n-1$ both $(d_i,s_i)$ and $(s_i,d_{i+1})$,
	in addition to $(\overline s_{-1},d_0)$ and $(d_n,\overline s_n)$, form adjoint
	pairs as indicated.	
	Furthermore, the only new entries introduced by any of these functors
	are the initial and final objects.
 \end{lemma}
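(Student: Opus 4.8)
The plan is to reduce the statement to two standard facts: the elementary combinatorics of the coface and codegeneracy maps of finite ordinals, and the observation that precomposition turns an adjunction of small categories into an adjunction of functor categories \emph{requiring no limits or colimits in the target}. Write $d^i\colon[n-1]\to[n]$ for the order-preserving injection omitting $i$ and $s^i\colon[n]\to[n-1]$ for the order-preserving surjection with $s^i(i)=s^i(i+1)=i$; then the relevant structure maps of the categorical nerve are the precomposition functors $d_i=(d^i)^*$ and $s_i=(s^i)^*$ between $\mathbb N_n(\catm)=\mathrm{Fun}([n],\catm)$ and $\mathbb N_{n-1}(\catm)$. The repeated ingredient is: if $u\dashv v$ with $u$ the left adjoint, then the comma category $u\downarrow b$ has the terminal object $(vb,1)$, so $\mathrm{Lan}_u$ exists (using no colimits) and equals $v^*$; hence $v^*\dashv u^*$. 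Keep in mind this sends the left adjoint of a pair to the \emph{right} adjoint of the new pair and reverses the order of a chain of adjunctions.

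First I would produce the interior pairs $(d_i,s_i)$ and $(s_i,d_{i+1})$ for $0\le i\le n-1$. By the previous paragraph it suffices to establish the poset-level triples $d^{i+1}\dashv s^i\dashv d^i$ between $[n-1]$ and $[n]$. This is elementary: $s^i$ is a monotone surjection all of whose fibres are singletons except $(s^i)^{-1}(i)=\{i,i+1\}$, and $d^{i+1}$, $d^i$ are exactly its minimum-preimage and maximum-preimage sections; for a monotone surjection between linearly ordered sets the minimum-preimage section is left adjoint and the maximum-preimage section is right adjoint. (Equivalently, $d^{i+1}(j)\le k\iff j\le s^i(k)$ and $s^i(k)\le j\iff k\le d^i(j)$, each verified by a short case check on the positions of $j$ and $k$ relative to $i$.) Stringing the triples together gives the poset chain $d^n\dashv s^{n-1}\dashv d^{n-1}\dashv\cdots\dashv s^0\dashv d^0$, and applying $(-)^*$, which reverses the chain and each adjunction, yields $d_0\dashv s_0\dashv d_1\dashv\cdots\dashv s_{n-1}\dashv d_n$ --- precisely the interior pairs asserted.

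Next I would treat the two extra degeneracies, the sole place where the hypothesis on $\catm$ is used. The map $d^0\colon[n-1]\to[n]$, $j\mapsto j+1$, has no left adjoint as a poset map, yet $\mathrm{Lan}_{d^0}$ still exists: at a spot $k\ge1$ the comma category $d^0\downarrow k$ has a terminal object and contributes $X_{k-1}$, while $d^0\downarrow 0$ is empty and contributes the initial object $\emptyset$. Thus $\mathrm{Lan}_{d^0}X=(\emptyset\to X_0\to\cdots\to X_{n-1})=\overline s_{-1}X$, and $\mathrm{Lan}_{d^0}\dashv(d^0)^*=d_0$ supplies the pair $(\overline s_{-1},d_0)$. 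Dually, $\mathrm{Ran}_{d^n}$ exists using only the final object $*$ (for the empty limit at the top spot), equals $\overline s_n$, and supplies $d_n=(d^n)^*\dashv\overline s_n$. Splicing these onto the ends of the previous chain produces the full adjoint string $\overline s_{-1}\dashv d_0\dashv s_0\dashv\cdots\dashv s_{n-1}\dashv d_n\dashv\overline s_n$, containing every pair claimed.

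The final ``furthermore'' then follows by inspection: each $d_i=(d^i)^*$ and $s_i=(s^i)^*$ merely reindexes its input and so introduces no new entry, while $\overline s_{-1}$ and $\overline s_n$ insert exactly the single new entries $\emptyset$ and $*$. I do not anticipate a real obstacle; the one point needing care is the variance bookkeeping --- that $(-)^*$ simultaneously reverses the order of the adjoint chain and swaps left with right in each link --- so that the poset chain written with descending superscripts reassembles as $d_0\dashv\cdots\dashv d_n$ in the stated order. Verifying $n=1$ by hand (the arrow category, with string $\overline s_{-1}\dashv d_0\dashv s_0\dashv d_1\dashv\overline s_1$) is a fast sanity check that all the directions line up.
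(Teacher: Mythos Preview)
Your argument is correct and takes a genuinely different route from the paper's. The paper verifies each adjoint pair by hand, exhibiting in each case the explicit bijection of hom-sets as an equivalence between two shapes of commutative ladder diagram (e.g.\ showing that a map $X\to s_i Y$ in $\mathbb N_n(\catm)$ is the same data as a map $d_i X\to Y$ in $\mathbb N_{n-1}(\catm)$, and similarly for the remaining cases). Your approach instead packages all of the interior pairs at once by first establishing the poset-level adjoint string $d^n\dashv s^{n-1}\dashv\cdots\dashv s^0\dashv d^0$ in $\Delta$ and then invoking the general principle that an adjunction $u\dashv v$ of indexing categories induces $v^*\dashv u^*$ on diagram categories via the degenerate (terminal-object) case of the pointwise Kan extension formula; the two extra degeneracies then drop out as the genuine Kan extensions $\mathrm{Lan}_{d^0}$ and $\mathrm{Ran}_{d^n}$, whose only nontrivial (co)limits are the empty ones supplied by $\emptyset$ and $*$. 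What your approach buys is economy and a structural explanation---the full adjoint string $\overline s_{-1}\dashv d_0\dashv s_0\dashv\cdots\dashv d_n\dashv\overline s_n$ emerges in one stroke, and it is transparent exactly where the initial and final objects are consumed. What the paper's approach buys is self-containment: it avoids any appeal to Kan extensions or comma-category terminology, at the cost of several repetitive diagram verifications.
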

 
 \begin{proof}
 	First recall that $d_i$ is defined by removing the object $i$, through composition if
	$i$ is neither $0$ nor $n$.  In the same manner, $s_i$ for $0 \leq i \leq n-1$
	is defined by inserting the identity on the $i$-th object. 
	In all cases, it is straightforward to verify the adjoint property directly.
	For example, to see that $(d_i,s_i)$ and
	$(s_i,d_{i+1})$ for $0 \leq i \leq n-1$	are adjoint pairs between 
	$\mathbb N_{n}(\catm)$ and $\mathbb N_{n-1}(\catm)$, observe that
	the existence of a commutative diagram of the form
\diagramit{
X_0 \ar[d] \ar[r] & \dots \ar[r] & X_{i-1} \ar[d] \ar[r] & X_i \ar[d] \ar[r] & X_{i+1} \ar[d] \ar[r] 
	& X_{i+2} \ar[d] \ar[d] \ar[r] & \dots \ar[r] &X_{n}\ar[d] 	\\
Y_0 \ar[d] \ar[r] &  \dots \ar[r] &Y_{i-1} \ar[d] \ar[r] & Y_i \ar[d] \ar[r]^{=} & Y_i \ar[d] \ar[r] 
	& Y_{i+1} \ar[d] \ar[r] & \dots \ar[r] & Y_{n-1}\ar[d] \\
Z_0 \ar[r] &  \dots \ar[r] & Z_{i-1} \ar[r] & Z_i \ar[r] & Z_{i+1} \ar[r] & Z_{i+2}  \ar[r] & \dots \ar[r] & Z_n
}
	where the second row corresponds to $s_i(Y)$, is equivalent to the existence
	of a commutative subdiagram
\diagramit{
X_0 \ar[d] \ar[r] & \dots \ar[r] & X_{i-1} \ar[d] \ar[r] & X_{i+1} \ar[d] \ar[r] & X_{i+2} \ar[d] \ar[r] 
	& \dots \ar[r] & X_{n}\ar[d] \\
Y_0 \ar[d] \ar[r] &  \dots \ar[r] &Y_{i-1} \ar[d] \ar[r] & Y_i \ar[d] \ar[r] & Y_{i+1} \ar[d] \ar[r] 
	& \dots \ar[r] & Y_{n-1}\ar[d] \\
Z_0 \ar[r] &  \dots \ar[r] & Z_{i-1} \ar[r] & Z_i \ar[r] & Z_{i+2} \ar[r] & \dots \ar[r] & Z_n
}
	where the top row corresponds to $d_i(X)$ and the bottom row corresponds to 
	$d_{i+1}(Z)$.

	To see that $(d_n,\overline s_n)$ forms an adjoint pair, observe
	the existence of a commutative diagram of the form
\diagramit{
Y_0 \ar[d] \ar[r] & Y_1 \ar[d] \ar[r] & \dots \ar[r] &Y_{n-2} \ar[d] \ar[r]
	&Y_{n-1} \ar[d] \ar[r] & Y_n \ar[d] \\
Z_0 \ar[r] & Z_1 \ar[r] & \dots \ar[r] & Z_{n-2} \ar[r] & Z_{n-1} \ar[r] & {*}
}
	where the bottom row corresponds to 
	$\overline s_n(Z)$, is equivalent to the existence
	of a commutative subdiagram
\diagramit{
Y_0 \ar[d] \ar[r] & Y_1 \ar[d] \ar[r] & \dots \ar[r] &Y_{n-2} \ar[d] \ar[r] & Y_{n-1} \ar[d] \\
Z_0 \ar[r] & Z_1 \ar[r] & \dots \ar[r] & Z_{n-2} \ar[r] & Z_{n-1}
}
	where the top row corresponds to $d_n(Y)$.	
	
	Similarly, to see that $(\overline s_{-1},d_0)$ forms an adjoint pair, observe
	the existence of a commutative diagram of the form
\diagramit{
{\emptyset} \ar[d] \ar[r] & Y_0 \ar[d] \ar[r] & Y_1 \ar[d] \ar[r] & \dots \ar[r] &Y_{n-1} \ar[d] \\
Z_0 \ar[r] & Z_1 \ar[r] & Z_2 \ar[r] & \dots \ar[r] & Z_n
}
	where the top row corresponds to 
	$\overline s_{-1}(Y)$, is equivalent to the existence
	of a commutative subdiagram
\diagramit{
Y_0 \ar[d] \ar[r] & Y_1 \ar[d] \ar[r] & \dots \ar[r] & Y_{n-1} \ar[d] \\
Z_1 \ar[r] & Z_2 \ar[r] & \dots \ar[r] & Z_{n}
}
	where the bottom row corresponds to $d_0(Z)$.	
\end{proof}
 
 Given a model category of diagrams $\catm^\catc$ and a small
 subcategory $\catu \subset \catm$, let
 $ev_{\catu}$ denote the set of diagrams whose entries all lie 
 in $\catu$, while $ev_{\catu}^{cof}$ is the subset of such diagrams which are also cofibrant
 in $\catm^\catc$. 
  
\begin{thm}
	\label{bisimp}
	For any pointed model category $\catm$, there is a bisimplicial (pointed) model 
	category $\leftmodup[n,m]$ where 
	for each $n$ (and choice of horizontal or vertical) the structure maps for
	$0 \leq i \leq n-1$ both $(d_i,s_i)$ and $(s_i,d_{i+1})$,
	in addition to $(\overline s_{-1},d_0)$ and $(d_n,\overline s_n)$, form strong
	Quillen pairs as indicated.		
	Furthermore, for any (small) complete Waldhausen subcategory $\catu$ inside $\catm$,
	applying $ev_{\catu}^{cof}$ everywhere yields a bisimplicial set which is a model
	for the algebraic K-theory of $\catu$.
\end{thm}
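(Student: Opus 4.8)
The plan is to build the structures $\leftmodup[n,m]$, identify the bisimplicial operators and check that they are strong Quillen functors, and finally compare the resulting bisimplicial set with Thomason's construction.

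\emph{Existence of the structures.} For each $(n,m)$ the grid $\catc_{n,m}$ of Example \ref{acceptexs}(3) is monotone increasing, so the chosen $\catc_0$ (the first row together with the first column) is left acceptable by Lemma \ref{increase}, and the constant family $\catm_? = \catm$ satisfies the left compatibility condition vacuously: clauses \eqref{fibsincr} and \eqref{cofsdecr} of Definition \ref{compat} concern matching categories, which are empty since $\catc^-$ is discrete, while clauses \eqref{cofsincr} and \eqref{fibsdecr} are automatic for a constant family. Thus Theorem \ref{modReedy} yields $\leftmodup[n,m]$, which is pointed because $\catm$ is, via the constant zero diagram. Since $\catc_{n,m}$ is monotone increasing, Remark \ref{sameincrease} identifies it with the modified projective structure, so its fibrations and weak equivalences are detected by evaluation at the objects of $\catc_0$, while Lemmas \ref{acycofs} and \ref{acycfibs} describe the (acyclic) (co)fibrations via the relative latching and matching maps at every object.

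\emph{The bisimplicial structure and the Quillen property.} The assignment $([n],[m]) \mapsto \catm^{[n]\times[m]}$ is the twofold categorical nerve: freezing $m$, one has $\catm^{[n]\times[m]} = \mathbb N_n(\catm^{[m]})$ with $\catm^{[m]}$ pointed, so Lemma \ref{simpadjt} supplies in this direction the adjoint pairs $(d_i,s_i)$ and $(s_i,d_{i+1})$ for $0\le i\le n-1$, together with $(\overline s_{-1},d_0)$ and $(d_n,\overline s_n)$, and records that the only new entries any of these functors create are zero objects; the $m$-direction is symmetric. It remains to show that each of these functors, in the role dictated by its place in its adjoint pair, is a strong Quillen functor between the relevant left modified Reedy structures. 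Since a functor is strong left Quillen exactly when its right adjoint preserves fibrations and acyclic fibrations, and since both classes in $\leftmodup[n,m]$ are detected entrywise at the objects of $\catc_0$, the check unwinds into two bookkeeping tasks: first, each structure functor composed with an evaluation is again an evaluation, or an evaluation at a newly inserted zero object, on which every fibration and acyclic fibration is the identity of the zero object, hence an isomorphism; second, each structure functor carries a relative latching map to a relative latching map at the (possibly re-indexed) corresponding object, or to the identity of a zero object, by compatibility of the defining colimit with insertion of zeros from Lemma \ref{simpadjt}. Making the re-indexing of $\catc_0$ under the boundary faces $d_0$, $d_n$ and the extra degeneracies match the entrywise detection of weak equivalences is precisely what dictates the choice of $\catc_0$ as the first row plus the first column, and I expect this combinatorial verification to be the main obstacle in the theorem.

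\emph{Recovering algebraic K-theory.} By the latching description of cofibrations, a cofibrant object of $\leftmodup[n,m]$ is, up to the freedom in choosing its interior entries, determined by its restriction to $\catc_0$: a cofibrant object of $\catm$ at the corner together with the two cofibration sequences forming the first row and first column, every interior entry being an iterated homotopy pushout of that data. When $\catu$ is a complete Waldhausen subcategory it contains the zero object and is closed under homotopy pushouts, so these interior entries lie in $\catu$ as soon as the $\catc_0$-data does; hence $ev_{\catu}^{cof}(\leftmodup[n,m])$ is identified, compatibly with all the nerve operators, with the $(n,m)$-cells of Thomason's $T_\bullet$-style bisimplicial set for $\catu$, up to the weak equivalence that forgets the choice of interior entries. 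One then invokes Thomason's comparison that this bisimplicial set models the algebraic K-theory space of the Waldhausen category $\catu$. The delicate secondary points are the contractibility of the space of such choices, so that forgetting it is a levelwise --- hence realization --- weak equivalence, and the identification of the ``acyclic cofibration off $\catc_0$'' condition with the homotopy-pushout condition in Thomason's construction; having every structure map available as a strong left (and right) Quillen functor is what keeps the passage between the two descriptions compatible with the entire bisimplicial structure.
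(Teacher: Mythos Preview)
Your existence paragraph is correct and matches the paper. The two substantive steps each have a real gap.

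\textbf{Quillen pairs.} Your uniform criterion ``right adjoint preserves fibrations and acyclic fibrations'' fails precisely for the pair $(\overline s_{-1},d_0)$. An acyclic fibration $p$ in $\leftmodup[n,m]$ has $p_\alpha$ acyclic only for $\alpha$ in the first row or first column; applying $d_0$ (deleting the first row, say) makes the old second row the new first row, and at entries $(1,j)$ with $j>0$ the map $p$ was only required to be a fibration, not a weak equivalence. So $d_0$ need not preserve acyclic fibrations, and your ``first bookkeeping task'' cannot close this case. The paper isolates exactly this pair and instead checks the \emph{left} adjoint $\overline s_{-1}$ directly: inserting a row of initial objects leaves every pre-existing latching map unchanged (those objects that were in $\catc_0$ already had initial latching object), while the latching maps at the newly inserted objects are identities, so $\overline s_{-1}$ preserves (acyclic) cofibrations. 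All remaining right adjoints---$s_i$, $d_{i+1}$ for $0\le i\le n-1$, and $\overline s_n$---do preserve both classes entrywise, since they only repeat entries, omit entries \emph{other than} the first row/column, or append a final object. Your sketch lists both an evaluation check and a latching check without saying which pair needs which, and the sentence ``I expect this combinatorial verification to be the main obstacle'' lands exactly where the paper's argument does its actual work.

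\textbf{K-theory comparison.} There is no ``forgetting the choice of interior entries'' and no contractibility argument is needed. Thomason's bisimplicial set $N_*\overline{w}T_\bullet\catu$ already records the full grid: an $(n,m)$-cell is a length-$n$ chain of morphisms in $\overline{w}$ between objects of $T_m\catu$, which unpacks to an $(n{+}1)\times(m{+}1)$ array of objects of $\catu$ in which every relative latching map is a cofibration and those outside the first row and first column are in addition acyclic. That is exactly the description of a cofibrant object of $\leftmodup[n,m]$ with entries in $\catu$, so the two bisimplicial sets are literally equal as subsets of $\mathbb N_*\mathbb N_\bullet\catm$ (this is Prop.~\ref{getT} in the paper). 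The remaining step, comparing $\overline{w}$ to Waldhausen's larger class $w$, is Lemma~\ref{wbart} and is handled by Quillen's Theorem~A exactly as in Waldhausen's original argument. Your ``homotopy pushout of the $\catc_0$-data'' picture and the accompanying contractibility claim misidentify what $T_\bullet$ records.
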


\begin{proof}
	First, observe that from the Cartesian closed property for $\cat$, and the fact that
	the indexing category $\catc_{n,m} \approx [n] \times [m]$, Lemma \ref{simpadjt}
	implies each face and degeneracy map of this bisimplicial category 
	in either direction is part of an adjoint pair as indicated in the statement.
	
	To show these are all strong Quillen pairs, first
	consider the case $(\overline s_{-1},d_0)$.
	In this case, the latching maps at previously existing objects remain unchanged, since
	for any object $\alpha$ formerly in $\catc_0$ (the first row or column)
	one has $\alofx{X}$ the initial object.  
	As all latching maps at the newly added $\catc_0$ objects are identities, it follows that
	$\overline s_{-1}$ preserves (acyclic) cofibrations.  Having handled the exceptional
	case of $(\overline s_{-1},d_0)$, it will now suffice to show each 
	right adjoint other than $d_0$, but including $\overline s_n$,
	(vertical or horizontal) preserves (acyclic) fibrations.
	
	Notice fibrations are defined entrywise for these monotone increasing
	indexing categories.  Hence, both omitting and repeating entries, or inserting the
	identity on final objects, will preserve fibrations.
	Similarly, repeating entries, inserting the identity on final objects, 
	or omitting entries other than the first row or first column 
	will preserve weak equivalences.  Since $d_0$ (omitting the first row or column)
	is excluded at this point, the result is that each right adjoint currently under
	consideration is a strong right Quillen functor, so each left adjoint is also a strong 
	left Quillen functor. 
	
	The second statement now follows from Prop \ref{getT} below, keeping in mind that
	Lemma \ref{simpadjt} together with the first statement implies the simplicial structure 
	maps all commute with $ev_{\catu}^{cof}$, which thereby yields a bisimplicial set.
\end{proof}

Notice that each simplicial structure map above must preserve all weak 
	equivalences as both strong left and strong right Quillen functors, since one can factor an 
	arbitrary weak equivalence as an acyclic cofibration followed by an acyclic fibration. 
	It also follows that the derived functors remain adjoints at the level of the homotopy
	categories, so this structure is fairly rigid.	

\begin{remark}
	In fact, the construction of a bisimplicial set above works just as well with $\catu$
	the full subcategory on any set of cofibrant objects, without
	assuming it is a Waldhausen subcategory.  No claims are made here about the 
	properties of such an extension, but it could provide some flexibility in working
	with algebraic K-theory slightly outside of Waldhausen's original context.
\end{remark}

\subsection{Models for algebraic K-theory}

Now the topic shifts to recovering the relation between the construction above
and Thomason's variant of Waldhausen's construction.  The complications of dealing
with Waldhausen's construction directly are avoided in this way,
although they are outlined in Rem. \ref{stillsdot}.  
For a Waldhausen subcategory $\catu$ of $\catm$, 
let $T_n \catu$ denote the full subcategory of $\mathbb N_n(\catu)$ whose objects have 
each $X_i \to X_{i+1}$ a cofibration (in $\catm$), or equivalently, 
the (standard) Reedy cofibrant objects as defined in $\mathbb N_n(\catm)$ whose 
entries all lie in $\catu$.
Here a morphism
of $T_n \catu$ is called a cofibration if it is a (standard) Reedy cofibration (considered in
$\mathbb N_n(\catm)$.  Then Thomason's notion of weak equivalence $w$ is the class of
maps with $\latch[i]{f}$ a weak equivalence for each $i>0$, so his acyclic cofibrations
$\overline{w}$ in this case are
precisely the cofibrations from $\leftmod$ where $\catc=[n]$ and $\catc_0=[0]$.  Just to be clear, 
$\overline{w}$ consists of natural transformations between functors $[n] \to \catm$ where
the zero entry is a cofibration and all higher latching maps are acyclic cofibrations in $\catm$.

One now needs to observe that Waldhausen's proof (using Quillen's Theorem A)
applies in this case as well to see
$\overline{w}$ is `big enough' to lead to the algebraic K-theory space.  

\begin{lemma}
	\label{wbart}
	There is a homotopy equivalence between the bisimplicial sets 
	$N_*\overline{w}T_\bullet \catu$ and $N_*{w}T_\bullet \catu$.  Thus,
	the former also yields a model for the algebraic K-theory space of $\catu$.
\end{lemma}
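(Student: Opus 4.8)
The plan is to prove Lemma \ref{wbart} one simplicial degree at a time: for each fixed $n$, show that the inclusion of categories $\overline{w}T_n\catu \hookrightarrow wT_n\catu$ (both have the same objects, the Reedy cofibrant $\catn$-diagrams with entries in $\catu$, but the inclusion is far from full on morphisms) induces a weak homotopy equivalence on nerves. By the realization lemma for bisimplicial sets -- a map of bisimplicial sets that is a weak equivalence in every fixed degree of one direction induces a weak equivalence on diagonals -- this assembles into the asserted homotopy equivalence $N_*\overline{w}T_\bullet\catu \to N_*wT_\bullet\catu$. Since $N_*wT_\bullet\catu$ is a model for the algebraic $K$-theory space of $\catu$ (Thomason's variant of Waldhausen's theorem, which is being taken as known), so is $N_*\overline{w}T_\bullet\catu$, giving the final sentence.

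For fixed $n$, I would invoke Quillen's Theorem A for the inclusion functor $\iota\colon \overline{w}T_n\catu \to wT_n\catu$: it suffices to show that for each object $Y$ of $wT_n\catu$ the comma category $\iota\downarrow Y$ has contractible nerve. Its objects are pairs $(X,f)$ with $f\colon X\to Y$ in $w$, and a morphism $(X,f)\to(X',f')$ is a morphism $g\colon X\to X'$ in $\overline{w}$ with $f'g=f$. The contractibility will be produced by a mapping cylinder functor. Using that $\catm$ is a model category (so mapping cylinders exist) and that $\catu$ is a Waldhausen subcategory, hence closed under the relevant homotopy pushouts, one builds a functor $T$ on the arrow category of $T_n\catu$ together with the front inclusion, the back inclusion, and the projection; the bookkeeping needed to keep the output a Reedy cofibrant diagram with entries in $\catu$ is exactly what the modified Reedy structure $\leftmod$ with $\catc=\catn$ and $\catc_0=\catn[0]$ of Thm \ref{modReedy} is designed to organize, since by construction its cofibrations are precisely the class $\overline{w}$.

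Granting that, the verification runs as follows. For $f\in w$ the front inclusion $X\to T(f)$ is a cofibration and, by two-out-of-three against the always weakly trivial projection $r_f\colon T(f)\to Y$, also a $w$-equivalence, hence lies in $\overline{w}$; likewise the back inclusion $Y\to T(f)$ lies in $\overline{w}$; and $r_f$ is a weak equivalence between Reedy cofibrant $\catn$-diagrams, so all its higher latching maps are weakly trivial and $r_f\in w$. Thus $(X,f)\mapsto (T(f),r_f)$ is an endofunctor of $\iota\downarrow Y$, and the front and back inclusions give natural transformations $\mathrm{id}\Rightarrow T\Leftarrow \mathrm{const}_{(Y,1_Y)}$, whence $N_*(\iota\downarrow Y)$ is contractible. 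Theorem A then yields that $N_*\overline{w}T_n\catu\to N_*wT_n\catu$ is a homotopy equivalence, and the passage to bisimplicial sets is formal.

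The main obstacle is the step I glossed over: realizing the mapping cylinder as an honest functor on arrows of $T_n\catu$ with output again Reedy cofibrant and with entries in $\catu$, and checking the three structure maps interact with $w$ and $\overline{w}$ as stated -- in particular the fact that an entrywise weak equivalence between Reedy cofibrant $\catn$-diagrams has all higher relative latching maps weakly trivial, so that $r_f\in w$. This is precisely where the closure properties of the Waldhausen subcategory $\catu$ and the latching-map analysis underlying $\leftmod$ and Thm \ref{bisimp} are used; once the cylinder functor is available, the Theorem A argument above is routine, and no Eilenberg-swindle issue arises since only finite pushouts enter the construction.
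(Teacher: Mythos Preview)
Your proposal is correct and takes essentially the same approach as the paper: the paper's proof simply cites Waldhausen's Lemma~1.6.3 and Quillen's Theorem~A, and you have spelled out precisely that argument (fix $n$, apply Theorem~A to $\iota\colon \overline{w}T_n\catu \hookrightarrow wT_n\catu$ via a mapping-cylinder contraction of the comma categories, then invoke the realization lemma). Your identification of the main technical point---verifying the cylinder functor lands in $T_n\catu$ and that $r_f\in w$ via the gluing lemma for cofibrant objects---is exactly what one must check to transport Waldhausen's abstract argument to this setting, and the paper leaves this implicit in its citation.
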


\begin{proof}
	The first statement follows the same proof as for Lemma 1.6.3 of \cite{Wald}, 
	using Quillen's Theorem A.  The second
	then follows from the end of section 1.3 of \cite{Wald}, where $wT_n \catu$ is discussed
	(although $T_n \catu$ is never made explicit).
\end{proof}

 Now one can show the model category approach yields an enrichment of Thomason's 
 approach.
 
\begin{prop}
	\label{getT}
	There is a homotopy equivalence between the bisimplicial sets
	$ev_{\catu}^{cof} \leftmodup[*,\bullet]$ and $N_*{w}T_\bullet \catu$, so the former
	yields a model for the algebraic K-theory space of $\catu$.
\end{prop}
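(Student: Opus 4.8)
The plan is to exhibit a natural isomorphism of bisimplicial sets $ev_{\catu}^{cof}\leftmodup[*,\bullet]\cong N_*\overline{w}T_\bullet\catu$ and then invoke Lemma \ref{wbart}, which provides a homotopy equivalence $N_*\overline{w}T_\bullet\catu\simeq N_*wT_\bullet\catu$ along with the identification of the target as a model for the algebraic K-theory space of $\catu$; composing the two yields the asserted equivalence, and hence also the final sentence of the statement. Throughout, the exponential law identifies a functor $[p]\times[n]\to\catm$ with a chain $X^{(0)}\to X^{(1)}\to\cdots\to X^{(p)}$ of natural transformations between functors $[n]\to\catm$. Using $\catc_{n,m}\approx[n]\times[m]$, under this identification the bisimplicial structure of Thm \ref{bisimp} in the first ($p$) direction is, by Lemma \ref{simpadjt}, the nerve of the chain ($d_i$ omits the stage $X^{(i)}$ and $s_i$ repeats it), while in the second ($n$) direction it is the simplicial structure of $T_\bullet$ (omitting or repeating an object within each chain of cofibrations).

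The heart of the argument is unwinding $\leftmodup[p,n]$-cofibrancy of a diagram $Z$ with entries in $\catu$. The grid $\catc_{p,n}=[p]\times[n]$ is monotone increasing with degree the sum of the indices, and $\catc_0$ is the union of its first row and first column, so $Z$ is cofibrant exactly when the relative latching map of $\emptyset\to Z$ at each object $\alpha$ is a cofibration in $\catm$, required acyclic when $\alpha\notin\catc_0$. The computation I would record first is that for the directed category $[n]$ the relative latching object $\lofx[i]{f}$ of a map $f:X^{(k-1)}\to X^{(k)}$ at $i>0$ is the pushout $X^{(k)}_{i-1}\sqcup_{X^{(k-1)}_{i-1}}X^{(k-1)}_i$, which is exactly the latching object of the big grid at the interior point $(k,i)$. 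Spelling out the four kinds of object, cofibrancy of $Z$ unwinds to: $X^{(0)}_0$ cofibrant (automatic, since $\catu$ consists of cofibrant objects); each $X^{(0)}_{i-1}\to X^{(0)}_i$ a cofibration, i.e.\ $X^{(0)}\in T_n\catu$ (the first row); each $X^{(k-1)}_0\to X^{(k)}_0$ a cofibration (the first column); and $\latch[i]{f}$ an acyclic cofibration for $f:X^{(k-1)}\to X^{(k)}$ and $i>0$ (the interior). By the description of $\overline{w}$ recalled just before Lemma \ref{wbart}, the last two conditions say precisely that each $X^{(k-1)}\to X^{(k)}$ is a Thomason acyclic cofibration. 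A short induction on $k$, and inside it on $i$, using closure of cofibrations under cobase change and composition applied to the interior pushout squares and the first-column cofibrations, then shows every structure map $X^{(k)}_{i-1}\to X^{(k)}_i$ is a cofibration, so all of $X^{(0)},\dots,X^{(p)}$ lie in $T_n\catu$. Hence a $\leftmodup[p,n]$-cofibrant diagram with entries in $\catu$ is the same as a $p$-fold composable chain of $\overline{w}$-maps of objects of $T_n\catu$, i.e.\ a $p$-simplex of $N_*\overline{w}T_n\catu$; conversely every such chain manifestly meets the latching conditions. The correspondence is compatible with the bisimplicial face and degeneracy maps described above, giving the required isomorphism.

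I expect the second paragraph to be the main obstacle --- making the dictionary between $\leftmodup[p,n]$-cofibrancy and Thomason's classes exact. The delicate points are that the interior latching objects of the grid match the \emph{relative} latching objects occurring in the definition of $\overline{w}$ (rather than the absolute ones), and the induction showing that cofibrancy propagates the $T_n\catu$ condition from the zeroth row to every row. The remaining bookkeeping --- compatibility with faces and degeneracies, and the passage from $\overline{w}$ to $w$, which is exactly Lemma \ref{wbart} --- is routine.
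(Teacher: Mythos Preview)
Your proposal is correct and follows essentially the same route as the paper: reduce via Lemma~\ref{wbart} to an identification with $N_*\overline{w}T_\bullet\catu$, then match the latching conditions defining cofibrancy in $\leftmodup[p,n]$ against the definition of $\overline{w}$, with a short induction propagating Reedy cofibrancy from the zeroth row to all rows. Your treatment is in fact more explicit than the paper's---you compute the interior latching objects of the grid and spell out the induction, whereas the paper packages the same content into a single displayed diagram and the phrase ``by induction up to $n$''---but the argument is the same.
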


\begin{proof}
	In light of Lemma \ref{wbart}, it suffices to show $ev_{\catu}^{cof} \leftmodup[n,m]$ is 
	isomorphic, as a bisimplicial set, to $N_*\overline{w}T_\bullet \catu$.  Since in all
	cases the bisimplicial structure maps are inherited from 
	$\mathbb N_* \mathbb N_\bullet \catm$, which contains both, it will suffice to
	observe that for a fixed $(n,m)$ the subsets coincide.
	
	Consider the diagrams of the following form (all squares indicated,
	even if distorted, are pushouts used to define the latching objects)
\diagramit{
X_{i0} \ar[r] \ar[d] & X_{i1} \ar[d] \ar[r] & X_{i2} \ar[d] \ar[r] & \dots  \ar[r] & X_{im} \ar[d] \\
X_{(i+1)0} \ar[r] & \mathcal L_{(i+1)1} \ar[d]^{\sim} & \mathcal L_{(i+1)2} \ar[d]^{\sim}
	& \dots   & \mathcal L_{(i+1)m} \ar[d]^{\sim} \\
& X_{(i+1)1} \ar[ur] & X_{(i+1)2} \ar[ur] & \dots \ar[ur] & X_{(i+1)m}
}
	where objects are all in $\catu$, all maps are cofibrations, and those labeled with 
	$\sim$ are also weak equivalences.  By definition of $\overline{w}$, this set serves 
	for $0 \leq i<n$ as the vertical map $i \to i+1$ of an entry of $N_n \overline{w}T_m \catu$.
	However, keeping in mind that for
	cofibrant objects in $\leftmodup[n,m]$
	the latching maps are all cofibrations and those whose target is outside 
	the first row and first column are acyclic cofibrations, this same form
	of diagram serves as any row of a
	cofibrant object of $\leftmodup[n,m]$ with entries in $\catu$.
	Hence by induction up to $n$, the sets in question coincide.
\end{proof}

\begin{remark}
	\label{stillsdot}
It is also possible to approximate Waldhausen's $S_\bullet$ construction directly, by
working with $\catc_0$ the top row and shifting $\leftmodup[n,m]$ to the entry $(n+1,m)$.
Of course, this runs into the usual difficulty of how to define $d_0^h$ for $S_\bullet$.  In fact,
in line with Lemma \ref{simpadjt}, one can view $d_0^h$ as simply a left adjoint to $s_0^h$ (with
some additional technical conditions, see \cite{myK}).  
As a consequence, Waldhausen's introduction of quotients
becomes quite natural, as one can see from the following smaller example in a categorical
nerve.  The key observation is that a commutative diagram of the form
\diagramit{
X_0 \ar[d] \ar[r] & X_1 \ar[d] \ar[r] & X_2 \ar[d] \\
{*} \ar[r] & Y_0 \ar[r] & Y_1
}
where the second row represents Waldhausen's $s_0 Y$, 
is equivalent to a commutative diagram
\diagramit{
X_1/X_0 \ar[d] \ar[r] & X_2/X_0 \ar[d] \\
Y_0 \ar[r] & Y_1 
}
so one chooses the top row of this second diagram as $d_0(X)$ to get the left adjoint property.
To justify this statement notice that the first commutative square in the top diagram is 
equivalent to the existence of the first vertical map in the bottom diagram.  
Then the large commutative 
rectangle in the top diagram is equivalent to the existence of the second vertical map in
the bottom diagram.  Finally, the commutativity of the second square in the top diagram
is now equivalent to commutativity of the bottom diagram, since both vertical maps in the
second square factor through the quotients and the top square in
\diagramit{
X_1 \ar[d] \ar[r] & X_2 \ar[d] \\
X_1/X_0 \ar[d] \ar[r] & X_2/X_0 \ar[d] \\
Y_0 \ar[r] & Y_1
}
commutes (is even a pushout) by the fact that $X_0 \to X_2$ factors through $X_0 \to X_1$. 

Of course, a left adjoint is only unique (some might say only defined)
up to natural isomorphism, so this would lead to something
not quite a simplicial category, where $d_0$ is only well-defined up to `homotopy'.
However, Waldhausen's requirement of keeping track of choices of quotients for $S_\bullet$
may be viewed as
a special case of an explicit rectification functor, which minimally modifies this construction
to produce an actual simplicial object in small categories or even in Waldhausen categories
with exact functors as morphisms.  
This small rectification construction is described concretely using descending sequences 
of objects together with certain choices of isomorphisms, in an inductive manner.  Of course,
one could instead appeal to modern technology for rectifying all kinds of pseudo-diagrams, 
but that would lose both the explicit nature of the construction and the historical context.  
See \cite{myK} for complete details. 
\end{remark}

\section{Examples of Applications to Localizations}

This brief, informal
section is mainly intended to discuss two very simple indexing categories and the
many model structures one can produce by the techniques of this article.

\subsection{Arrow categories}

Begin by considering $\catm(\to)$, so $\catc=\{0 \to 1\}=[1]$, for an arbitrary model category
$\catm$.  Notice one has the standard Reedy structure (with $\catc_0=\catc$), 
and $\catc$ is monotone increasing
so any full subcategory is left acceptable.  Thus, one also has $\leftmod$ with $\catc_0$ either 
${0}$ or ${1}$.  Finally, $\fnc[0]$ is always acceptable, so one has $\rightmod$ with 
$\catc_0={0}$ as well.  The last is the only one of these which is unusual in most cases,
while both $\leftmod$ and $\rightmod$ for $\catc_0={0}$ are Quillen equivalent to the original
$\catm$ by Rem. \ref{zero}.  If $\catm$ happens to be cofibrantly generated, one might 
be tempted to add to this list the 
modified projective structures, but they are already here by Rem. \ref{sameincrease}. 

Changing the choice of $\catc_0$ has not been considered until now, but the following
observation makes it tractable for certain comparisons.

\begin{lemma}
	\label{changesub}
	If $\catc_1 \subset \catc_0$, then the identity forms a strong right Quillen functor
	$\leftmod \to \leftmodc{1}$ (when both exist).  Dually, the identity forms a strong left Quillen
	functor $\rightmod \to \rightmodc{1}$ (when both exist).
\end{lemma}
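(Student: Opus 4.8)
The plan is to use the fact that, in a left modified Reedy structure, the fibrations and the underlying adjunction do not depend on the choice of subcategory; only the weak equivalences (and hence the acyclic cofibrations) do. Since the identity functor is its own left and right adjoint, the pair $(\mathrm{id},\mathrm{id})$ is automatically an adjoint pair between $\leftmodc{1}$ and $\leftmod$, so it will suffice to show that the identity, viewed as a functor $\leftmod \to \leftmodc{1}$, preserves fibrations and acyclic fibrations.

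First I would observe that the fibrations of $\leftmod$ and of $\leftmodc{1}$ literally coincide: by definition a map $f$ is a fibration in either structure exactly when $\match{f}$ is a fibration in $\catm_\alpha$ for every $\alpha \in \catc$, and this makes no reference to the chosen subcategory. Next, because $\catc_1 \subseteq \catc_0$, any weak equivalence of $\leftmod$ (a map $f$ with $f_\alpha$ a weak equivalence in $\catm_\alpha$ for all $\alpha \in \catc_0$) is in particular a weak equivalence of $\leftmodc{1}$ (the same requirement, but only for $\alpha \in \catc_1$). Combining these, an acyclic fibration of $\leftmod$ is a fibration that is also a weak equivalence of $\leftmodc{1}$, hence an acyclic fibration there; so the identity $\leftmod \to \leftmodc{1}$ is a strong right Quillen functor. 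One could equivalently invoke Lemma \ref{acycfibs} to characterize the acyclic fibrations of each structure by $\match{f}$ a fibration in $\catm_\alpha$ for all $\alpha$, acyclic when $\alpha$ lies in the relevant subcategory, and then use $\catc_1 \subseteq \catc_0$.

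The dual statement is entirely parallel, with the roles of latching and matching, and of cofibration and fibration, interchanged: in $\rightmod$ the cofibrations are defined by $\latch{f}$ a cofibration in $\catm_\alpha$ for every $\alpha$, again with no dependence on $\catc_0$, so the cofibrations of $\rightmod$ and $\rightmodc{1}$ agree, while $\catc_1 \subseteq \catc_0$ forces every weak equivalence of $\rightmod$ to be one of $\rightmodc{1}$. Hence every acyclic cofibration of $\rightmod$ is an acyclic cofibration of $\rightmodc{1}$, and the identity $\rightmod \to \rightmodc{1}$ preserves cofibrations and acyclic cofibrations, i.e.\ is a strong left Quillen functor. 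There is essentially no real obstacle here; the only points that need care are keeping straight which structure sits on which side of the adjunction and recording explicitly that the (co)fibrations in these modified structures are insensitive to the choice of $\catc_0$, which reduces everything to the obvious monotonicity of weak equivalences in the parameter $\catc_0$.
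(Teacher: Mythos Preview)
Your proposal is correct and follows essentially the same approach as the paper: the paper's proof simply observes that both left modified structures share the Reedy notion of fibration and that weak equivalences for $\catc_0$ are automatically weak equivalences for the smaller $\catc_1$, with the other case dual. Your version is just a more expanded rendition of this same two-line argument.
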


\begin{proof}
	Since $\leftmod$ and $\leftmodc{1}$ both use the Reedy notion of fibration, it suffices
	to observe that weak equivalences in the first are always weak equivalences in the second
	by definition.  The other case is dual.
\end{proof}

Thus, one has four distinct model structures on $\catm(\to)$, and there are five 
more if one assumes
there is also a localization $\catm_f$ around.  As above, one would get four more model structures
by only considering $\catm_f$ and the techniques of this article.  However, by combining
the two structures, there is also one more.  Let $\catm_0=\catm$ and 
$\catm_1=\catm_f$.  Then $\leftmodc{}$ (notice $\catc_0=\catc$ as for the 
standard Reedy structure) gives something here referred to as a
`mixed structure' which is a convenient place to
study the localization map.  For example, if $X \in \catm$ is fibrant, then the localization
map $X \to L_f X$ becomes a fibrant replacment in the mixed structure
for the identity $1:X \to X$.  The mixed structure also relates nicely with looking at 
$\leftmod$ for $\catm$ with $\catc_0={0}$ and
for $\catm_f$ with $\catc_0={1}$ by Lemma \ref{changesub} and Prop. \ref{restrict}.

In fact, the mixed structure is the only reasonable way to combine the two structures 
using the techniques of this article, since it coincides with the
associated $\projmod$ by $\catc$ monotone increasing, any choice of $\catc_0 \neq \catc$
is covered by the eight cases related to a single structure on $\catm$, 
and the reversed choice ($\catm_0=\catm_f$ and $\catm_1=\catm$) does not
seem to satisfy the compatibility condition.

\subsection{Commutative Squares}

One can also proceed as above for $\catc$ a commutative square 
\diagramit{
00 \ar[r] \ar[d] & 01 \ar[d] \\
10 \ar[r] & 11
}
as well, which is again clearly monotone increasing.  
However, a new variant is now available, if one has
two different localizations of $\catm$, say $\catm_f$ and $\catm_g$, together with the
`common localization' $\catm_h$.  It will not be necessary to be precise about what $\catm_h$ 
should mean or when it exists.  However, it should represent localization with respect to both
$f$ and $g$ at the same time, so the idea is that any cofibration which is acyclic in
either $\catm_f$ or $\catm_g$ should now be acyclic in $\catm_h$.  Now take 
$\catc_0=\catc$ (again like the standard Reedy structure) 
with $\catm_{00}=\catm$, $\catm_{01}=\catm_f$, $\catm_{10}=\catm_g$
and $\catm_{11}=\catm_h$ to form a `highly mixed structure'.  
In this case, one has a `localization square'
\diagramit{
X \ar[d] \ar[r] & L_f X \ar[d] \\
L_g X \ar[r] & L_h X
}
which arises as a fibrant replacement in the highly mixed structure
for the constant square on a fibrant object $X \in \catm$.  Perhaps the usual localization 
square for topological spaces, with the far corner the rationalization and one near corner
a $p$-localization could lead to some valuable insights via this highly mixed structure.
By Lemma \ref{changesub}, it also relates well with the two mixed structures associated 
to $\leftmod$ by picking $\catc_0=\{00 \to 01 \}$ or $\{ 00 \to 10 \}$ and these same choices
for $\catm_{ij}$.  Among other things, these are model structures on the category of 
squares which are closely related to just one of the two original localizations by 
Prop. \ref{restrict}.

It seems likely that further examples along these lines could be used to understand 
successive localizations in a highly structured form.  Perhaps the relationship between
the highly mixed structure for two localizations and the mixed structure for just one of them
could lead to an inductive framework for studying successive localizations.  One interesting
source of examples could be the smashing localizations of the stable homotopy category,
where localization squares already appear prominently.

\appendix
\section{The Reedy Inductive Framework}
\label{inductsec}

The goal of this appendix is to provide a self-contained introduction to the 
inductive framework available for diagrams indexed by a Reedy category, 
leading up to the proof of Prop. \ref{liftbylatch}.  This is provided since
 detailed proofs of these statements have often been omitted
in the literature, and the current claim is just a bit beyond the scope of previous claims.  
No model category machinery beyond the basic concepts of 
factoring a map and lifting in a commutative square are considered here, so these arguments
apply equally well to arbitrary weak factorization systems. 

The inductive framework is embodied by the next five technical lemmas,
which precede the proof of the proposition.

\begin{lemma}
	\label{inductobj}
	A diagram $X:\fnc \to \catm$ is equivalent to a restricted diagram $\widehat X:\fnc[n-1] \to \catm$
	together with a choice for each $\alpha \in \catc$ of degree $n$ of a factorization
\diagramit{
\alofx{X} \ar[r] \ar[dr] & X_\alpha \ar[d] \\
&  \amofx{X}
}
	of the canonical map.
\end{lemma}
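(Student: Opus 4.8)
The plan is to present the asserted equivalence as a pair of mutually inverse constructions. In one direction, given $X:\fnc \to \catm$ one simply restricts to $\widehat X = X|_{\fnc[n-1]}$. Whenever $|\alpha| = n$, every object of $\latchcat$ and of $\matchcat$ has degree strictly less than $n$, so the absolute latching object $\alofx{X}$ and absolute matching object $\amofx{X}$ are computed entirely from $\widehat X$; restricting $X$ to $\catc^{+}/\alpha$ (resp. $\alpha\backslash\catc^{-}$) and applying the universal property of the colimit (resp. limit) produces the absolute latching map $\alofx{X} \to X_\alpha$ and absolute matching map $X_\alpha \to \amofx{X}$, whose composite is exactly the canonical map. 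This records the required factorization at each degree-$n$ object.

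The content is the reverse construction. Given $\widehat X:\fnc[n-1] \to \catm$ and, for each $\alpha$ with $|\alpha|=n$, a factorization $\alofx{\widehat X} \to X_\alpha \to \amofx{\widehat X}$ of the canonical map, I would define $X:\fnc \to \catm$ to agree with $\widehat X$ on $\fnc[n-1]$ and to take the prescribed value $X_\alpha$ in degree $n$. A morphism $f$ of $\fnc$ either lies in $\fnc[n-1]$, in which case $X_f := \widehat X_f$; or else it touches a degree-$n$ object, and its unique Reedy factorization $f = g p$ (Definition \ref{basicreedy}) routes it first, through the $\catc^{-}$-part, down to an object of degree $<n$, and then, through the $\catc^{+}$-part, possibly back up to a degree-$n$ object. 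Accordingly one assembles $X_f$ out of three kinds of pieces: structure maps of $\widehat X$; for a $\catc^{-}$-morphism out of a degree-$n$ object $\alpha$, the matching map $X_\alpha \to \amofx{\widehat X}$ followed by the relevant limit projection; and for a $\catc^{+}$-morphism into a degree-$n$ object $\alpha$, the relevant colimit coprojection into $\alofx{\widehat X}$ followed by the latching map $\alofx{\widehat X} \to X_\alpha$. Uniqueness of the Reedy factorization makes this assignment well defined, and it plainly preserves identities.

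The main obstacle is functoriality of $X$. This unwinds into a finite case analysis on the degrees of the three objects occurring in a composition, and every case is one of: functoriality of $\widehat X$; compatibility of a colimit cocone with the structure maps of $\widehat X$ (for composable morphisms of $\catc^{+}$); compatibility of a limit cone (for composable morphisms of $\catc^{-}$); or the one genuinely new identity, that the latching map $\alofx{\widehat X} \to X_\alpha$ followed by the matching map $X_\alpha \to \amofx{\widehat X}$ equals the canonical map $\alofx{\widehat X} \to \amofx{\widehat X}$. But this last holds by the very hypothesis that the chosen data is a factorization of the canonical map, and, precomposing with a coprojection from $\delta$ and postcomposing with a projection to $\epsilon$, one recovers the structure map $\widehat X_{pg}$ from the componentwise description of the canonical map. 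With functoriality in hand, it is routine that the two constructions are mutually inverse: rebuilding from the restriction of a given $X$ returns $X$ itself, since functoriality of $X$ forces all its structure maps touching degree-$n$ objects to factor through the relevant (co)limits exactly as the rebuild prescribes; and restricting a rebuilt $X$ returns the original data by construction.
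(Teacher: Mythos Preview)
Your argument is correct and follows essentially the same route as the paper's proof: both use the unique Reedy factorization to define $X$ on morphisms touching degree $n$, and both identify the factorization hypothesis (latching map composed with matching map equals the canonical map) together with (co)cone compatibility for $\alofx{\widehat X}$ and $\amofx{\widehat X}$ as the ingredients that drive the composition check. The paper differs only stylistically, giving a single uniform formula $X_\alpha \to \amofx{X} \to X_\delta \to \alofx[\beta]{X} \to X_\beta$ for every non-identity morphism and then displaying the hardest case ($|\alpha|=|\beta|=|\gamma|=n$) as an explicit commutative diagram, whereas you split into cases and describe the verification more abstractly.
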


\begin{proof}
	The existence of $X$ clearly implies that of $\widehat X$ and the relevant factorizations
	are provided by the canonical morphisms.  Now suppose $\widehat X$ 
	and such factorizations
	have been chosen.  Since the value of $X$ on objects is thereby specified, it suffices to see
	how to define $X$ on all morphisms of $\fnc$, in a way which preserves compositions
	(and identities).  Suppose
	$\psi:\alpha \to \beta$ is a morphism of $\fnc$ which is not an identity.  
	By the Reedy condition,
	there exists a unique factorization $\alpha \to \delta \to \beta$ with $\alpha \to \delta$
	in $\catc^-$ and $\delta \to \beta$ in $\catc^+$, so
	define $X(\psi)$ by the composite
\[X_\alpha \to \amofx{X} \to X_\delta \to \alofx[\beta]{X} \to X_\beta  .
\]
	Here the map $\amofx{X} \to X_\delta$ is that of the limiting system defining the source, 
	and similarly for the target, so the diagram
\diagramit[\label{triangles}]{
X_\alpha \ar[dr] \ar[r] & \amofx{X} \ar[d] & \text{or} & \alofx[\beta]{X} \ar[r] & X_\beta\\
 & X_\delta && X_\delta \ar[ur] \ar[u]
}	
	commutes whenever $|\alpha| < n$ or $|\beta|<n$
	by the assumption that $\widehat X$ is an $\fnc[n-1]$-indexed diagram.
	
	In order to see the choice above is compatible with composition, 
	there are several cases and 
	full detail will only be given for the most complex, as the others are simpler variations
	on this argument.  Suppose 
	$\varphi: \beta \to \gamma$ which factors as $\beta \to \epsilon \to \gamma$,
	in addition to $\psi$ as above, with $|\alpha|=|\beta|=|\gamma|=n$.  
	Then composition closure of
	the monotone maps and the unique factorization
	in a Reedy structure implies the unique factorization of 
	$\delta \to \beta \to \epsilon$ as $\delta \to \sigma \to \epsilon$ leads to a commutative
	diagram in $\fnc$
\diagramit{
\alpha \ar[dr] && \beta \ar[dr] && \gamma \\
& \delta \ar[ur] \ar[dr] && \epsilon \ar[ur] \\
&& \sigma \ar[ur]  
}
	with $\alpha \to \sigma \to \gamma$ the unique decomposition of the composite.
	To see $X(\varphi \circ \psi)=X(\varphi)\circ X(\psi)$ consider the following commutative
	diagram
\diagramit{
X_\alpha \ar[r] & \amofx{X} \ar[dr] & \alofx[\beta]{X} \ar[r] & X_\beta \ar[r] &\amofx[\beta]{X} \ar[d] 
& \alofx[\gamma]{X} \ar[r] & X_\gamma \\
&& X_\delta \ar[u] \ar[dr] && X_\epsilon \ar[ur] \\
&&& X_\sigma \ar[ur]  
}	
	where the `low path' represents $X(\varphi \circ \psi)$ by the decomposition 
	statement above, while the `high path' represents $X(\varphi)\circ X(\psi)$.
	Commutativity of the middle pentagon above follows from the assumption 
	of a choice of such a factorization for each $\beta$ and the fact that
	the canonical map $\alofx[\beta]{X} \to \amofx[\beta]{X}$ will factor $X_\delta \to X_\epsilon$,
	since it is defined essentially as the combination of all such maps.	  
	
	In the other cases, one or more pieces of this large diagram can be simplified by 
	applying the commutative triangles indicated in \eqref{triangles}.  
	As one other example, the diagram when $|\alpha| <n$ and
	$|\beta|<n$ is the following.
\diagramit{
X_\alpha \ar[rr] \ar[dr] && X_\beta \ar[dr] && \alofx[\gamma]{X} \ar[r] & X_\gamma \\
& X_\delta \ar[ur] \ar[dr] && X_\epsilon \ar[ur] \\
&& X_\sigma \ar[ur]  
}
\end{proof}

\begin{lemma}
	\label{inductmap}
	Suppose $X$, $Y:\fnc \to \catm$.  A morphism $f:X \to Y$ in $\fnc$
	is equivalent to a restricted morphism
	$\widehat f:\widehat X \to \widehat Y$ of diagrams indexed on $\fnc[n-1]$
	together with a choice for each $\alpha \in \catc$ of degree $n$ of (the vertical arrows in)
	a commutative square
\diagramit[\label{mapsquare}]{
X_\alpha \ar[r]^{\blofx{f}} \ar[d]_{\match{f}} &  \lofx{f} \ar[d]^{\latch{f}} \\
\mofx{f} \ar[r]_{\bmofx{f}} & Y_\alpha   .
}
\end{lemma}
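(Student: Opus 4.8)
The plan is to mirror the proof of Lemma \ref{inductobj}, one categorical level up, by exhibiting a passage in each direction and checking they are mutually inverse. Given a morphism $f:X\to Y$ of $\fnc$-diagrams, restriction along $\fnc[n-1]\hookrightarrow\fnc$ yields $\widehat f:\widehat X\to\widehat Y$, and for each $\alpha$ of degree $n$ the component $f_\alpha:X_\alpha\to Y_\alpha$ is, by naturality of $f$ together with naturality of the units of adjunction defining the latching and matching objects, compatible with the absolute latching data (the first square, with top $\alofx{X}\to X_\alpha$) and with the absolute matching data. Now $\lofx{f}$ is the pushout of $\alofx{X}\to X_\alpha$ along $\alofx{f}$, and $\mofx{f}$ the pullback of $Y_\alpha\to\amofx{Y}$ along $\amofx{f}$; since the objects of $\latchcat$ and $\matchcat$ at a degree-$n$ object all have degree $<n$, these two objects, along with the canonical maps $\blofx{f}$ and $\bmofx{f}$, depend only on $\widehat X$, $\widehat Y$, $\widehat f$ and the given diagrams $X$, $Y$. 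Compatibility of $f_\alpha$ with the latching data is then precisely the datum of the arrow $\latch{f}:\lofx{f}\to Y_\alpha$ out of the pushout (with $\latch{f}\blofx{f}=f_\alpha$), and dually compatibility with matching is the datum of $\match{f}:X_\alpha\to\mofx{f}$ into the pullback (with $\bmofx{f}\match{f}=f_\alpha$), while commutativity of \eqref{mapsquare} records exactly that these two recoveries of $f_\alpha$ agree. This produces the forward assignment, and shows it is injective since a morphism of $\fnc$-diagrams is determined by $\widehat f$ and its degree-$n$ components.

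For the reverse passage, suppose $\widehat f:\widehat X\to\widehat Y$ and, for each $\alpha$ of degree $n$, a commutative square of the form \eqref{mapsquare} are given, where $\lofx{f},\mofx{f},\blofx{f},\bmofx{f}$ are built from $\widehat X,\widehat Y,\widehat f$ as above. I would set $f_\alpha:=\latch{f}\circ\blofx{f}=\bmofx{f}\circ\match{f}$ when $|\alpha|=n$ and $f_\alpha:=\widehat f_\alpha$ otherwise, and verify that these assemble into a natural transformation, i.e.\ that $f_\beta\circ X(\psi)=Y(\psi)\circ f_\alpha$ for every $\psi:\alpha\to\beta$ in $\fnc$. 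The identity case is trivial and the case $|\alpha|,|\beta|<n$ is naturality of $\widehat f$; otherwise $\psi$ is non-identity with an endpoint of degree $n$, so the Reedy factorization $\alpha\to\delta\to\beta$ has $|\delta|<n$, and by Lemma \ref{inductobj} applied to $X$ and to $Y$ the structure maps $X(\psi)$ and $Y(\psi)$ are the composites through $\amofx{X}\to X_\delta\to\alofx[\beta]{X}$ and its $Y$-analogue. The required square is then assembled from the commutativity of \eqref{mapsquare} at $\alpha$ and at $\beta$ (relating $f_\alpha,f_\beta$ to the absolute latching/matching legs), naturality of the canonical cone and cocone maps $\amofx{X}\to X_\delta$ and $\alofx[\beta]{X}\to X_\beta$ with respect to the $\fnc[n-1]$-morphism $\widehat f$, and compatibility of $f_\delta=\widehat f_\delta$ with those maps; as in Lemma \ref{inductobj} there are several cases according to which of $|\alpha|,|\beta|$ equal $n$, the smaller ones obtained by collapsing pieces of the largest diagram via the triangles \eqref{triangles}. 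Finally a round trip in either order recovers $\widehat f$ on the nose, recovers each degree-$n$ $f_\alpha$ as the common composite, and recovers the legs $\latch{f}$ and $\match{f}$ by the universal properties of $\lofx{f}$ and $\mofx{f}$, so the two passages are inverse.

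I expect the main obstacle to be the naturality check in the reverse passage, i.e.\ producing the commuting square for a general $\psi:\alpha\to\beta$. As with the composition-compatibility step in the proof of Lemma \ref{inductobj}, the only genuine content is bookkeeping: one must track how the factored descriptions of $X(\psi)$ and $Y(\psi)$ interact with the latching and matching halves of the chosen squares, and the pentagon-type subdiagram there (commuting because the canonical map $\alofx[\delta']{X}\to\amofx[\delta']{X}$ at an intermediate object is assembled from the cone and cocone legs) reappears in the form relevant here. Everything else — the forward assignment, its injectivity, and the inverse-pair verification — is formal.
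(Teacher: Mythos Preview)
Your proposal is correct and follows essentially the same strategy as the paper: the forward passage is declared straightforward, and for the reverse you define $f_\alpha$ as the common diagonal of the square and verify naturality by a case analysis on which endpoints of $\psi$ have degree $n$. The only cosmetic difference is that you route the naturality argument explicitly through the factored description of $X(\psi)$ and $Y(\psi)$ from Lemma~\ref{inductobj} (via the intermediate object $\delta$), whereas the paper argues more directly in each case: for $|\alpha|=n>|\beta|$ it uses the pullback property of $\mofx{f}$ together with naturality of the limit defining $\amofx{?}$, declares the case $|\alpha|<n=|\beta|$ dual, and reduces the case $|\alpha|=|\beta|=n$ to the previous two via the Reedy factorization through a lower-degree object. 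Your more uniform treatment through Lemma~\ref{inductobj} and the paper's case-by-case diagram chase amount to the same verification.
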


\begin{proof}
	One implication is clear as indicated by the labels on the given maps, 
	so assume the existence of $\widehat f$ and the indicated choices
	of the vertical arrows in the commutative squares as labeled.  Then define $f_\alpha$
	as either composite $X_\alpha \to Y_\alpha$, which agrees with the definition of 
	$\widehat f$
	by construction whenever $|\alpha|<n$.  In order to verify the transformation so
	defined is natural, suppose $\psi:\alpha \to \beta$ is a morphism in $\fnc$ with 
	$max\{|\alpha|,|\beta| \}=n$ (as otherwise $\psi \in \fnc[n-1]$). 	
	Begin with the case $|\alpha|=n>|\beta|$ and consider the diagram
\diagramit{
X_\alpha \ar[dd] \ar[rr]^{f_\alpha} \ar[dr]^{\match{f}} && Y_\alpha \ar[dd] \\
& \mofx{X} \ar[ur]^{\bmofx{f}} \ar[dl] \\
\amofx{X} \ar[d] \ar[rr]^{\amofx{f}} && \amofx{Y} \ar[d] \\
X_\beta \ar[rr]_{f_\beta} && Y_\beta
} 
	where the top square commutes since $\mofx{X}$ is the pullback of the 
	distorted square, while the bottom square commutes by naturality of the limit 
	defining $\amofx{X}$.  Thus, the outer rectangle shows this case of naturality
	of $f$ with respect to $\psi$.  The case where $|\alpha|<n=|\beta|$ is dual.   
	If both $\alpha$ and $\beta$ have degree $n$, then the unique
	factorization providing an intermediate object with lower degree
	(and the ability to compose commutative squares) reduces to the combination of the 
	two cases considered already.  
\end{proof}

\begin{defn}
	Suppose $g:X \to Y$ is a morphism of diagrams $\fnc \to \catm$ and 
	there exists a restricted factorization as $\fnc[n-1]$-indexed diagrams 
	$\widehat X \stackrel{\widehat f}{\to} \widehat Z \stackrel{\widehat p}{\to} \widehat Y$
	(so $\widehat Z$ is only a $\fnc[n-1]$-indexed diagram).  Then there is a
	canonical map $\lofx{f} \to \mofx{p}$ induced by the 
	universal properties of the pushout and pullback from the diagram
\diagramit{
\alofx{X} \ar[d] \ar[r]^{\alofx{f}} & \alofx{Z} \ar[d] \ar[r]^{\alofx{p}} & \alofx{Y} \ar[d] \\
X_\alpha \ar[r]^{\blofx{f}} \ar[d]_{\match{g}} & \lofx{f} \ar@{.>}[d] 
	\ar[r] & \lofx{g} \ar[d]^{\latch{g}} \\
\mofx{g} \ar[r] \ar[d] & \mofx{p} \ar[d] \ar[r]^{\bmofx{p}} & Y_\alpha \ar[d] \\
\amofx{X} \ar[r]^{\amofx{f}} & \amofx{Z} \ar[r]^{\amofx{p}} & \amofx{Y} 
}
	where the middle rectangle commutes by Lemma \ref{inductmap} and the fact
	that $g$ extends.
\end{defn}

\begin{lemma}
	\label{inductfactor}
	Suppose $g:X \to Y$ is a morphism of diagrams $\fnc \to \catm$.  A factorization
	of $g$ as $X \stackrel{f}{\to} Z \stackrel{p}{\to} Y$ in $\fnc$
	is equivalent to a restricted factorization as $\fnc[n-1]$-indexed diagrams 
	$\widehat X \stackrel{\widehat f}{\to} \widehat Z \stackrel{\widehat p}{\to} \widehat Y$
	(so $\widehat Z$ is only a $\fnc[n-1]$-indexed diagram)
	together with a choice, for each $\alpha$ in $\catc$
	of degree $n$, of a factorization of the canonical map
\diagramit{
\lofx{f} \ar[r]^-{\latch{f}} & Z_\alpha \ar[r]^-{\match{p}} & \mofx{p}   .
}
\end{lemma}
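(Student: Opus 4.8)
The plan is to obtain this as a direct consequence of Lemmas \ref{inductobj} and \ref{inductmap}. The key point is that a single choice of factorization $\lofx{f} \to Z_\alpha \to \mofx{p}$ of the canonical map, made for each $\alpha$ of degree $n$, already packages all of the data those two lemmas require: it pins down the object $Z_\alpha$ together with maps $\alofx{Z} \to Z_\alpha$ and $Z_\alpha \to \amofx{Z}$, and it supplies the comparison squares \eqref{mapsquare} for both $f$ and $p$ at $\alpha$.

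The forward implication is immediate: a genuine factorization $X \to Z \to Y$ in $\fnc$ restricts to one of $\fnc[n-1]$-indexed diagrams, and for each $\alpha$ of degree $n$ the canonical latching map $\latch{f} : \lofx{f} \to Z_\alpha$ followed by the canonical matching map $\match{p} : Z_\alpha \to \mofx{p}$ recovers the canonical map $\lofx{f} \to \mofx{p}$ of the preceding definition, by the very universal properties of pushout and pullback used to construct it.

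For the converse, suppose we are given $\widehat{f}$, $\widehat{p}$ with $\widehat{p}\widehat{f}$ the restriction of $g$, together with a factorization $\lofx{f} \xrightarrow{\latch{f}} Z_\alpha \xrightarrow{\match{p}} \mofx{p}$ for each $\alpha$ of degree $n$. First I would promote $\widehat{Z}$ to an $\fnc$-indexed diagram: take $Z_\alpha$ as given, use $\alofx{Z} \to \lofx{f} \xrightarrow{\latch{f}} Z_\alpha$ as its absolute latching map and $Z_\alpha \xrightarrow{\match{p}} \mofx{p} \to \amofx{Z}$ as its absolute matching map. To invoke Lemma \ref{inductobj} one checks these compose to the canonical map $\alofx{Z} \to \amofx{Z}$, which is a chase of the large diagram in the preceding definition (its middle rectangle commuting precisely because $\widehat{f}$ and $\widehat{p}$ are morphisms). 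With $Z$ now defined on $\fnc$, each of $\lofx{f}$, $\mofx{f}$, $\lofx{p}$, $\mofx{p}$ is determined, so I would form the square \eqref{mapsquare} for $f$ at $\alpha$ using the given $\latch{f}$ as right vertical map and the induced map $\match{f}$ into the pullback as left vertical map, and the square for $p$ at $\alpha$ using the given $\match{p}$ as left vertical map and the induced map $\latch{p}$ out of the pushout as right vertical map; both commutativities again follow from the same large figure. Lemma \ref{inductmap} then extends $\widehat{f}$ and $\widehat{p}$ to morphisms $f$, $p$ of $\fnc$-indexed diagrams, and a last chase using $\widehat{p}\widehat{f} = \widehat{g}$ gives $(pf)_\alpha = \bmofx{p} \circ (\match{p}\circ\latch{f}) \circ \blofx{f} = g_\alpha$ at the degree-$n$ objects, so $pf = g$. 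That the two passages are mutually inverse is built into Lemmas \ref{inductobj} and \ref{inductmap}.

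The main difficulty is purely bookkeeping: verifying that the chosen factorizations genuinely satisfy the compatibility hypotheses of Lemmas \ref{inductobj} and \ref{inductmap}, i.e. that the composite $\alofx{Z} \to Z_\alpha \to \amofx{Z}$ and the two comparison squares commute. Each such verification reduces to commutativity of a sub-diagram of the large figure preceding the statement together with the hypothesis that $\widehat{f}$ and $\widehat{p}$ factor the restriction of $g$; no new idea beyond those two lemmas is involved.
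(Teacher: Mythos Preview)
Your proposal is correct and follows the same route as the paper: reduce to Lemmas \ref{inductobj} and \ref{inductmap}, extend $Z$ first via the composite $\alofx{Z} \to \lofx{f} \to Z_\alpha \to \mofx{p} \to \amofx{Z}$, then induce $\match{f}$ (dually $\latch{p}$) from a pullback (pushout) universal property to obtain the comparison squares \eqref{mapsquare}. The paper carries out what you call ``bookkeeping'' by a pullback-pasting argument showing that $\mofx{f}$ is also the pullback of $\mofx{g} \to \mofx{p} \leftarrow Z_\alpha$, from which $\match{f}$ is then induced using $\match{g}$ and the chosen $\latch{f}$; this is the one step where your sketch is less explicit than the paper, but the plan is the same.
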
	

\begin{proof}
	Once again, one direction is straightforward, so assume the existence of the restricted
	factorization and the indicated choices.  Apply Lemma \ref{inductobj} using the fact
	that the assumption here provides $Z_\alpha$ with a choice of factorization of
	the canonical map $\alofx{Z} \to \lofx{f} \to \mofx{p} \to \amofx{Z}$ to see 
	the intermediate object $Z$
	so constructed actually forms an $\fnc$-indexed diagram.  To see the indicated map
	$\widehat f$ extends to $\fnc$, it suffices by Lemma \ref{inductmap} to show the
	chosen map $\latch{f}$ induces a choice of $\match{f}$ making a square as in
	\eqref{mapsquare} commute.  
	As the corresponding statement and its justification for $\widehat p$ is strictly dual,
	only this case will be considered.
	
		To keep track of the entire argument, 
	consider the following commutative diagram
\diagramit[\label{bigrect}]{
\alofx{X} \ar[d] \ar[r]^{\alofx{f}} \ar@/^6ex/[rr]^{\alofx{g}}& \alofx{Z} \ar[d] \ar[r]^{\alofx{p}} 
	& \alofx{Y} \ar[d] \\
X_\alpha  \ar@{}[dr] |{\bf 4} \ar@{.>}[d]^{\match{f}} \ar[r]^{\blofx{f}} \ar@/_4ex/[dd]_{\match{g}} 
	& \lofx{f} \ar[d]^{\latch{f}} \ar[r] & \lofx{g} \ar[d] \ar@/^4ex/[dd]^{\latch{g}} \\
\mofx{f} \ar[r]^{\bmofx{f}} \ar[d]  \ar@{}[dr] |{\bf 3}& Z_\alpha \ar[d]^{\match{p}} \ar[r]^{\blofx{p}} 
	& \lofx{p} \ar@{.>}[d]_{\latch{p}} \\
\mofx{g} \ar[r] \ar[d]  \ar@{}[dr] |{\bf 2}& \mofx{p} \ar[d] \ar[r]^{\bmofx{p}} \ar@{}[dr] |{\bf 1} 
	& Y_\alpha \ar[d] \\
\amofx{X} \ar[r]^{\amofx{f}} \ar@/_6ex/[rr]^{\amofx{g}} & \amofx{Z} \ar[r]^{\amofx{p}} & \amofx{Y} 
}
	where the bottom squares labeled $1$ through $3$ will be shown to be pullbacks, 
	the top three squares are similarly pushouts, 
	and the two tall rectangles of the middle 2 by 2 square commute by 
	$g$ extending to $\fnc$ and Lemma \ref{inductmap}, along with the construction
	of the canonical map.
	
	Now consider the rectangle along the bottom, so $1$ is a pullback and the composite
	of $1$ and $2$ is a pullback, which implies $2$ is a pullback.  Once again, the
	composite of $2$ and $3$ is a pullback, with $2$ just shown to be a pullback, so
	$3$ is also a pullback, and its universal property will be used to define the 
	dotted arrow $\match{f}$.  The assumption
	of having a factorization of the canonical map as
	$\latch{f} \match{p}$ implies, by the definition of the canonical map,
	that the two longest paths in the composite of $3$ and $4$ commute, which suffices
	to induce a map making $4$ commute as in \eqref{mapsquare}.
\end{proof}

And finally, one has the induction step for constructing lifts in commutative squares, which
is the point of the next two lemmas.  The first of these two lemmas 
builds the relevant structure, which is analyzed in the second.

\begin{lemma}		
	\label{onlysquare}
	Suppose
\diagramit{
A \ar[d]_{f} \ar[r]^{g} & X \ar[d]^{p} \\
B \ar[r]_{h} \ar@{.>}[ur]^{\widehat k} & Y
}
is a (solid) commutative square of $\fnc$-indexed diagrams in $\catm$.  Then a dotted lift
$\widehat k$ in the restriction to $\fnc[n-1]$-indexed diagrams induces a commutative square
\diagramit{
\lofx{f} \ar[d]_{\latch{f}} \ar[r] & X_\alpha \ar[d]^{\match{p}} \\
B_\alpha \ar[r]  & \mofx{p}
}
for each $\alpha$ of degree $n$ in $\catc$.
\end{lemma}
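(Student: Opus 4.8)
The plan is to build the horizontal edges of the claimed square out of the restricted lift $\widehat k$ and then check commutativity by playing the universal properties of the pushout $\lofx{f}$ and the pullback $\mofx{p}$ off against one another. First I would record the two elementary observations that make everything go: every object of $\latchcat$ and of $\matchcat$ has degree strictly less than $|\alpha|=n$, so $\widehat k$ (a map of $\fnc[n-1]$-diagrams) already induces maps $L_\alpha(\widehat k):\alofx{B}\to\alofx{X}$ and $M_\alpha(\widehat k):\amofx{B}\to\amofx{X}$, natural with respect to the absolute latching and matching maps; and since $\widehat k\widehat f=\widehat g$ and $\widehat p\widehat k=\widehat h$ hold on $\fnc[n-1]$, hence on $\latchcat$ and $\matchcat$, one gets the four identities $L_\alpha(\widehat k)\circ\alofx{f}=\alofx{g}$, $\alofx{p}\circ L_\alpha(\widehat k)=\alofx{h}$, $M_\alpha(\widehat k)\circ\amofx{f}=\amofx{g}$, and $\amofx{p}\circ M_\alpha(\widehat k)=\amofx{h}$.

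Next I would define the top and bottom maps. Recall $\lofx{f}$ is the pushout of $\alofx{B}\leftarrow\alofx{A}\to A_\alpha$; the two composites $A_\alpha\xrightarrow{g_\alpha}X_\alpha$ and $\alofx{B}\xrightarrow{L_\alpha(\widehat k)}\alofx{X}\to X_\alpha$ agree on $\alofx{A}$ — the first by naturality of the absolute latching map against the (full) $\fnc$-map $g$, the second because $L_\alpha(\widehat k)\circ\alofx{f}=\alofx{g}$ — so they induce the desired $\lofx{f}\to X_\alpha$, whose restriction along $\blofx{f}$ is $g_\alpha$ and along the other leg is $L_\alpha(\widehat k)$ followed by the absolute latching map of $X$. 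Dually, $\mofx{p}$ is the pullback of $\amofx{X}\to\amofx{Y}\leftarrow Y_\alpha$; the composites $B_\alpha\xrightarrow{h_\alpha}Y_\alpha\to\amofx{Y}$ and $B_\alpha\to\amofx{B}\xrightarrow{M_\alpha(\widehat k)}\amofx{X}\to\amofx{Y}$ coincide, inducing $B_\alpha\to\mofx{p}$, whose composite with the projection to $Y_\alpha$ is $h_\alpha$ and with the projection to $\amofx{X}$ is $M_\alpha(\widehat k)$ precomposed with the absolute matching map of $B$. The verticals $\latch{f}$ and $\match{p}$ are already part of the $\fnc$-diagram structure of $f$ and $p$ by Lemma \ref{inductmap}, and satisfy $f_\alpha=\latch{f}\circ\blofx{f}$ and $p_\alpha=\bmofx{p}\circ\match{p}$.

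Finally, to verify the square commutes, I would use that $\mofx{p}$ is a pullback and test both composites $\lofx{f}\to\mofx{p}$ against the two projections. Against $\mofx{p}\to Y_\alpha$ the left–bottom path is $h_\alpha\circ\latch{f}$ and the top–right path is $p_\alpha\circ(\lofx{f}\to X_\alpha)$; since $\lofx{f}$ is a pushout it suffices to compare on the leg $\blofx{f}$, where the identity collapses to $(pg)_\alpha=(hf)_\alpha$ together with $f_\alpha=\latch{f}\blofx{f}$, and on the leg $\alofx{B}\to\lofx{f}$, where it collapses to naturality of the absolute latching maps against $p$ and $h$ plus $\alofx{p}\circ L_\alpha(\widehat k)=\alofx{h}$. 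Against $\mofx{p}\to\amofx{X}$ the top–right path is the absolute matching map $X_\alpha\to\amofx{X}$ precomposed with $\lofx{f}\to X_\alpha$, the left–bottom path is $M_\alpha(\widehat k)$ precomposed with the absolute matching map of $B$ and $\latch{f}$, and a comparison on the same two pushout legs reduces the claim to naturality of the absolute matching maps, $M_\alpha(\widehat k)\circ\amofx{f}=\amofx{g}$, and the observation that the canonical map $\alofx{X}\to\amofx{X}$ is natural in $X$ and factors through the entry $X_\alpha$. The entire verification amounts to assembling one large commutative diagram — essentially the outer frame of the canonical-map diagram built just before Lemma \ref{inductfactor}, but produced from a lift rather than from a factorization — so the only genuine obstacle is bookkeeping: keeping straight which arrows are absolute versus relative latching/matching data and which cells commute by naturality as opposed to by the hypothesis that $\widehat k$ is a lift on $\fnc[n-1]$.
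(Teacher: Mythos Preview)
Your proposal is correct and follows essentially the same approach as the paper: both construct the horizontal maps from $L_\alpha(\widehat k)$ and $M_\alpha(\widehat k)$ via the universal properties of the pushout $\lofx{f}$ and the pullback $\mofx{p}$, and both rely on the identities coming from $\widehat k\widehat f=\widehat g$ and $\widehat p\widehat k=\widehat h$ on $\fnc[n-1]$. The paper compresses the commutativity check into a single large diagram with two dotted arrows $\alofx{B}\to X_\alpha$ and $B_\alpha\to\amofx{X}$, whereas you unpack that same verification explicitly by testing against the two pullback projections and the two pushout legs; the content is the same.
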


\begin{proof}
	By exploiting universal properties of the pushout and pullback involved, it suffices
	to construct (for each $\alpha$ of degree $n$) a commutative diagram 
	(including the dotted maps) as follows
\diagramit{
\alofx{A} \ar[d]_{\alofx{f}} \ar[r] & A_\alpha \ar[d]_{f_\alpha} \ar[r]^{g_\alpha} &
	X_\alpha \ar[d]^{p_\alpha} \ar[r] & \amofx{X} \ar[d]^{\amofx{p}} \\
\alofx{B} \ar[r] \ar@/_2ex/@{.>}[urr] & B_\alpha \ar@/_2ex/@{.>} [urr] \ar[r]_{h_\alpha} & 
	Y_\alpha \ar[r] & \amofx{Y} .
}
	However, the assumption that $\widehat k$ is a restricted lift implies the diagram
\diagramit{
\alofx{A} \ar[d]_{\alofx{f}} \ar[r]^{\alofx{g}} & \alofx{X} \ar[d]^{\alofx{p}} \ar[r] & 
	X_\alpha \ar[d]^{p_\alpha} \\
\alofx{B} \ar[ur]^{\alofx{\widehat k}} \ar[r]_{\alofx{h}} & \alofx{Y} \ar[r] & Y_\alpha
}
	commutes, so it suffices to define the dotted map $\alofx{B} \to X_\alpha$ as the composite
	$\alofx{B} \stackrel{\alofx{\widehat k}}{\longrightarrow} \alofx{X} \to X_\alpha$.
	The construction of $B_\alpha \to \amofx{X}$ is dual.	
\end{proof}

Now one can see how to build lifts inductively by choosing lifts in the squares of
Lemma \ref{onlysquare}.

\begin{lemma}
	\label{inductlift}
	Suppose
\diagramit{
A \ar[d]_{f} \ar[r]^{g} & X \ar[d]^{p} \\
B \ar[r]_{h} \ar@{.>}[ur]^{k} & Y
}
is a (solid) commutative square of $\fnc$-indexed diagrams in $\catm$.  Then a dotted lift
$k$ of $\fnc$-indexed diagrams is equivalent to a restricted lift $\widehat k$ of 
$\fnc[n-1]$-indexed diagrams together with a choice of lift in the commutative square
\diagramit{
\lofx{f} \ar[d]_{\latch{f}} \ar[r] & X_\alpha \ar[d]^{\match{p}} \\
B_\alpha \ar[r] \ar@{.>}[ur]^{k_\alpha} & \mofx{p}
}
of Lemma \ref{onlysquare}
for each $\alpha$ of degree $n$ in $\catc$.  
\end{lemma}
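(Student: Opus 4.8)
The plan is to recognize the square of Lemma \ref{onlysquare} as the exact bundling of four conditions on a single component map $k_\alpha\colon B_\alpha\to X_\alpha$, and then to combine this with the packaging of a morphism of $\fnc$-indexed diagrams supplied by Lemma \ref{inductmap}. By Lemma \ref{inductmap}, applied to the prospective morphism $k\colon B\to X$, giving $k$ extending $\widehat k$ amounts to giving, for each $\alpha$ of degree $n$, a map $k_\alpha\colon B_\alpha\to X_\alpha$ which respects the latching of $B$ --- meaning $k_\alpha$ restricted along $\alofx{B}\to B_\alpha$ agrees with the composite $\alofx{B}\to\alofx{X}\to X_\alpha$ built from the absolute latching map of $\widehat k$ --- and dually respects the matching of $X$; the first condition produces the induced map $\lofx{k}\to X_\alpha$ out of the relevant pushout, the second the induced map $B_\alpha\to\mofx{k}$ into the relevant pullback, and together they make the square \eqref{mapsquare} for $k$ commute. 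Such a $k$ is moreover a lift of the original square exactly when $\widehat k$ is already a restricted lift and, for each $\alpha$ of degree $n$, the triangles $k_\alpha f_\alpha=g_\alpha$ and $p_\alpha k_\alpha=h_\alpha$ commute, the triangles at lower degrees being recorded in $\widehat k$.

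Thus the whole lemma reduces to the following claim for a fixed $\alpha$ of degree $n$: a map $k_\alpha\colon B_\alpha\to X_\alpha$ satisfies those four conditions (respects the latching of $B$, respects the matching of $X$, and fills the two triangles) if and only if it is a diagonal filler for the square of Lemma \ref{onlysquare}. To prove this I would unwind how that square was built. Its upper edge $\lofx{f}\to X_\alpha$ is the map out of the pushout $\lofx{f}$ determined by $g_\alpha\colon A_\alpha\to X_\alpha$ and by $\alofx{B}\to\alofx{X}\to X_\alpha$, while $\latch{f}\colon\lofx{f}\to B_\alpha$ is glued from $\alofx{B}\to B_\alpha$ and $f_\alpha\colon A_\alpha\to B_\alpha$. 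Hence $k_\alpha\circ\latch{f}$ equals the upper edge precisely when the two maps out of $\lofx{f}$ agree after restriction along the two structure maps of the pushout, $\alofx{B}\to\lofx{f}$ and $\blofx{f}\colon A_\alpha\to\lofx{f}$: the first restriction expresses that $k_\alpha$ respects the latching of $B$, and the second expresses $k_\alpha f_\alpha=g_\alpha$. Dually, using that $\mofx{p}$ is the pullback with structure maps $\bmofx{p}\colon\mofx{p}\to Y_\alpha$ and $\mofx{p}\to\amofx{X}$, the equation expressing that $\match{p}\circ k_\alpha$ equals the lower edge $B_\alpha\to\mofx{p}$ splits into $p_\alpha k_\alpha=h_\alpha$ together with $k_\alpha$ respecting the matching of $X$. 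This yields the claim, hence the backward direction of the lemma; the forward direction, beginning from a lift $k$ on $\fnc$, is read off from the same identifications by restricting $k$ to $\widehat k$ and checking that each $k_\alpha$ fills the Lemma \ref{onlysquare} square.

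The substantive content is just the two pushout/pullback verifications of the second paragraph; the rest is bookkeeping, and the only point requiring attention there is keeping straight the three latching/matching objects involved --- $\lofx{f}$ is the latching object of $f\colon A\to B$ and $\mofx{p}$ that of $p\colon X\to Y$, whereas Lemma \ref{inductmap} applied to $k$ refers instead to $\lofx{k}$ and $\mofx{k}$ --- so that ``$k_\alpha$ respects the latching of $B$'' is simultaneously the input for Lemma \ref{inductmap} (producing $\lofx{k}\to X_\alpha$) and the hypothesis governing the upper edge of the Lemma \ref{onlysquare} square (through the presentation of $\lofx{f}$ as a pushout), and dually on the matching side. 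I expect no genuine obstacle beyond this: the lemma is precisely the observation that, under the dictionary of Lemma \ref{inductmap}, a diagonal in the square of Lemma \ref{onlysquare} is the same data as an extension of $\widehat k$ to a lift on $\fnc$.
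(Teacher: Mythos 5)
Your argument is correct, and it reaches the same computational core as the paper --- namely, that a map $k_\alpha$ fills the square of Lemma \ref{onlysquare} if and only if it satisfies the two entrywise triangle identities together with compatibility with $\alofx{B}$ and with $\amofx{X}$, which is exactly what the pushout presentation of $\lofx{f}$ and the pullback presentation of $\mofx{p}$ give when you decompose the two edges of that square. Where you diverge is in the packaging lemma: you feed the resulting data directly into Lemma \ref{inductmap} applied to the prospective morphism $k\colon B\to X$, verifying both the latching and matching compatibilities symmetrically, whereas the paper instead views $kf$ as a factorization of $g$ and invokes Lemma \ref{inductfactor}; since $\latch{f}$ is already given there, the paper only has to manufacture $\match{k}\colon B_\alpha\to\mofx{k}$ by one pullback argument, checking the entrywise triangles separately in a preliminary diagram. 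Your route is slightly more self-contained and makes the ``diagonal filler $=$ four conditions'' dictionary explicit, at the cost of re-deriving what Lemma \ref{inductfactor} already encapsulates; the paper's route reuses that lemma but leaves more of the bookkeeping implicit. One small point of care (which you flag, appropriately): in reading Lemma \ref{inductmap} as the statement that an extension of $\widehat k$ is a single map $k_\alpha$ compatible with the canonical maps $\alofx{B}\to B_\alpha$ and $X_\alpha\to\amofx{X}$, you are using that the chosen vertical arrows $\latch{k}$ and $\match{k}$ are required to restrict to, respectively project to, the canonical absolute latching and matching maps; this is how the paper intends that lemma, so no gap results.
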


\begin{proof}
	Once again, one direction is straightforward, so assume 
	the indicated choices of $k_\alpha$ are given and try to verify the extended lift exists.
	Since the indicated choices yield a lift at each entry by considering
\diagramit{
A_\alpha \ar[rr] \ar[d] && X_\alpha \ar[d]^{\match{p}} \\
\lofx{f} \ar[d]_{\latch{f}} \ar[urr] && \mofx{p} \ar[d] \\ 
B_\alpha \ar[rr] \ar[urr] \ar@{.>}[uurr]_{k_\alpha} && Y_\alpha  ,
}	
	it suffices to use Lemma \ref{inductfactor} to show $k f$ would yield a
	factorization of $g$ as $\fnc$-indexed diagrams, hence that $k$ is itself
	a morphism of $\fnc$-indexed diagrams.  Thus it remains to verify that
	the choice of $k_\alpha$ as indicated leads to a choice of factorization of the
	canonical map as follows
	$\lofx{f} \stackrel{\latch{f}}{\longrightarrow} B_\alpha 
	\stackrel{\match{k}}{\longrightarrow} \mofx{k}$, with the first map already defined. 
	
	However, 	notice the lift assumption implies 
\diagramit{
 & X_\alpha \ar[d]^{\match{p}} \ar[dr] \\
 B_\alpha \ar[ur]^{k_\alpha} \ar[r] & \mofx{p} \ar[r] & \amofx{X}
}
	commutes, which together with the construction of
	$B_\alpha \to \amofx{X}$ in the proof of Lemma \ref{onlysquare}, 
	implies the (solid) diagram
\diagramit{
\lofx{f} \ar[r]_{\latch{f}} & 
	B_\alpha \ar@{.>}[dr]_{\match{k}} \ar@/^2ex/[drr]^{k_\alpha} \ar@/_4ex/[ddr] \\
&& \mofx{k} \ar[d] \ar[r]_{\bmofx{k}} & X_\alpha \ar[d] \\
&& \amofx{B} \ar[r]_{\amofx{k}} & \amofx{X}
}
	also commutes.  This yields the indicated dotted arrow by the pullback property,
	and so the required factorization of the canonical map.
\end{proof}

The main point of this appendix has been to build up to the proof of Prop. \ref{liftbylatch} which
follows.  

\begin{proof}[Proof of Proposition \ref{liftbylatch}]
	Suppose 
\diagramit{
A \ar[d]_{f} \ar[r]^{g} & X \ar[d]^{p} \\
B \ar[r]_{h} \ar@{.>}[ur]^{k} & Y
}	
	a (solid) commutative square in $\catm^\catc$; proceed by induction to produce the 
	required lift $k$.  Recall that when $\alpha$ has degree zero, $\fnc[0]$ is a discrete 
	category, while $\latch{f} \approx f_\alpha$ and $\match{p} \approx p_\alpha$.  Thus,
	one can choose $k_0$, a lift in the restriction to $\fnc[0]$, through a choice of $k_\alpha$
	in each such (independent) square which is possible by assumption.  Now apply
	Lemma \ref{inductlift} to choose $k_n$ given $k_{n-1}$, by making
	choices of lifts for each $k_\alpha$ with $|\alpha|=n$, which is 
	again possible by assumption.
\end{proof}

\end{document}